\newcommand{\Pscr}{\mathscr{P}}
\newcommand{\flouc}[1]{\overset{{#1}}{\righttoleftarrow}}
\newcommand{\field}[1]{\mathbb{#1}}
\newcommand{\A}{\field{A}}
\newcommand{\C}{\field{C}}
\newcommand{\N}{\field{N}}
\newcommand{\Q}{\field{Q}}
\newcommand{\Z}{\field{Z}}
\newcommand{\pgoth}{{\ensuremath{\mathfrak{p}}}}
\newcommand{\qgoth}{{\ensuremath{\mathfrak{q}}}}
\newcommand{\bP}{\mathbf{P}}
\newcommand{\Div}{	\operatorname{\text{\rm Div}}}
\renewcommand{\div}{	\operatorname{\text{\rm div}}}
\newcommand{\Spec}{	\operatorname{\text{\rm Spec}}}
\newcommand{\trdeg}{\operatorname{{\rm tr.deg}}}
\newcommand{\Frac}{\operatorname{{\rm frac}}}
\newcommand{\Cl}{\operatorname{{\rm Cl}}}
\newcommand{\lcm}{\operatorname{{\rm lcm}}}
\newcommand{\setspec}[2]{\big\{\,#1\, \mid \,#2\, \big\}}
\renewcommand{\emptyset}{\varnothing}
\newcommand{\rank}{\operatorname{{\rm rank}}}
\newcommand{\haut}{\operatorname{{\rm ht}}}
\newcommand{\Jeul}{\EuScript{J}}
\newenvironment{enumerata}%
{\begin{enumerate}[label=\rm(\alph*), ref=\rm\alph*]}
{\end{enumerate}}
\newcommand{\Pgoth}{{\ensuremath{\mathfrak{P}}}}
\newcommand{\Integ}{\ensuremath{\mathbb{Z}}}
\newcommand{\Qgoth}{{\ensuremath{\mathfrak{Q}}}}
\newcommand{\isom}{\cong}
\theoremstyle{plain}
\newtheorem{theorem}{Theorem}[section]
\newtheorem{proposition}[theorem]{Proposition}
\newtheorem{lemma}[theorem]{Lemma}
\newtheorem{corollary}[theorem]{Corollary}
\theoremstyle{definition}
\newtheorem{example}[theorem]{Example}
\newtheorem{remark}[theorem]{Remark}
\newtheorem{notation}[theorem]{Notation}
\newtheorem{subnothing*}[sub]{}
\theoremstyle{remark}
\begin{document}

\makeatletter	   
\makeatother     


\title{Generalizations of Samuel's Criteria \\ for a Ring to be a Unique Factorization Domain}
\author{Daniel Daigle \and Gene Freudenburg \and Takanori Nagamine}
\date{\today} 
\subjclass[2010]{14R05, 13A02, 13C20} 
\keywords{Krull domain, divisor class group, unique factorization domain, non-noetherian ring}
\thanks{The work of the first author was supported by grant 04539/RGPIN/2015 from NSERC Canada.}
\thanks{The work of the third author was supported by JSPS Overseas Challenge Program for Young Researchers (No. 201880243) and JSPS KAKENHI Grant Numbers JP18J10420 and JP20K22317.}

\pagestyle{plain}  

\begin{abstract} We give several criteria for a ring to be a UFD, including generalizations of some criteria due to P.\,Samuel. 
These criteria are applied to construct, for any field $k$, (1) a $\Z$-graded non-noetherian rational UFD of dimension 3 over $k$, and (2) $k$-affine rational UFDs defined by trinomial relations. 
\end{abstract}
\maketitle

\section{Introduction} Let $A$ be a unique factorization domain (UFD). This paper considers ring extensions of the following two types. 
\begin{itemize}
\item [(i)] $A[x]$ where $ax=b$ for relatively prime $a,b\in A\setminus\{ 0\}$ such that $aA$ and $aA+bA$ are prime ideals.
\item [(ii)] $A[x]$ where $A$ has a $\Z$-grading and $x^n=F$ for some positive integer $n$ and homogeneous prime $F\in A$ with degree relatively prime to $n$.
\end{itemize}
In his 1964 treatise on UFDs, Samuel \cite{Samuel.64} studied each of these kinds of extension. 

In case (i) Samuel demonstrated that, if $A$ is noetherian, then $A[x]$ is a UFD (\cite{Samuel.64}, Proposition\,7.6). 
We show that the noetherian condition can be weakened to a local condition, namely, if the hypotheses of (i) are satisfied and $\bigcap_{i\ge 1}(aA+bA)^i=(0)$, then $A[x]$ is a UFD
({\it Theorem\,\ref{First-Criterion}}). 

In case (ii) Samuel considered rings of the form $A=R[X_1,\hdots, X_n]$, a polynomial ring over a UFD $R$, where $A$ is $\Z$-graded by positive weights over $R$. He showed that, if either $n$ is congruent to 1 modulo $\deg F$, or every finitely generated projective $R$-module is free, then $A[x]$ is a UFD (\cite{Samuel.64}, Theorem\,8.1). We show, more generally, that $A[x]$ is a UFD whenever the conditions of (ii) hold ({\it Theorem\,\ref{Third-Criterion}}). 

After some preliminaries, {\it Section\,\ref{UFD}} gives these and other new criteria for a ring to be a UFD, and applies them
to certain families of affine algebras over a field $k$ which motivated our work to generalize Samuel's criteria in the first place. 
In {\it Section~\ref{examples}}, we construct a $\Z$-graded non-noetherian UFD over $k$ with Krull dimension 3 and quotient field $k^{(3)}$, the field of rational functions in three variables over $k$. 
This example is similar to one given in \cite{David.73}, but the existence of a $\Z$-grading allows for a much simpler proof.
{\it Section\,\ref{app-two}} introduces a family of $k$-affine rings defined by trinomial relations. {\it Theorem\,\ref{trinomial}} shows that the rings in this family are UFDs. Such rings appear in the classification of affine factorial $k$-varieties which admit a torus action of complexity one. For an algebraically closed field $k$ of characteristic zero, these rings were studied in 
\cite{Hausen.Herppich.Suss.11}.

The Bourbaki volume \cite{Bourbaki.72} claims that $A[x]$ is a UFD whenever the conditions of (i) above hold. 
However, this assertion is wrong. We construct a counterexample in {\it Section 6}. 


\section{Preliminaries}\label{preliminaries}

All rings are commutative with unity and any domain is understood to be an integral domain. For the ring $A$, $A^*$ is the group of units of $A$, 
$\dim A$ is the Krull dimension of $A$ and, for an ideal $I\subset A$, ${\rm ht}(I)$ is the height of $I$. If $f\in A$ then $A_f=S^{-1}A$ 
where $S=\{ f^n\, |\, n\in\N\}$.
For the integer $n\ge 0$, $A^{[n]}$ is the polynomial ring in $n$ variables over $A$, and $A^{[\pm n]}$ is the ring of Laurent polynomials in $n$ variables over $A$. 
When $A$ is an integral domain, ${\rm frac}(A)$ is its field of fractions, and if $A\subset B$ are integral domains, then ${\rm tr.deg}_AB$ is the transcendence degree of ${\rm frac}(B)$ over ${\rm frac}(A)$. For the field $K$, $K^{(n)}$ denotes the field of fractions of the polynomial ring $K^{[n]}$. 

\subsection{Krull Domains and Divisor Class Groups}  \label {c09i203sdnao99}
Given an integral domain $A$, we write $\bP(A)$ for the set of height $1$ prime ideals of $A$.
Krull domains can be characterized as follows (\cite{Bourbaki.72}, Chap.~VII, \S~1, n$^\circ$ 7, Thm~4):

\begin{theorem} \label {coiv0w309gc}
A ring $A$ is a Krull domain if and only if it is an integral domain satisfying each of the following three conditions. 
\begin{enumerata}

\item For each $\pgoth \in \bP(A)$, $A_\pgoth$ is a DVR.

\item $A = \bigcap_{ \pgoth \in \bP(A) }{ A_\pgoth }$

\item For each $x \in A \setminus \{0\}$, $\setspec{ \pgoth \in \bP(A) }{ x \in \pgoth }$ is a finite set.

\end{enumerata}
\end{theorem}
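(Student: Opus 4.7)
The plan is to prove the equivalence using the standard Bourbaki definition of a Krull domain: an integral domain $A$ with fraction field $K$ is Krull if and only if there exists a family $(v_i)_{i\in I}$ of discrete valuations on $K$ such that $A = \{x \in K : v_i(x) \geq 0 \text{ for all } i\}$ and such that, for every $x \in K^*$, the set $\{i : v_i(x) \neq 0\}$ is finite.

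For the direction $(\Leftarrow)$, I would take $(A_\pgoth)_{\pgoth \in \bP(A)}$ itself as the witnessing family of valuation rings. Condition (a) says each $A_\pgoth$ is a DVR, producing a discrete valuation $v_\pgoth$ on $K$; condition (b) is exactly the intersection formula in the definition; and condition (c) gives the finite-support axiom, since writing $x = a/b \in K^*$ with $a, b \in A \setminus \{0\}$, only finitely many $\pgoth$ contain $a$ or $b$, and outside this finite set $v_\pgoth(x) = 0$.

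For the direction $(\Rightarrow)$, the approach is to identify $\bP(A)$ with the centers of the defining valuations. For each $i \in I$, set $\pgoth_i := \{a \in A : v_i(a) > 0\}$; this is a prime ideal of $A$, and the key structural claim is that $\pgoth_i$ has height one with $A_{\pgoth_i} = \mathcal{O}_{v_i}$. Granting this claim, condition (a) is immediate. For the converse, given $\pgoth \in \bP(A)$ and $0 \neq x \in \pgoth$, the finite-support axiom leaves only finitely many indices $i$ with $v_i(x) > 0$; the intersection formula then forces at least one $\pgoth_i$ to be contained in $\pgoth$, and height considerations give $\pgoth = \pgoth_i$. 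This identifies the two indexing sets, after which (b) follows from the intersection axiom and (c) follows from the finite-support axiom.

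The main obstacle is the structural claim $A_{\pgoth_i} = \mathcal{O}_{v_i}$. The inclusion $A_{\pgoth_i} \subseteq \mathcal{O}_{v_i}$ is formal, since elements of $A \setminus \pgoth_i$ have $v_i$-value zero and are therefore units in $\mathcal{O}_{v_i}$. The reverse inclusion is the delicate part: for $x \in \mathcal{O}_{v_i}$ one must produce $b \in A \setminus \pgoth_i$ with $bx \in A$, which is accomplished by exploiting the finite-support axiom to select $b$ with sufficiently large positive $v_j$-values at the finitely many $j \neq i$ where $v_j(x) < 0$, while keeping $v_i(b) = 0$. This separation argument is the only genuinely subtle step; once it is in place, the height-one property of $\pgoth_i$ and the rest of the theorem follow formally from the two defining Krull axioms.
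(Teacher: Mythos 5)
The paper gives no proof of this theorem---it is quoted from Bourbaki (Chap.~VII, \S\,1, n$^\circ$\,7, Thm~4)---so your argument has to stand on its own. Your $(\Leftarrow)$ direction is correct and standard: conditions (a)--(c) say exactly that the family $(v_\pgoth)_{\pgoth\in\bP(A)}$ satisfies the two Bourbaki axioms. The $(\Rightarrow)$ direction, however, contains a genuine gap: the ``key structural claim,'' that \emph{every} member $v_i$ of a defining family has height-one center $\pgoth_i$ with $A_{\pgoth_i}=\mathcal{O}_{v_i}$, is false, because a defining family need not consist only of essential valuations. Concretely, let $A=k[x,y]$ and adjoin to the family $(v_\pgoth)_{\pgoth\in\bP(A)}$ the order-of-vanishing valuation $v_0$ at the origin (so $v_0(f)$ is the least total degree of a monomial occurring in $f$). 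The enlarged family still satisfies both axioms---adding one valuation that is nonnegative on $A$ changes neither the intersection nor the finite-character property---yet the center of $v_0$ is $\mathfrak{m}=(x,y)$, which has height two, and $A_{\mathfrak{m}}\subsetneq\mathcal{O}_{v_0}$ because $x/y\in\mathcal{O}_{v_0}\setminus A_{\mathfrak{m}}$.

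Your separation argument breaks down on precisely this example: for $\xi=x/y$ the only valuation in the family that is negative on $\xi$ is $v_{(y)}$, and any $b\in A$ with $v_{(y)}(b)>0$ is divisible by $y$ and hence satisfies $v_0(b)>0$, so there is no $b$ with $v_0(b)=0$ and $b\xi\in A$. The actual content of $(\Rightarrow)$ is to \emph{construct} the essential valuations from the two axioms rather than to show that each given $v_i$ is one. The standard route is: prove that a localization of a Krull domain is again Krull; prove that if $\pgoth$ is a minimal prime of a nonzero principal ideal then $A_\pgoth$ is a one-dimensional local Krull domain and that such a ring is a DVR (this gives (a), and identifies the height-one primes as the minimal primes of principal ideals); deduce (c) because the minimal primes of $xA$ lie among the centers of the finitely many $v_i$ positive on $x$; and obtain (b) by showing that for $x\in\Frac(A)\setminus A$ the proper ideal $(A:_A x)$ is contained in some such $\pgoth$, whence $x\notin A_\pgoth$. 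Your sketch replaces these steps with an assertion that an arbitrary defining family simply does not satisfy.
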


Let $A$ be a Krull domain, $K = \Frac(A)$, 
and $\Div(A)$ the free abelian group on the set $\bP(A)$.
Elements of $\Div(A)$ are formal sums $\sum_{\pgoth \in \bP(A)} n_\pgoth \pgoth$ with $n_\pgoth \in \Z$ for all
$\pgoth \in \bP(A)$ and $n_\pgoth = 0$ for almost all $\pgoth$.
For each $\pgoth \in \bP(A)$, $A_\pgoth$ is a discrete valuation ring and we denote by $v_\pgoth : K^* \to \Z$
the corresponding normalized valuation (i.e., the valuation with $v_\pgoth( K^* ) = \Z$).
Given $x \in K^*$, the set $\setspec{ \pgoth \in \bP(A) }{ v_\pgoth(x) \neq 0 }$ is finite by {\it Theorem~\ref{coiv0w309gc}(c)};
so
$$
\textstyle
\div_A : K^* \to \Div(A), \quad \div_A(x) = \sum_{\pgoth \in \bP(A)} v_\pgoth(x) \pgoth \ \ (x \in K^*)
$$
defines a group homomorphism.
The elements of $\Div(A)$ are called the {\bf divisors} of $A$ and those of 
$\text{\rm Prin}(A) = \setspec{ \div_A(x) }{ x \in K^* }$ 
are called the {\bf principal divisors} of $A$.
The quotient group $\Cl(A) = \Div(A) /\text{\rm Prin}(A)$  is the {\bf divisor class group} of $A$.
See \cite{Matsumura.86}, \S\,20.

\begin{proposition}\label{Krull-intersection} {\rm ( \cite{Samuel.64}, Cor.\,(a) to Prop.\,4.1)} The intersection of a finite number of Krull domains (within a common field) is a Krull domain. 
\end{proposition}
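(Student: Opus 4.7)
The plan is to invoke the equivalent characterization of Krull domains by defining families of DVRs, which is readily deduced from Theorem \ref{coiv0w309gc}: an integral domain $A$ with fraction field $L$ is a Krull domain if and only if there is a family $(V_\alpha)_{\alpha \in I}$ of DVRs of $L$ such that $A = \bigcap_{\alpha \in I} V_\alpha$ and such that every nonzero $x \in A$ is a unit in $V_\alpha$ for all but finitely many $\alpha$. Given the Krull domains $A_1, \ldots, A_n$ sitting in a common field $K$, set $A := A_1 \cap \cdots \cap A_n$ (a domain, as a subring of $K$) and $L := \Frac(A) \subseteq K$, and produce such a defining family for $A$ from the essential valuations of the $A_j$.

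The core of the construction is as follows. For each $j \in \{1,\ldots,n\}$ and each $\pgoth \in \bP(A_j)$, the localization $(A_j)_\pgoth$ is a DVR of $\Frac(A_j)$ with normalized valuation $v_{j,\pgoth}$. Since $A \subseteq A_j$, we have $L \subseteq \Frac(A_j)$, so $v_{j,\pgoth}$ restricts to $L^*$. Either the restriction is trivial (discard the index), or its image is a nonzero subgroup of $\Z$ and therefore of the form $c\Z$; after normalization one obtains a DVR $V_{j,\pgoth} := (A_j)_\pgoth \cap L$ of $L$. The identity
$$ A = \bigcap_{j=1}^n A_j = \bigcap_{j=1}^n \bigcap_{\pgoth \in \bP(A_j)} (A_j)_\pgoth = \bigcap_{j, \pgoth} \bigl( (A_j)_\pgoth \cap L \bigr) $$
then holds, where the middle equality is Theorem \ref{coiv0w309gc}(b) applied to each $A_j$ and the last uses $A \subseteq L$ (discarded indices contribute all of $L$).

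Finally, for the finiteness condition: given a nonzero $x \in A$, Theorem \ref{coiv0w309gc}(c) applied to each $A_j$ shows that $v_{j,\pgoth}(x) \neq 0$ for only finitely many $\pgoth \in \bP(A_j)$, and since $n$ is finite, $x$ is a unit in all but finitely many $V_{j,\pgoth}$. The main subtlety is the passage from essential valuations of $A_j$ on $\Frac(A_j)$ to DVRs of the possibly strictly smaller field $L$; this is precisely why one must intersect with $L$ rather than try to identify $\bP(A)$ with a subset of $\bigsqcup_j \bP(A_j)$, which in general fails. Once this is handled, the verification is routine.
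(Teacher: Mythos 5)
The paper gives no proof of Proposition~\ref{Krull-intersection} at all --- it simply cites Samuel --- and your argument is exactly the one in the cited source: take the union of the defining families of discrete valuations of the $A_j$, restrict to $L=\Frac(A)$, and observe that the finite-character property survives a finite union. Your handling of the genuine subtlety (that $L$ may be strictly smaller than the $\Frac(A_j)$, so one must intersect the valuation rings with $L$ rather than match up height-one primes) is correct, and the verification of the intersection identity and of the finite-character condition for nonzero $x\in A$ via Theorem~\ref{coiv0w309gc}(c) is sound.

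The one soft spot is your opening claim that the characterization you invoke --- $A$ is Krull if and only if $A$ is the intersection of \emph{some} family of DVRs of $\Frac(A)$ with the finite-character property --- ``is readily deduced from Theorem~\ref{coiv0w309gc}.'' Only the forward implication is immediate (take the essential family $\{A_\pgoth\}_{\pgoth\in\bP(A)}$). The converse, which is the direction your proof actually needs, is not a formal consequence of Theorem~\ref{coiv0w309gc}: that theorem speaks only of the essential valuations, whereas your family $\{V_{j,\pgoth}\}$ is a priori unrelated to $\bP(A)$, and passing from an arbitrary defining family to conditions (a)--(c) for the essential one is a substantive theorem (it is the content of Bourbaki, Chap.~VII, \S 1, or equivalently the fact that the definition of Krull domain via defining families, as in Samuel and Fossum, agrees with the characterization of Theorem~\ref{coiv0w309gc}). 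If you cite that equivalence properly rather than attributing it to Theorem~\ref{coiv0w309gc}, the proof is complete; as written, the reduction hides the only nontrivial input behind an inaccurate ``readily deduced.''
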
 

\begin{proposition}\label{Krull-localization} {\rm (\cite{Samuel.64}, Prop.\,4.2 and Thm.\,6.3)} If $A$ is a Krull domain and $S\subset A\setminus\{ 0\}$ is a multiplicatively closed set, then $S^{-1}A$ is a Krull domain. If $S$ is generated by prime elements of $A$, then ${\rm Cl}(S^{-1}A)\cong {\rm Cl}(A)$. 
\end{proposition}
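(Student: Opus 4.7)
The plan is to verify the three Krull conditions of Theorem~\ref{coiv0w309gc} for $S^{-1}A$ and, for the second part, to analyze the induced map on divisor groups under the extra hypothesis on $S$.

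I would begin with the standard bijection between $\bP(S^{-1}A)$ and $\setspec{\pgoth \in \bP(A)}{\pgoth \cap S = \emptyset}$, given by $\qgoth \mapsto \qgoth \cap A$ with inverse $\pgoth \mapsto S^{-1}\pgoth$. For such a matched pair the canonical identification $(S^{-1}A)_{S^{-1}\pgoth} = A_\pgoth$ immediately gives condition~(a) of Theorem~\ref{coiv0w309gc}. For condition~(c), any nonzero $x \in S^{-1}A$ has the form $a/s$ with $a \in A$ and $s \in S$; then $\setspec{\qgoth \in \bP(S^{-1}A)}{x \in \qgoth}$ embeds into $\setspec{\pgoth \in \bP(A)}{a \in \pgoth}$, which is finite by~(c) applied to $A$.

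Condition~(b) is the most delicate step and I expect it to be the main obstacle. Working inside $K = \Frac(A) = \Frac(S^{-1}A)$, suppose $x$ lies in $\bigcap_{\qgoth \in \bP(S^{-1}A)}(S^{-1}A)_\qgoth$. The bijection above translates this to the statement $v_\pgoth(x) \geq 0$ for every $\pgoth \in \bP(A)$ with $\pgoth \cap S = \emptyset$. The contrapositive forces the finite set $T = \setspec{\pgoth \in \bP(A)}{v_\pgoth(x) < 0}$ to consist entirely of primes meeting $S$. Choosing $s_\pgoth \in \pgoth \cap S$ for each $\pgoth \in T$ and forming $s = \prod_{\pgoth \in T} s_\pgoth^{N_\pgoth} \in S$ with exponents large enough that $v_\pgoth(sx) \geq 0$ for all $\pgoth \in T$ (and automatically for $\pgoth \notin T$), one obtains $sx \in \bigcap_{\pgoth \in \bP(A)} A_\pgoth = A$ by~(b) for $A$; hence $x \in S^{-1}A$.

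For the class group statement, I would introduce the homomorphism $\varphi : \Div(A) \to \Div(S^{-1}A)$ sending $\pgoth$ to $S^{-1}\pgoth$ when $\pgoth \cap S = \emptyset$ and to $0$ otherwise. The bijection above shows $\varphi$ is surjective, and a direct check gives $\varphi \circ \div_A = \div_{S^{-1}A}$ on $K^*$, so $\varphi$ descends to a surjection $\Cl(A) \to \Cl(S^{-1}A)$. When $S$ is generated by prime elements, any $\pgoth \in \bP(A)$ meeting $S$ contains some prime generator $p$; since $pA$ is itself a height-$1$ prime of the Krull domain $A$ contained in $\pgoth$, we must have $\pgoth = pA = \div_A(p)$ in $\Div(A)$. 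Hence every generator of $\ker\varphi$ lies in $\text{\rm Prin}(A)$, so the descended map is injective, completing the isomorphism.
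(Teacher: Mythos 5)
The paper offers no proof of this proposition --- it is quoted from Samuel with a citation --- so there is nothing internal to compare against; your argument stands or falls on its own, and it is correct. It is the standard proof: verify the three conditions of Theorem~\ref{coiv0w309gc} for $S^{-1}A$ via the correspondence between $\bP(S^{-1}A)$ and the height-one primes of $A$ disjoint from $S$, then track divisors through that correspondence. The one thing worth making explicit is that your argument leans in two places on a fact about Krull domains that is not among the three axioms of Theorem~\ref{coiv0w309gc}, namely that every nonzero prime ideal contains a height-one prime (equivalently, that minimal primes of nonzero principal ideals have height one). You need it first to see that a height-one prime $\qgoth$ of $S^{-1}A$ contracts to a \emph{height-one} prime of $A$ --- without this, condition~(a) for $S^{-1}A$ does not follow from condition~(a) for $A$, since the easy half of the bijection only shows that each $S^{-1}\pgoth$ with $\pgoth\in\bP(A)$ lands in $\bP(S^{-1}A)$, not that nothing else does --- and again at the end to justify that $pA\in\bP(A)$ for a prime element $p$, which is what forces $\pgoth=pA$ and makes the class of $\pgoth$ principal. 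Both uses are standard and easily supplied, so this is a remark on completeness rather than a gap; the rest (the finiteness argument for condition~(c), the clearing of denominators for condition~(b), and the surjection $\Cl(A)\to\Cl(S^{-1}A)$ with kernel generated by classes of primes meeting $S$) is exactly right.
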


\begin{proposition}\label{Krull-descent} 
Let $A$ be an integral domain, let $p_1,\hdots ,p_n\in A\setminus\{ 0\}$ be primes, and let 
$S\subset A\setminus\{ 0\}$ be the multiplicatively closed set generated by $p_1,\hdots ,p_n$. 
\begin{itemize}
\item [{\bf (a)}] $A=S^{-1}A\cap A_{(p_1)}\cap\cdots \cap A_{(p_n)}$
\smallskip
\item [{\bf (b)}] Assume that $A_{(p_i)}$ is a DVR, $1\le i\le n$. If $S^{-1}A$ is a Krull domain, then $A$ is a Krull domain. 
\end{itemize}
\end{proposition}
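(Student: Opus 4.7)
The plan is to handle (a) by a direct induction on the denominator, then deduce (b) formally from (a) using \textit{Proposition~\ref{Krull-intersection}}.

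For part (a), the inclusion $A \subseteq S^{-1}A \cap A_{(p_1)} \cap \cdots \cap A_{(p_n)}$ is trivial. For the reverse, take $x$ in the intersection and write $x = a/s$ with $a \in A$ and $s = p_1^{e_1}\cdots p_n^{e_n}$ a product of powers of the given primes. I would induct on $e := e_1+\cdots+e_n$. The case $e=0$ is immediate since then $s=1$. For the inductive step, fix some $j$ with $e_j > 0$. Because $x \in A_{(p_j)}$, I can also write $x = b/t$ with $b \in A$ and $t \in A \setminus (p_j)$. Clearing denominators gives $at = bs$ in $A$, and since $p_j \mid s$ we obtain $p_j \mid at$; primality of $p_j$ together with $t \notin (p_j)$ forces $p_j \mid a$ in $A$. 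Writing $a = p_j a'$ and cancelling yields $x = a'/(s/p_j)$, a representation with smaller $e$, so the induction closes.

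For part (b), the hypotheses now say that each of the rings $A_{(p_1)}, \ldots, A_{(p_n)}$ is a DVR and hence a Krull domain, and that $S^{-1}A$ is a Krull domain. All of these rings sit inside the common field $\Frac(A)$, so \textit{Proposition~\ref{Krull-intersection}} applies and their intersection is a Krull domain. By part (a), this intersection equals $A$, which therefore is a Krull domain.

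The only non-formal step is the reduction in part (a), and the main thing to be careful about is that the primality of each $p_i$ in $A$ is used twice — once to ensure $(p_i)A$ really is a prime ideal so that the localization $A_{(p_i)}$ makes sense as described, and once in the divisibility argument $p_j t \nmid$ in the cancellation step. No obstacle seems serious; the statement is essentially a packaging of the standard ``Krull intersection over localizations at height-one primes'' philosophy, adapted to the situation where only the listed primes $p_1,\ldots,p_n$ might fail to survive after inverting $S$.
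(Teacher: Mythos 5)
Your proof is correct and follows essentially the same route as the paper's: the core step in both is that if $p_j$ divides the denominator and $x\in A_{(p_j)}$, then writing $x=b/t$ with $t\notin(p_j)$ and clearing denominators forces $p_j\mid a$ by primality, after which part (b) is the formal combination of part (a) with \textit{Proposition~\ref{Krull-intersection}}. The only difference is organizational — you descend by induction on the total exponent of the denominator, whereas the paper picks a reduced representative $a/s$ up front and derives a contradiction — and these are interchangeable.
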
 

\begin{proof}
Let $x \in S^{-1}A \cap A_{(p_1)}\cap\cdots \cap A_{(p_n)}$.
Since $x \in S^{-1}A$, we may write $x = a/s$ where $a \in A$ and $s \in S$ are chosen
in such a way that for each $i$ satisfying $p_i \mid s$, we have $p_i \nmid a$.
If $s \notin A^*$ then there exists $i$ such that  $p_i \mid s$ (so $p_i \nmid a$);
since $x \in A_{(p_i)}$, we have $x = a_i/s_i$ for some $a_i \in A$ and $s_i \in A \setminus (p_i)$;
thus $a s_i = a_i s$, so $p_i \mid (a s_i)$ where $p_i \nmid a$ and $p_i \nmid s_i$, a contradiction.
So $s \in A^*$ and hence $x \in A$. This proves (a).
Part (b) follows from part (a) and {\it Proposition\,\ref{Krull-intersection}}.
\end{proof}

\begin{proposition}\label {ckjpq9wxdiqpows} {\rm (\cite{Fossum.73}, Prop.\,6.1)} 
A ring $A$ is a UFD if and only if it is a Krull domain satisfying $\Cl(A)=0$.
\end{proposition}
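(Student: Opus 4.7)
The plan is to prove the equivalence by passing through the intermediate condition that every height one prime of $A$ is principal. The key observation is that, in any Krull domain, the class $[\pgoth] \in \Cl(A)$ of a height one prime $\pgoth$ vanishes if and only if $\pgoth$ is principal, and since $\bP(A)$ freely generates $\Div(A)$, this happens for every $\pgoth \in \bP(A)$ exactly when $\Cl(A) = 0$.

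For the forward direction, assume $A$ is a UFD. I would first verify that $A$ satisfies the three conditions of {\it Theorem~\ref{coiv0w309gc}}. Given $\pgoth \in \bP(A)$, factor any nonzero element of $\pgoth$ into primes to extract a prime $p \in \pgoth$; since $pA$ is a height one prime contained in $\pgoth$, equality $\pgoth = pA$ holds, so $A_\pgoth$ is a DVR for the $p$-adic valuation. Condition~(c) is immediate from unique factorization. Condition~(b) follows because an element of $\Frac(A)$ lying in every $A_\pgoth$ must, when written in lowest terms, have unit denominator. Each principal generator $p$ then satisfies $\div_A(p) = \pgoth$, so $[\pgoth] = 0$ for every $\pgoth \in \bP(A)$, forcing $\Cl(A) = 0$.

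For the converse, assume $A$ is a Krull domain with $\Cl(A) = 0$. For each $\pgoth \in \bP(A)$, the relation $[\pgoth] = 0$ yields $x \in \Frac(A)^*$ with $\div_A(x) = \pgoth$, i.e., $v_\pgoth(x) = 1$ and $v_\qgoth(x) = 0$ for all $\qgoth \in \bP(A) \setminus \{\pgoth\}$. Condition~(b) of {\it Theorem~\ref{coiv0w309gc}} puts $x \in A$, and applying the same condition to $y/x$ for arbitrary $y \in \pgoth$ shows $\pgoth = xA$; in particular $x$ is a prime element. Thus every height one prime of $A$ is principal.

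To finish, take any nonzero non-unit $a \in A$ and write $\div_A(a) = \sum_{i=1}^k n_i \pgoth_i$ with $\pgoth_i = p_i A$, $p_i$ prime, and $n_i \geq 1$. The element $u = a/(p_1^{n_1}\cdots p_k^{n_k})$ satisfies $\div_A(u) = 0$, and applying condition~(b) to both $u$ and $u^{-1}$ puts both in $A$, so $u \in A^*$. This yields a factorization $a = u\, p_1^{n_1} \cdots p_k^{n_k}$ into primes, and its uniqueness comes from uniqueness of $\div_A(a)$. I expect the main subtlety to lie in the bookkeeping with valuations needed to pin down $\pgoth = xA$ in the converse (not merely $xA \subseteq \pgoth$); this small but essential calculation is the technical heart of the argument.
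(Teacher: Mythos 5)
Your proof is correct and is the standard argument (UFD $\Leftrightarrow$ Krull domain in which every height one prime is principal $\Leftrightarrow$ Krull domain with trivial class group), which is essentially the proof in the reference \cite{Fossum.73} that the paper cites; the paper itself gives no proof of this proposition. All the key steps check out, including the one you flag as the technical heart: from $\div_A(x)=\pgoth$ one gets $x\in\pgoth$ via $\pgoth A_\pgoth\cap A=\pgoth$, and $\pgoth\subseteq xA$ by applying condition (b) of Theorem~\ref{coiv0w309gc} to $y/x$.
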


\subsection{Nagata's Criterion}
Let $A$ be an integral domain. It is well known that, if $A$ is a UFD, then every localization of $A$ is a UFD. 
A partial converse is given by Nagata. Recall that an integral domain is {\bf atomic} if every nonzero element factors as a finite product of irreducible elements.
Recall also that every Krull domain is atomic. 

\begin{theorem}\label{Nagata}  Let $A$ be an integral domain and $S\subset A$ a multiplicatively closed set generated by a set of prime elements, $0\not\in S$.
Assume that $S^{-1}A$ is a UFD. If $A$ is an atomic domain, then $A$ is a UFD. 
\end{theorem}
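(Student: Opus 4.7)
The plan is to prove that every irreducible element of $A$ is prime; combined with the atomic hypothesis on $A$, the standard characterization of UFDs will then yield the conclusion. Let $p\in A$ be irreducible. Since the multiplicative set $S$ is generated by primes $\{q_\alpha\}\subset A$, the element $p$ is a unit in $S^{-1}A$ if and only if $p\mid s$ in $A$ for some $s\in S$, and this gives a natural dichotomy. In the case where $p$ does divide some such $s$, I would write $s=q_{\alpha_1}\cdots q_{\alpha_n}$ and peel off $q_{\alpha_i}$-factors from the cofactor of $p$ one at a time, using the primeness of each $q_{\alpha_i}$ in $A$; the process must terminate with $q_{\alpha_i}\mid p$ for some $i$ (otherwise $p$ would be a unit), after which irreducibility of $p$ forces $p\sim q_{\alpha_i}$, so $p$ is prime.

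The main work is the case where $p$ is a nonzero nonunit of the UFD $S^{-1}A$. Then $p$ has a prime factor in $S^{-1}A$, and after multiplying by a suitable element of $S$ this prime factor may be represented by an element of $A$. Using the atomicity of $A$, I would then repeatedly divide this element by generating primes of $S$ in $A$---each such division preserves primeness in $S^{-1}A$ (since the $q_\alpha$ are units there) and strictly decreases the number of irreducible factors in $A$---until no $q_\alpha$ divides the result. This produces an element $q\in A$ satisfying (i) $q$ is prime in $S^{-1}A$, (ii) $q\mid p$ in $S^{-1}A$, and (iii) no $q_\alpha$ divides $q$ in $A$. The key claim is then that $q$ is prime in $A$: from $q\mid ab$ in $A$ one obtains, via primeness of $q$ in $S^{-1}A$, a relation $sa=qc$ in $A$ (say) with $s\in S$ and $c\in A$; factoring $s$ into generating primes and applying the primeness of each $q_\alpha$ in $A$ together with (iii), every $q_\alpha$-factor of $s$ must be absorbed into $c$, yielding $q\mid a$ in $A$. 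Finally, (ii) provides a relation $pt=qd$ in $A$ with $t\in S$, and primeness of $q$ in $A$ together with (iii) (which rules out $q\mid t$ by a repetition of the Case 1 argument) forces $q\mid p$ in $A$; irreducibility of $p$ then gives $p\sim q$, so $p$ is prime.

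The principal obstacle is the second case: extracting from the prime factorization of $p$ in $S^{-1}A$ an element $q\in A$ that is genuinely free of the generating primes of $S$, and then upgrading the primeness of $q$ from $S^{-1}A$ back to $A$ despite the $S$-denominators appearing in the witnessing equations. Atomicity of $A$ is what guarantees that the ``denominator-clearing'' procedure terminates, and condition (iii) is exactly what is needed to propagate primeness through these equations.
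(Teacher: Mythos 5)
Your argument is correct and is essentially the standard proof of Nagata's criterion (the one in Cohn, \S\,11.3, Theorem 5, which the paper cites instead of reproving): show that every irreducible of $A$ is prime by taking a prime factor of $p$ in $S^{-1}A$, clearing denominators, stripping off the generating primes of $S$, and transporting primeness back to $A$, then invoke atomicity. The only point to phrase more carefully is the termination of the stripping step: ``the number of irreducible factors'' of an element of a general atomic domain is not well defined, so one should fix a single factorization of $q'$ into irreducibles and induct on its length, observing that a prime of $A$ dividing such a product must be associate to one of the factors.
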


Nagata's original formulation assumed $A$ to be noetherian (\cite{Nagata.57}, Lemma 2). 
Samuel extended it to the case $A$ is a Krull domain (\cite{Samuel.64}, Corollary to Theorem 6.3). 
Then Kaplansky generalized it to the case where $A$ satisfies the ACCP (\cite{Kaplansky.74}, Theorem 177). 
The version of the criterion stated above, together with an elementary proof, is due to Cohn (\cite{Cohn.77}, \S\,11.3, Theorem 5). 

\subsection{Samuel's Criterion} 
Recall that, in an integral domain $A$, elements $a,b\in A\setminus\{ 0\}$ are {\bf relatively prime} if $aA\cap bA=abA$. 
\begin{theorem}\label{Samuel-lemma}  {\rm (\cite{Samuel.64}, Proposition 7.6.)} Assume that $A$ is an integral domain and $a,b\in A\setminus\{ 0\}$ are relatively prime.
Let $A[X]\cong A^{[1]}$ and $A'=A[X]/(aX-b)$, and consider the subring $A[b/a]$ of ${\rm frac}(A)$. 
\begin{itemize}
\item [{\bf (a)}] The kernel of the $A$-surjection $A[X]\to A[b/a]$, $X\mapsto b/a$ equals $(aX-b)$. Consequently, $(aX-b)$ is a prime ideal of $A[X]$ and
$A'\cong A[b/a]$.
\item [{\bf (b)}] If $A$ is a noetherian normal domain and $aA$ and $aA+bA$ are prime ideals, then $A'$ is a noetherian normal domain and ${\rm Cl}(A')\cong {\rm Cl}(A)$. 
\item [{\bf (c)}] If $A$ is a noetherian UFD and $aA$ and $aA+bA$ are prime ideals, then $A'$ is a noetherian UFD. 
\end{itemize}
\end{theorem}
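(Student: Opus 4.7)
The plan is to prove (a), (b), (c) in order. Part (c) will be immediate from (b) and \textit{Proposition \ref{ckjpq9wxdiqpows}}, so the real work lies in (a) and (b).

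For Part (a), the inclusion $(aX-b)\subset\ker\varphi$ (where $\varphi:A[X]\to A[b/a]$ sends $X\mapsto b/a$) is immediate. For the reverse, I would use the congruences $a^n X^k\equiv a^{n-k}b^k\pmod{aX-b}$ for $0\le k\le n$, which assemble into $a^n f(X)\equiv a^n f(b/a)\pmod{aX-b}$ for any $f=\sum_{i=0}^n c_i X^i\in A[X]$. Hence $f\in\ker\varphi$ forces $a^n f=(aX-b)g$ for some $g\in A[X]$. To descend from $a^n f$ to $f$, the relative-primeness hypothesis $aA\cap bA=abA$ is essential: if $cb\in aA$ then $cb\in aA\cap bA=abA$ gives $c\in aA$, so the image of $b$ is a non-zero-divisor in $A/aA$. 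Reducing $a^n f=(aX-b)g$ modulo $a$ yields $\bar b\,\bar g(X)=0$ in $(A/aA)[X]$, hence $g\in aA[X]$; writing $g=ag_1$ and cancelling $a$ gives $a^{n-1}f=(aX-b)g_1$, and iterating $n$ times delivers $f\in(aX-b)A[X]$. Since $A[b/a]$ is a domain, $(aX-b)$ is prime and $A'\cong A[b/a]$.

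For Part (b), noetherianity of $A'$ is automatic as a quotient of $A[X]$. For normality I would apply \textit{Proposition \ref{Krull-descent}} to $A'$ with the single prime element $a$. Three items need checking. First, $a$ is prime in $A'$: setting $\pgoth=aA+bA$ and using $b=a(b/a)\in aA'$ we obtain $\pgoth A'=aA'$, and the presentation $A'/aA'\cong A[X]/(a,aX-b)=(A/\pgoth)[X]$ exhibits $A'/aA'$ as a polynomial ring over the domain $A/\pgoth$, hence a domain; it is proper since $aA+bA\ne A$. Second, $aA'$ has height $1$ by Krull's principal ideal theorem, so $A'_{(a)}$ is a one-dimensional noetherian local domain with principal maximal ideal, hence a DVR. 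Third, $A'_a=A_a=A[1/a]$ is a Krull domain by \textit{Proposition \ref{Krull-localization}}. Hence $A'$ is a Krull domain, and being noetherian it is normal. For the class-group isomorphism, since $a$ is prime in both $A$ and $A'$ and $A_a=A'_a$, \textit{Proposition \ref{Krull-localization}} yields $\Cl(A)\cong\Cl(A_a)=\Cl(A'_a)\cong\Cl(A')$.

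Part (c) is then immediate: a noetherian UFD has trivial class group, so $\Cl(A')=0$ by (b), and a noetherian Krull domain with trivial class group is a UFD by \textit{Proposition \ref{ckjpq9wxdiqpows}}. The most delicate step of the argument is the descent in Part (a); the entire induction rests on extracting, from the single hypothesis $aA\cap bA=abA$, the fact that $b$ represents a non-zero-divisor modulo $a$.
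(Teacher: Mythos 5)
Your proof is correct. The paper does not prove this theorem itself (it is quoted from Samuel, Proposition 7.6), but your argument is the standard one and is exactly the strategy the authors deploy for their generalization, Theorem~\ref{First-Criterion}: part (a) via the division trick $a^nf\equiv a^nf(b/a) \pmod{aX-b}$ followed by descent using the fact that $\bar b$ is a non-zero-divisor in $A/aA$, and part (b) via Proposition~\ref{Krull-descent} applied to the prime $a$ together with $A'_a=A_a$ and Proposition~\ref{Krull-localization} for the class groups. The only place where noetherianity is genuinely used is in concluding that $A'_{(a)}$ is a DVR from Krull's principal ideal theorem; this is precisely the step that the paper's Lemmas~\ref{pckjnv2039d992} and~\ref{dpf9i2p03wd0dX3od8tfq5} replace in the non-noetherian setting.
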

The statement of this result in Samuel's paper does not assume $a,b\ne 0$ but this was clearly an oversight, since the result is not generally true if $a=0$ or $b=0$. 
Part (a) of this theorem is somewhat stronger than Samuel's original formulation; it states the full consequences of Samuel's proof. 
We also need:
\begin{lemma}\label{rel-prime} Assume that $A$ is an integral domain and $f,g\in A\setminus\{ 0\}$ are relatively prime. 
\begin{itemize}
\item [{\bf (a)}] $f^m$ and $g^n$ are relatively prime for each pair of integers $m,n\ge 1$.
\item [{\bf (b)}] $f$ and $g+fh$ are relatively prime for each $h\in A$ such that $g+fh\ne 0$.
\item [{\bf (c)}] Given $h\in A$, if $gh\in fA$, then $h\in fA$.
\item [{\bf (d)}] Let $B$ be an integral domain containing $A$. If $B$ is a free $A$-module, then $f$ and $g$ are relatively prime in $B$. 
\end{itemize}
\end{lemma}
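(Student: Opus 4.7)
The plan is to prove part (c) first, directly from the definition of relative primality, and then to derive the other three parts from (c).

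For (c), I observe that $gh \in fA$ combined with the obvious $gh \in gA$ gives $gh \in fA \cap gA = fgA$, so $gh = fgk$ for some $k \in A$, and cancellation of $g$ in the integral domain $A$ yields $h = fk \in fA$.

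For (a), the strategy is a double induction, with (c) as the engine. First I would fix $m = 1$ and prove by induction on $n$ that $f$ and $g^n$ are relatively prime: given $x = fa = g^n b \in fA \cap g^nA$, rewrite this as $fa = g(g^{n-1}b) \in gA$, apply (c) with the roles of $f$ and $g$ swapped (relative primality is symmetric) to conclude $a = ga'$, and then reduce $fa' = g^{n-1}b$ to the inductive hypothesis $fA \cap g^{n-1}A = fg^{n-1}A$. By symmetry, $f^m$ and $g$ are then also relatively prime. Finally, to get $f^m$ and $g^n$ relatively prime, I run the same kind of induction one more time, using that $g^n$ and $f$ are relatively prime to peel off powers of $f$ from one side.

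For (b), let $x = fa = (g + fh)b \in fA \cap (g+fh)A$. Rearranging gives $gb = f(a - hb) \in fA$, so by (c) applied to the original pair $(f,g)$, we obtain $b = fb'$ for some $b' \in A$, and then $x = (g + fh)fb' \in f(g+fh)A$. For (d), choose an $A$-basis $(e_i)_{i \in I}$ of $B$. An element $x \in fB \cap gB$ can be written as $x = f \sum a_i e_i = g \sum b_i e_i$, which by uniqueness of coefficients forces $f a_i = g b_i$ in $A$ for every $i$; part (c) then gives $b_i \in fA$ for each $i$, and collecting these produces $x \in fgB$.

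I expect the most delicate step to be the bookkeeping in the double induction of part (a): one must be careful to invoke (c) (or its symmetric form) exactly $n$ or $m$ times in the right order, and to reduce to the correct smaller case at each stage. The other three parts are short once (c) is in hand; the only substantive ingredient beyond (c) for part (d) is the use of a basis, which is essential since uniqueness of coefficients is what lets us pass from a single equation in $B$ to $|I|$ divisibility statements in $A$.
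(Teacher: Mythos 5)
Your proposal is correct. The paper itself gives no proof here --- it states that parts (a)--(c) are left to the reader and cites \cite{Freudenburg.17}, Lemma~2.46 for part (d) --- and your argument is exactly the standard verification one would expect: part (c) follows immediately from $gh\in fA\cap gA=fgA$ and cancellation, and your reductions of (a), (b), (d) to (c) (including the symmetry of the relation $fA\cap gA=fgA$ used in the induction for (a), and the uniqueness of coefficients in a free module for (d)) all check out.
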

The reader can easily verify parts (a), (b) and (c) of this lemma; part (d) is \cite{Freudenburg.17}, Lemma 2.46. Note that part (d) includes the case $B=A^{[n]}$ for some $n\ge 1$. 

\subsection{Prime Avoidance}
The following lemma represents an instance of the Prime Avoidance Lemma.\footnote{See for
example: www.math.lsa.umich.edu/\!$\sim$hochster/615W17/supDim.pdf }

\begin{lemma}\label{coprime} Given $n\in\N$ and $a_1,\hdots ,a_n,b,c\in\Z$ with $\gcd (a_1,\hdots ,a_n,b,c)=1$, there exist $m_1,\hdots ,m_n\in\Z$ with 
$\gcd (c,b+m_1a_1+\cdots +m_na_n)=1$.
\end{lemma}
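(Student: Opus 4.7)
The plan is to reduce the claim to a local divisibility problem modulo each prime divisor of $c$, then patch the local choices together via the Chinese Remainder Theorem. If $|c|=1$ any choice of $m_i$ works, and the case $c=0$ is degenerate (it would force $b+\sum_i m_ia_i=\pm 1$, which is not always achievable, so the statement is really meaningful under the implicit assumption $c\neq 0$, as is typical in prime-avoidance applications). So I concentrate on the substantive case where $c$ has a nonempty finite set of distinct prime divisors $p_1,\dots,p_r$.

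For each $j\in\{1,\dots,r\}$ the hypothesis $\gcd(a_1,\dots,a_n,b,c)=1$ combined with $p_j\mid c$ forces $p_j$ not to divide at least one of $a_1,\dots,a_n,b$. Using this I would prescribe residues of the $m_i$ modulo $p_j$ that make $b+\sum_{i}m_ia_i\not\equiv 0\pmod{p_j}$. Specifically, if $p_j\nmid b$, set $m_i\equiv 0\pmod{p_j}$ for every $i$, so the sum is congruent to $b\not\equiv 0\pmod{p_j}$; if instead $p_j\mid b$, choose any index $i_j$ with $p_j\nmid a_{i_j}$ and set $m_{i_j}\equiv 1\pmod{p_j}$ together with $m_i\equiv 0\pmod{p_j}$ for $i\neq i_j$, making the sum congruent to $a_{i_j}\not\equiv 0\pmod{p_j}$.

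Since the moduli $p_1,\dots,p_r$ are pairwise coprime, the Chinese Remainder Theorem produces integers $m_1,\dots,m_n$ that simultaneously realize all $r$ prescribed systems of residues. By construction, no $p_j$ divides $b+\sum_{i}m_ia_i$; since $p_1,\dots,p_r$ exhaust the prime divisors of $c$, this yields $\gcd(c,b+\sum_{i}m_ia_i)=1$, the desired conclusion. The only non-mechanical step is the per-prime case split between $p_j\mid b$ and $p_j\nmid b$, but CRT imposes no compatibility constraint across the distinct primes, so the amalgamation of the local choices is automatic and there is no real obstacle to overcome.
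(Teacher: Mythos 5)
Your proof is correct, and it takes a somewhat different (and more direct) route than the paper. The paper first replaces $a_1\Z+\cdots+a_n\Z$ by $d\Z$ with $d=\gcd(a_1,\dots,a_n)$ and then argues by contradiction that the coset $b+d\Z$ cannot be contained in $p_1\Z\cup\cdots\cup p_s\Z$ (where the $p_j$ are the prime factors of $c$), producing a single witness $\lambda=b+d\sigma$ with $\sigma$ the product of the primes in the union that do not divide $b$; your argument instead prescribes, for each prime $p_j\mid c$, residues of the $m_i$ modulo $p_j$ that make $b+\sum_i m_ia_i$ a unit mod $p_j$, and glues these local choices by the Chinese Remainder Theorem. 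Both arguments are elementary and rest on the same observation (for each $p_j\mid c$, some $a_i$ or $b$ escapes $p_j$); yours is constructive and avoids the case analysis by contradiction, at the modest cost of invoking CRT, while the paper's version stays entirely within the ``prime avoidance'' framework it cites. Your remark about $c=0$ is well taken: the statement genuinely fails there (e.g.\ $n=1$, $a_1=5$, $b=2$, $c=0$), and the paper's own proof likewise tacitly assumes $c\neq 0$ by passing from $|c|\ge 2$ to a nonempty finite set of prime factors; in all of the paper's applications $c$ is a nonzero integer, so nothing is lost.
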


\begin{proof} The lemma is clearly true if $|c|=1$, so assume that $|c|\ge 2$. 
Let $(a_1,\hdots ,a_n)=d\Z$, and let $p_1, \dots, p_s$ be the distinct prime factors of $c$ ($s\ge 1$).
Suppose that $b+d\Z\subset p_1\Z\cup\cdots\cup p_s\Z$. 
Let $\{ p_1,\hdots ,p_s\}=S_1\cup S_2$, where elements of $S_1$ divide $b$, and elements of $S_2$ do not divide $b$. By hypothesis, 
$S_1\ne\emptyset$. Set $\Sigma_i=\bigcup_{p\in S_i}p\Z$ ($i=1,2$) and 
let $\lambda=b+d\sigma$, where $\sigma$ is the product of the elements of $S_2$. Then $b\in\Sigma_1$, and by hypothesis, either $\lambda\in\Sigma_1$ or $\lambda\in\Sigma_2$. 
If $\lambda\in\Sigma_1$, then $d\sigma\in\Sigma_1$ implies $d\in\Sigma_1$, a contradiction, since $\gcd (d,b,c)=1$. If $\lambda\in\Sigma_2$, then $b\in\Sigma_2$, also a contradiction. 
Therefore, $b+d\Z$ is not contained in $p_1\Z\cup\cdots\cup p_s\Z$. 
Pick an element $b + \sum_{i=1}^n m_i a_i$ not in $\bigcup_{j=1}^s p_j\Z$, where $m_i\in\Z$. Then
$\gcd\big( c, b + \sum_{i=1}^n m_i a_i \big) = 1$.
\end{proof}

\subsection{$\Z$-Gradings}

\begin{lemma}\label{characterization-bis}
Let $A$ be an integral domain with a non-trivial $\Z$-grading and let $\alpha$ be a nonzero homogeneous element of $A$. 
There exist homogeneous elements $f\in \alpha A \setminus\{0\}$ and $w\in A_f^{\ast}$ such that 
\[
A_f=(A_f)_0[w,w^{-1}]\cong (A_f)_0^{[\pm 1]}
\]
where $(A_f)_0$ denotes the subring of $A_f$ of degree zero elements. 
\end{lemma}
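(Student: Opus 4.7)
The plan is to pass to a localization in which the subsemigroup of $\Z$ consisting of degrees of nonzero homogeneous elements becomes a subgroup, and then exhibit a homogeneous unit $w$ of minimal positive degree.

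First I would let $S\subseteq\Z$ be the set of degrees of nonzero homogeneous elements of $A$. Since $A$ is an integral domain, $S$ is closed under addition and contains $0$, and since the grading is non-trivial, $S\ne\{0\}$. Let $n\Z$ (with $n\geq 1$) be the subgroup of $\Z$ generated by $S$; equivalently, $n=\gcd(S)$. Choose finitely many nonzero homogeneous elements $h_1,\dots,h_k\in A$ with $\gcd(\deg h_1,\dots,\deg h_k)=n$, and by B\'ezout pick integers $m_1,\dots,m_k$ with $\sum_{i=1}^k m_i \deg h_i=n$.

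Next I would set $f=\alpha h_1\cdots h_k\in\alpha A\setminus\{0\}$, which is homogeneous. Since $f$ becomes a unit in $A_f$, so does each of $\alpha,h_1,\dots,h_k$, so the element
\[
w=h_1^{m_1}\cdots h_k^{m_k}\in A_f
\]
is a well-defined homogeneous unit, of degree $n$. Note that $A_f$ inherits a $\Z$-grading from $A$ because $f$ is homogeneous, and every homogeneous element of $A_f$ has the form $a/f^r$ with $a\in A$ homogeneous, so its degree lies in $S-\N\cdot\deg f\subseteq n\Z$.

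To see that $A_f=(A_f)_0[w,w^{-1}]$, I would take an arbitrary homogeneous $x\in A_f$; by the preceding observation $\deg x=jn$ for some $j\in\Z$, so $x w^{-j}\in(A_f)_0$ and $x=(xw^{-j})w^j\in(A_f)_0[w,w^{-1}]$. Since $A_f$ is the sum of its homogeneous components, the claimed equality follows. Finally, for the isomorphism $(A_f)_0[w,w^{-1}]\cong(A_f)_0^{[\pm 1]}$, I would check that $w$ is transcendental over $(A_f)_0$: any nontrivial Laurent relation $\sum_{i} c_i w^i=0$ with $c_i\in(A_f)_0$ would, because $c_i w^i$ is homogeneous of degree $in$ and $n\ne 0$, force each $c_i=0$ by comparing graded components.

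There is no real obstacle; the only conceptual step is realizing that the correct $f$ must simultaneously be a multiple of $\alpha$ and carry enough homogeneous factors to make the $\gcd$ of degrees available as the degree of a single unit, which is arranged by absorbing the $h_i$'s into $f$.
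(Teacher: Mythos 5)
Your proof is correct and follows essentially the same route as the paper's: localize at $\alpha$ times a product of homogeneous elements chosen so that a homogeneous unit of degree $\gcd\setspec{i}{A_i\neq 0}$ becomes available, then sort homogeneous elements of $A_f$ by their degree, which lies in $n\Z$. The only cosmetic difference is that the paper packages the B\'ezout step as producing two homogeneous $a,b\in A\setminus\{0\}$ with $\deg a-\deg b=d$ and takes $w=a/b$, whereas you keep the unit as a product $\prod h_i^{m_i}$ of (possibly negative) powers; the two constructions are interchangeable.
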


\begin{proof} 
Let $d = \gcd\setspec{ i \in \Integ }{ A_i \neq 0}$ and note that $d>0$ because the grading is not trivial.
There exist homogeneous elements $a,b \in A \setminus \{0\}$ such that $\deg(a) - \deg(b) = d$.
Define $f = \alpha ab \in \alpha A \setminus\{0\}$ and $w =a/b$, then $w \in A_f^*$ and $w$ is homogeneous of degree $d$.
Given any homogeneous $x \in A_f$, we have $\deg(x) = id$ for some $i \in \Z$, so 
$x/w^i \in (A_f)_0$ and hence $x \in (A_f)_0[w,w^{-1}]$, showing that $A_f = (A_f)_0[w,w^{-1}]$.
Since $\deg(w)>0$, $w$ is algebraically independent over $(A_f)_0$,
so $(A_f)_0[w,w^{-1}]\cong (A_f)_0^{[\pm 1]}$.
\end{proof}


\section{Criteria for a Ring to be a UFD}\label{UFD}


\subsection{First Criterion}

Let $A$ be an integral domain, and let $(a,b)\in A^2$. Define the set
\[
P(A,(a,b))=\text{\rm set of prime elements $p$ of $A$ satisfying  $a\in pA$ and $pA+bA\ne A$.}
\]
The pair $(A,(a,b))$ is said to satisfy {\bf condition} $\mathcal{P}$ if it satisfies each of the following four conditions.
\begin{itemize}
\item [$\mathcal{P}$(i):] $a,b$ are nonzero and relatively prime, and $a$ is either a unit or a product of primes of $A$.
\smallskip
\item [$\mathcal{P}$(ii):] $pA+bA$ is a prime ideal for every $p\in P(A,(a,b))$.
\smallskip
\item [$\mathcal{P}$(iii):] $q\not\in pA+bA$ for every non-associate pair $p,q\in P(A,(a,b))$.
\smallskip
\item [$\mathcal{P}$(iv):] $\bigcap_{i\ge 0}(pA+bA)^i=(0)$ for every $p\in P(A,(a,b))$. 
\end{itemize}
Note that if $a$ is a power of a prime then condition $\mathcal{P}$(iii) is satisfied, since there are no
non-associate pairs $p,q\in P(A,(a,b))$ in this case. 
The following generalizes {\it Theorem\,\ref{Samuel-lemma}}.  

\begin{theorem}\label{First-Criterion}
Let $A$ be a Krull domain, let $(a,b)\in A^2$, and define the ring
\[
A'=A[X]/(aX-b)
\]
where $A[X]\cong A^{[1]}$.  Assume that $(A,(a,b))$ satisfies condition $\mathcal{P}$.
\begin{itemize}

\item [{\bf (a)}] $A'$ is a Krull domain and ${\rm Cl}(A')\cong {\rm Cl}(A)$.

\item [{\bf (b)}] $A'$ is a UFD if and only if $A$ is a UFD.

\end{itemize}
\end{theorem}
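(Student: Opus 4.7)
I would first reduce part (b) to part (a) via Proposition~\ref{ckjpq9wxdiqpows}: given $\Cl(A') \cong \Cl(A)$, $A'$ is a UFD iff $\Cl(A') = 0$ iff $\Cl(A) = 0$ iff $A$ is a UFD. So the content is in (a). The plan for (a) is to apply Proposition~\ref{Krull-descent}(b) to $A'$ with the set $P = P(A,(a,b))$ viewed inside $A'$, together with the multiplicative set $S \subset A$ it generates. This requires (i) each $p \in P$ is prime in $A'$, (ii) $S^{-1}A'$ is a Krull domain, and (iii) each $A'_{(pA')}$ is a DVR. The class group isomorphism then follows from Proposition~\ref{Krull-localization} applied to both $A$ and $A'$, since $S$ is generated by primes in each case.

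For the setup, identify $A' \cong A[b/a] \subseteq K = \Frac(A)$ via Theorem~\ref{Samuel-lemma}(a). Decompose $a = \alpha\beta$, where $\alpha$ is the product (with multiplicity) of primes in $P$ dividing $a$ and $\beta$ collects the remaining prime factors of $a$ (those $q$ with $qA + bA = A$). Each such $q$ becomes a unit in $A'$: combining a B\'ezout relation $1 = qu + bv$ in $A$ with $b = q(a/q)(b/a)$ in $A'$ gives $q(u + (a/q)(b/a)v) = 1$. For (i), the computation
\[
A'/pA' \;=\; (A/pA)[X]/(\bar a X - \bar b) \;=\; (A/pA)[X]/(\bar b) \;=\; (A/(pA+bA))[X],
\]
using $\bar a = 0$ since $p \mid a$, exhibits $A'/pA'$ as a polynomial ring over the domain $A/(pA+bA)$ (a domain by $\mathcal{P}$(ii)), giving (i) and also $pA'\cap A = pA + bA$. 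For (ii), since $\alpha \in S$ and $\beta \in (A')^\ast$, $a$ is a unit in $S^{-1}A'$, whence $S^{-1}A' = (S^{-1}A)[1/\beta]$; the prime factors of $\beta$ lie outside $S$ and remain prime in $S^{-1}A$, so Proposition~\ref{Krull-localization} yields $S^{-1}A'$ Krull with $\Cl(S^{-1}A') \cong \Cl(S^{-1}A) \cong \Cl(A)$.

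The main technical step is (iii). The local ring $R := A'_{(pA')}$ has principal maximal ideal $pR$, so it suffices to prove $\bigcap_n p^n R = (0)$. The crux is the identity
\[
(pA')^n \cap A \;=\; (pA + bA)^n \qquad \text{for all } n \ge 1.
\]
The inclusion $\supseteq$ is immediate since $b = a(b/a) \in pA'$. For the reverse inclusion, note that $(p, b)$ is a regular sequence in $A$ ($p$ is regular, and $b$ is regular modulo $p$ because $p$ is prime with $p \nmid b$); combining this with $\mathcal{P}$(iv) via the standard regular-sequence theorem yields $\mathrm{gr}_I A \cong (A/I)[T_1, T_2]$ for $I = pA + bA$, which in particular is a domain. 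Given $x \in A$ with $x = p^n y$ in $A'$, lift to $x - p^n y = (aX - b)Q$ in $A[X]$ (writing $y = \sum_k c_k X^k$) and equate coefficients; decomposing $a = p^{e_p}w$ with $\gcd(w, p) = 1$ and using the standard Krull-domain cancellation (coprimality of $w$ and $p$ forces $w \mid z$ whenever $w \mid p^N z$), downward induction on the degree produces an explicit expression for $w^d x$ as a sum of terms of the shape $p^{n-\ell}b^\ell \cdot (\text{element of } A)$, each visibly in $I^n$. By $\mathcal{P}$(iii), every prime factor of $w$ lies outside the prime $I$, so $w \notin I$; the domain structure of $\mathrm{gr}_I A$ then makes $w^d$ a nonzerodivisor modulo $I^n$ (if $w^d z \in I^n$ with $z\notin I^n$, the leading symbol $\bar z \in I^k/I^{k+1}$ for some $k<n$ multiplies to a nonzero class $\bar w^d \bar z \in I^k/I^{k+1}$, contradicting $w^d z \in I^n$), and hence $x \in I^n$. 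Combined with $\mathcal{P}$(iv), the identity forces $A \cap \bigcap_n p^n A' = 0$. Since any $y \in A'$ has the form $z/a^N$ with $z \in A$, this upgrades to $\bigcap_n p^n A' = 0$, and the elementary fact that $p$ is a prime element of the domain $A'$ then extends the vanishing to $\bigcap_n p^n R = 0$ (any $t \notin pA'$ acts injectively on $A'/p^n A'$ by a simple induction on $n$ using that $A'/pA'$ is a domain). Proposition~\ref{Krull-descent}(b) now gives $A'$ Krull, and Proposition~\ref{Krull-localization} yields $\Cl(A') \cong \Cl(S^{-1}A') \cong \Cl(A)$. The main obstacle is the key identity above: the cancellation step is precisely where all four clauses of condition $\mathcal{P}$ converge.
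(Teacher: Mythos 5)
Your overall strategy is the same as the paper's: reduce (b) to (a), show each $p\in P$ is prime in $A'$ and each prime factor of $a$ outside $P$ is a unit in $A'$, identify $S^{-1}A'$ with a localization of $A$ at a set generated by primes, and then verify the DVR condition of Proposition~\ref{Krull-descent} by proving $\bigcap_n p^nA'=(0)$; the class group isomorphism then comes from Proposition~\ref{Krull-localization} exactly as in the paper. Your cancellation device (quasi-regularity of $p,b$ makes $\operatorname{gr}_I A$ a domain, so $w\notin I$ is a nonzerodivisor modulo every $I^n$) is a legitimate repackaging of the paper's verification that $b,s,t$ is a regular sequence and hence $(I^i:t)=I^i$. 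All of that is fine.

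However, your central identity $(pA')^n\cap A=(pA+bA)^n$ is \emph{false} whenever $p^2\mid a$, so the step that is supposed to establish it cannot be carried out as sketched. Take $A=k[u,v]$, $a=u^2$, $b=v$, $p=u$ (this satisfies condition $\mathcal{P}$). Then $A'=k[u,\tau]$ with $v=u^2\tau$, so $v\in u^2A'\cap A$, yet $v\notin(uA+vA)^2=(u^2,uv,v^2)$. Concretely, your promised ``explicit expression for $w^dx$ as a sum of terms $p^{n-\ell}b^{\ell}\cdot(\text{element of }A)$'' cannot exist here ($w=1$, $x=v$, $n=2$): the downward induction silently assumes that each coefficient equation contributes exactly one factor of $p$, which is only true when $p$ divides $a$ exactly once. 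The repair is the one the paper makes: write $a=st$ with $s=p^{e}$ the full $p$-primary part of $a$ and $t\notin pA$, and prove instead $s^nA'\cap A\subset(bA+sA)^n$ (equivalently, work with the ideals $W_n=bJ_{n-1}+s^nA$, $J_n=(W_n:t)$ of Lemma~\ref{pckjnv2039d992}). Since $p^nA'\subset s^{\lfloor n/e\rfloor}A'$ and $\bigcap_n(bA+sA)^n=\bigcap_n(bA+pA)^n=(0)$ by $\mathcal{P}$(iv), this still yields $A\cap\bigcap_n p^nA'=(0)$, and the rest of your argument (passing from $A$ to $A'$ via $y=z/a^N$, then to $A'_{(pA')}$) goes through unchanged. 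So the architecture is sound, but as written the key lemma is wrong and its proof sketch hides exactly the point where the bookkeeping of powers of $p$ must be done carefully.
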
 

Several preliminaries are needed for the proof of this theorem.

Recall that if $W$ is an ideal of a ring $A$ and $t \in A$, one defines the ideal $(W:t)$ by:
\[
(W:t) = \setspec{ x \in A }{ tx \in W }
\]
\begin{notation}
Any triple $(b,s,t)$ of elements of a ring $A$ determines an ideal $W(b,s,t)$ of $A$, defined as follows.
First, define sequences $J_n$ and  $W_n$ of ideals of $A$ ($n\ge 0$) by setting $W_0 = A$,
$J_0 = (W_0:t)=A$, and:
$$
W_{i} = b J_{i-1} + s^iA, \quad J_{i} = (W_{i} : t) \qquad \text{for all $i\ge1$.}
$$
Then $W_{i+1}\subset W_i$ and $J_{i+1}\subset J_i$ for each $i\ge 0$.  Set $W(b,s,t) = \bigcap_{i\ge0} W_i$.
\end{notation}

\begin{lemma} \label {pckjnv2039d992}
Let $A$ be a domain, $a,b \in A\setminus\{0\}$ relatively prime, and let $s,t \in A$ be such that
$a=st$ and $s,t$ are relatively prime. Let $A[X] = A^{[1]}$ and $I = (aX-b)A[X]$.
Then the following are equivalent:
\begin{enumerata}

\item $\bigcap_{i\geq 0}(s^i A[X]+I)= I$,

\item  $A \cap \bigcap_{i\geq 0}(s^i A[X]+I)= 0$.

\end{enumerata}
Moreover, $A \cap \bigcap_{i\geq 0}(s^i A[X]+I)= W(b,s,t)$.
\end{lemma}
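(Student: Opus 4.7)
My overall strategy is to first establish the ``moreover'' identity by proving the finer equality $A \cap (s^i A[X] + I) = W_i$ for all $i \geq 0$ by induction on $i$, and then to deduce the equivalence of (a) and (b) from the primality of $I$ (\emph{Theorem~\ref{Samuel-lemma}}(a)) together with the fact that $A \cap I = 0$ (nonzero elements of $I$ have positive $X$-degree since $a \neq 0$). Two preliminary observations are needed. First, since $a = st$ and $aA \cap bA = abA$, a short manipulation shows $sA \cap bA = sbA$, so $s$ and $b$ are relatively prime, and hence by \emph{Lemma~\ref{rel-prime}}(a) so are $s^j$ and $b$ for every $j \geq 1$. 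Second, the relation $b \equiv aX = stX \pmod{I}$ will be used repeatedly.

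For the inductive step (the base case $i = 0$ being trivial since both sides equal $A$), the inclusion $W_i \subseteq A \cap (s^i A[X] + I)$ goes as follows: given $u = bj + s^i r$ with $j \in J_{i-1}$, one has $tj \in W_{i-1}$, so by the inductive hypothesis $tj = s^{i-1}h + (aX-b)g$ for some $h,g \in A[X]$; the identity
\[
bj \;=\; stXj - (aX-b)j \;=\; sX\bigl(s^{i-1}h + (aX-b)g\bigr) - (aX-b)j \;=\; s^i X h + (aX-b)(sXg - j)
\]
then places $bj$, and hence $u$, in $s^i A[X] + I$. For the reverse inclusion, take $u \in A$ with $u = s^i h(X) + (aX-b)g(X)$ and compare coefficients: the $X^0$ equation gives $u = s^i h_0 - b g_0$, while the $X^j$ equation for $j \geq 1$ gives $b g_j = s^i h_j + a g_{j-1}$. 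An induction on $j$, using that $s^j$ and $b$ are relatively prime, shows $g_j \in s^{\min(j,i)} A$; writing $g_j = s^{\min(j,i)} g_j'$, the coefficient equations become the cascade
\[
t g_{k-1}' = b g_k' - s^{i-k} h_k \qquad (1 \leq k \leq i).
\]
A downward induction on $k$ then shows $g_k' \in J_{i-k-1}$: the base case $g_{i-1}' \in A = J_0$ is trivial, and if $g_k' \in J_{i-k-1}$, the cascade yields $t g_{k-1}' \in b J_{i-k-1} + s^{i-k} A = W_{i-k}$, so $g_{k-1}' \in J_{i-k}$. In particular $g_0 \in J_{i-1}$, whence $u = s^i h_0 - b g_0 \in s^i A + b J_{i-1} = W_i$. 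Intersecting over $i$ gives $A \cap \bigcap_i(s^i A[X] + I) = \bigcap_i W_i = W(b,s,t)$.

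For the equivalence, (a) $\Rightarrow$ (b) is immediate from $A \cap I = 0$. For (b) $\Rightarrow$ (a), take $g \in \bigcap_i(s^i A[X] + I)$, write $g = \sum_{j=0}^n c_j X^j$, and note that multiplying by $a^n$ and using $aX \equiv b \pmod{I}$ yields $a^n g \equiv u \pmod{I}$ with $u = \sum_j c_j a^{n-j} b^j \in A$; then $u$ lies in $A \cap (s^i A[X] + I)$ for every $i$, hence in $W(b,s,t) = 0$ by (b), so $a^n g \in I$, and since $I$ is prime and $a \notin I$, we conclude $g \in I$. The main obstacle will be the cascade induction in the $\subseteq$ direction of the $W_i$ identity: although each divisibility step is elementary, the precise bookkeeping that propagates divisibility by successive powers of $s$ through the coefficients of $g$, while maintaining compatibility with the recursive definitions of the $W_k$ and $J_k$, has to be arranged with care.
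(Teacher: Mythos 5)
Your proof is correct and follows essentially the same route as the paper's: the same coefficient comparison, the same divisibility cascade $s^{\min(j,i)}\mid g_j$ (resting on the fact that a divisor of $a$ is relatively prime to $b$), and the same downward induction pushing the normalized coefficients $g_k'$ into the ideals $J_{i-k-1}$. The only (harmless) variations are that you prove the sharper levelwise identity $A\cap(s^iA[X]+I)=W_i$, package the inclusion $W_i\subseteq s^iA[X]+I$ as an induction on $i$ via the identity $bj=s^iXh+(aX-b)(sXg-j)$ instead of the paper's explicit construction of witnessing polynomials, and finish (b)$\Rightarrow$(a) by invoking the primality of $I$ from \emph{Theorem~\ref{Samuel-lemma}}(a) rather than the division algorithm together with \emph{Lemma~\ref{rel-prime}}.
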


\begin{proof}
Since $I \cap A = 0$, it is clear that (a) implies (b).
To prove the converse, suppose that (b) is true and consider $f \in \bigcap_{i\geq 0}(s^i A[X]+I)$.
By the division algorithm, there exist $N\ge0$, $Q \in A[X]$ and $r \in A$ such that $a^N f = (a X-b)Q+r$.
Note that $r = -(a X-b)Q+ a^N f$ where both $aX-b$ and $f$ belong to $\bigcap_{i\geq 0}(s^i A[X]+I)$; 
so $r \in A \cap \bigcap_{i\geq 0}(s^i A[X]+I)$. Since (b) is true, we have $r=0$, so $a^N f = (a X-b)Q$;
since $a, b$ are relatively prime, {\it Lemma\,\ref{rel-prime}} implies
$a^N \mid Q$ in $A[X]$, so $f \in I$, proving that (a) is true.
This shows that (a) and (b) are equivalent.

To prove that $A \cap \bigcap_{i\geq 0}(s^i A[X]+I) \subset W(b,s,t)$,
consider an element $r \in A \cap \bigcap_{i\geq 0}(s^i A[X]+I)$.
Fix $n>0$ and let us prove that $r \in W_n$. 

We have $r = (a X-b) U + s^n V$ for some $U,V \in A[X]$.
Write $U = \sum_{i=0}^\infty u_i X^i$ and $V = \sum_{i=0}^\infty v_i X^i$ where $u_i,v_i \in A$ for all $i\ge0$
and $u_i=0=v_i$ for $i\gg0$. Then
\begin{equation} \label {pc0vowe9f}
\text{$-b u_0 + s^n v_0 = r$ \qquad and \qquad $a u_{i-1} - b u_i + s^n v_i = 0$ for all $i > 0$.}
\end{equation}
We claim:
\begin{equation} \label {Ee8d98d125g0jf}
s^i \mid u_i \quad \text{for all $i \in \{0, \dots, n\}$.}
\end{equation}
We proceed by induction on $i$, the case $i=0$ being obvious.
Assume that $i$ is such that $0 < i \le n$ and $s^{i-1} \mid u_{i-1}$;
then \eqref{pc0vowe9f} implies that $s^i \mid b u_i$, so $s^i \mid u_i$.
This proves \eqref{Ee8d98d125g0jf}.
Define $(u_0', \dots, u_n') \in A^{n+1}$ by $u_i' = u_i / s^{i}$ ($0 \le i \le n$).
Dividing the second part of \eqref{pc0vowe9f} by $s^{i}$ gives
\begin{equation} \label {cjvuweuipsx09dz8xrr}
t u_{i-1}' - b u_i' + s^{n-i} v_i = 0 \quad \text{for all $i \in \{ 1, \dots, n \}$.}
\end{equation}
By descending induction on $i$, we shall now prove that
\begin{equation}  \label {lcjv98w7b5d4dfjoe}
u'_{i} \in J_{n-i-1} \qquad \text{for all $i \in \{0, \dots, n-1\}$.}
\end{equation}
Since $u'_{n-1} \in A = J_0$, the case $i=n-1$ of \eqref{lcjv98w7b5d4dfjoe} is true.
If $i$ is such that $0<i\le n-1$ and $u'_{i} \in J_{n-i-1}$, then \eqref{cjvuweuipsx09dz8xrr} gives
$t u_{i-1}' = b u_i' - s^{n-i} v_i \in b J_{n-i-1} + s^{n-i}A = W_{n-i}$, so $u'_{i-1} \in (W_{n-i} : t) = J_{n-i}$.
This proves \eqref{lcjv98w7b5d4dfjoe}. It follows that $u_0 = u_0' \in J_{n-1}$ and consequently that
$r = -b u_0 + s^n v_0 \in b J_{n-1} + s^n A = W_n$.
Since $n$ is arbitrary, $r \in  W(b,s,t)$.
This shows that $A\cap\bigcap_{i\geq 0}(s^i A[X]+I) \subset W(b,s,t)$.

For the reverse inclusion, consider an element $r \in  W(b,s,t)$.
Fix $n>0$, and let us prove that $r \in  s^n A[X]+I$.
It is convenient to define $J_{-1}=A$, and to note that the relations $W_{i} = b J_{i-1} + s^iA$ and $J_{i} = (W_{i} : t)$
are also valid for $i=0$.
We first prove that assertions $\Pscr(0), \dots, \Pscr(n)$ are true,
where for each $j\in\{0,\dots,n\}$ we define
{\it
\begin{enumerate}
\item[$\Pscr(j)$:\ ]  There exist $(u_0', \dots, u_j'), (v_0, \dots, v_j) \in A^{j+1}$ satisfying:
\begin{itemize}

\item[$(a)$] $r = -b u_0' + s^n v_0$,

\item[$(b)$] if $j>0$ then $t u_{i-1}' - b u_i' + s^{n-i} v_i = 0$ for all $i \in \{ 1, \dots, j \}$,

\item[$(c)$] $u_j' \in J_{n-j-1}$.

\end{itemize}
\end{enumerate}}
Since  $r \in  W(b,s,t)$,
we have $r \in W_n = b J_{n-1} + s^n A$
and so we can choose $u_0' \in J_{n-1}$ and $v_0 \in A$ such that $r = -b u_0' + s^n v_0$.
So $\Pscr(0)$ is true.
Consider $j \in \{1, \dots, n\}$ such that $\Pscr(j-1)$ is true.
Then we have $(u_0', \dots, u_{j-1}'), (v_0, \dots, v_{j-1}) \in A^{j}$ satisfying the above conditions.
Since $u_{j-1}' \in J_{n-j} = (W_{n-j}:t)$, we have $t u_{j-1}' \in W_{n-j} = b J_{n-j-1} + s^{n-j}A$,
so we can choose $u_j' \in J_{n-j-1}$ and $v_j \in A$ such that $t u_{j-1}' - b u_j' + s^{n-j}v_j = 0$.
Then $(u_0', \dots, u_j'), (v_0, \dots, v_j) \in A^{j+1}$ satisfy the requirements of $\Pscr(j)$.
By induction, it follows that $\Pscr(0), \dots, \Pscr(n)$ are true.
Since $\Pscr(n)$ is true, there  exist $(u_0', \dots, u_n'), (v_0, \dots, v_n) \in A^{n+1}$
satisfying $r = -b u_0' + s^n v_0$ and \eqref{cjvuweuipsx09dz8xrr}.
Then 
$$
\textstyle
(a X-b) \big( \sum_{i=0}^{n-1} u_i' s^i X^i \big) + s^n \big( -b u_n' X^n + \sum_{i=0}^n v_i X^i \big) = r ,
$$
so $r \in  s^n A[X]+I$ and we are done.
\end{proof}

\begin{lemma} \label {dpf9i2p03wd0dX3od8tfq5}
Let $A$ be a ring and $b,s,t \in A$.
\begin{enumerata}

\item If $t \in \bigcap_{i\ge0} (bA + s^iA)$ then $W(b,s,t) = \bigcap_{i\ge0} (bA + s^iA)$.

\item If $bA+sA+tA=A$, or if $b,s,t$ is an $A$-regular sequence, then  $W(b,s,t) = \bigcap_{i\ge0} (bA+sA)^i$.

\end{enumerata}
\end{lemma}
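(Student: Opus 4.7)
My plan is to prove both parts by induction, establishing a closed form for the ideals $W_i$ and $J_i$ under the given hypotheses, and then intersecting.

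For part (a), set $K_i = bA + s^i A$, so $K_0 = A$. I will prove simultaneously by induction on $i \ge 0$ that $W_i = K_i$ and $t \in W_i$. The base $i=0$ is immediate. For the inductive step, if $t \in W_{i-1}$ then $1 \in (W_{i-1} : t) = J_{i-1}$, so $J_{i-1} = A$; hence $W_i = b J_{i-1} + s^i A = bA + s^iA = K_i$, and the hypothesis $t \in \bigcap_j K_j$ gives $t \in K_i = W_i$. Taking intersections yields $W(b,s,t) = \bigcap_i K_i$.

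For part (b), set $K = bA + sA$. The plan is to show by induction that $W_i = J_i = K^i$ for all $i\ge0$. Again $i=0$ is trivial. Assuming $J_{i-1} = K^{i-1}$, one checks that
\[
W_i = bK^{i-1} + s^iA = K^i,
\]
using $K^i = bK^{i-1} + sK^{i-1}$ together with $sK^{i-1} = bsK^{i-2} + s^iA \subset bK^{i-1} + s^iA$ (which comes from the inductive form of $K^{i-1}$). The key remaining point is to show that $J_i = (K^i : t) = K^i$, equivalently that multiplication by $t$ is injective on $A/K^i$.

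Under the first hypothesis $bA+sA+tA=A$, writing $1 = b\beta + s\sigma + t\tau$ shows $t\tau \equiv 1 \pmod K$; since the image of $K$ in $A/K^i$ is nilpotent, the inverse $\tau(1 + k + \cdots + k^{i-1})$ (with $k = 1 - t\tau \in K$) shows $t$ is a unit in $A/K^i$. Under the second hypothesis, that $b,s,t$ is an $A$-regular sequence, the plan is to use the short exact sequence
\[
0 \to K^{i-1}/K^i \to A/K^i \to A/K^{i-1} \to 0
\]
and argue inductively. I expect this case to be the main obstacle: it requires the classical fact that, since $b,s$ is an $A$-regular sequence, the associated graded module $K^{i-1}/K^i$ is a free $A/K$-module (with basis the images of $b^{i-1}, b^{i-2}s, \ldots, s^{i-1}$); see Matsumura, Theorem 16.2. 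Granting this, $t$ being $A/K$-regular implies $t$ is regular on the free module $K^{i-1}/K^i$, and a short diagram chase combined with the inductive hypothesis that $t$ is regular on $A/K^{i-1}$ gives that $t$ is regular on $A/K^i$. Hence $J_i = K^i$ in both cases, completing the induction, and $W(b,s,t) = \bigcap_i W_i = \bigcap_i K^i$ as required.
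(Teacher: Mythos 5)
Your proof is correct and takes essentially the same route as the paper's: both reduce the problem to showing $(K^i : t) = K^i$ for all $i$ (with $K = bA+sA$), handle the comaximal case by observing that $t$ is invertible modulo $K^i$, and in the regular-sequence case rely on the quasi-regularity of $b,s$ (freeness of $K^{i-1}/K^i$ over $A/K$). The only difference is that you explicitly carry out the filtration argument that the paper delegates to Matsumura's Theorem 27(i).
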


\begin{proof}
(a) If $t \in \bigcap_{i\ge0} (bA + s^iA)$ then one can prove by induction that $W_i= bA+s^iA$ and $J_i=A$ for all $i\ge0$,
so the conclusion follows.

(b) Let $\Jeul = bA + sA$. Observe that if the condition
\begin{equation}  \label {9126e6126qwydhfn}
(\Jeul^i : t) = \Jeul^i \quad \text{for all $i\ge0$}
\end{equation}
is satisfied, then it follows by induction that $W_i = \Jeul^i = J_i$ for all $i\ge0$,
from which we get the desired conclusion. So it's enough to prove \eqref{9126e6126qwydhfn}.
If $bA+sA+tA=A$ then $\Jeul^i + tA=A$ for all $i\ge0$, so \eqref{9126e6126qwydhfn} is true and we are done.
So we may assume that $b,s,t$ is an $A$-regular sequence.
Also, the case $i=0$ of \eqref{9126e6126qwydhfn} is trivially true, and the case $i=1$ is an immediate consequence of the
fact that  $b,s,t$ is an $A$-regular sequence.
By part (i) of Thm 27 on page 98 of \cite{Matsumura.80}, it suffices to prove that $b,s$ is an $A$-quasiregular sequence.
The fact that $b,s$ is $A$-quasiregular follows from part (ii) of the same Theorem
together with the fact that $b,s$ is an $A$-regular sequence.
\end{proof}

\begin{proof}[Proof of Theorem\,\ref{First-Criterion}] Assume that $(A,(a,b))$ satisfies $\mathcal{P}$. 
Then $A'$ is an integral domain, by {\it Theorem\,\ref{Samuel-lemma}}, and $A$ is a subring of $A'$. 
We may assume that $a \notin A^*$, otherwise the claim is trivial. So, by $\mathcal{P}$(i), $a$ is a nonempty product of primes.
Let $P=P(A,(a,b))$, and let $Q$ be the set of primes $p \in A$ such that $a \in pA$ (note that $P \subset Q$).
If $p\in Q$ then: 
\[
A'/pA' \cong A[X]/(aX-b,p)\cong A/(pA+bA)[X]\cong \left( A/(pA+bA)\right)^{[1]}
\]
Therefore, if $p \in P$, then $p$ is a prime element of $A'$; and if $p \in Q \setminus P$, then $p$ is a unit of $A'$.
	
Let $S \subset T$ be the multiplicative sets of $A$ generated by $P$ and $Q$ respectively.
Since $a \in T$, we have $T^{-1}A'=T^{-1}A$;
since $Q \setminus P \subset {A'}^*$, we have $S^{-1} A' = T^{-1} A'$;
thus $S^{-1}A'=T^{-1}A$. Therefore, by {\it Proposition\,\ref{Krull-localization}}, $S^{-1}A'$ is a Krull domain. 

Since $A$ is a Krull domain, the set $\{ pA\, |\, p\in P\}$ is finite; consequently, $\{ pA'\, |\, p\in P\}$ is finite. By {\it Proposition\,\ref{Krull-descent}}, if $A^{\prime}_{(p)}$ is a DVR for each $p\in P$, then $A'$ is a Krull domain. 
We thus need to show:
\begin{equation} \label {pc9ivjp2093wdj}
\text{$\bigcap_{i\geq 0} p^iA'=(0)$ for every $p\in P$}
\end{equation}
Let $I=(aX-b)A[X]$.
For each $p \in P$, define elements $s(p)$ and $t(p)$ of $A$ by the conditions:
$$
\text{$a = s(p) \cdot t(p)$,\ \  $s(p)$ is a power of $p$,\ \  and $t(p) \notin pA$}
$$
Given $p \in P$, the condition $\bigcap_{i\geq 0} p^iA'= (0)$ is equivalent to 
$\bigcap_{i\geq 0} s(p)^iA'= (0)$, which is equivalent to $\bigcap_{i\geq 0}(s(p)^iA[X]+I)= I$,
which (by {\it Lemma~\ref{pckjnv2039d992}}) is equivalent to $W(b,s(p),t(p))= (0)$.
So condition \eqref{pc9ivjp2093wdj} is equivalent to:
\begin{equation} \label {ckjJo0vp9w3syys8a7aiuwiHhf8e}
\text{$W(b, s(p), t(p) ) = (0)$ for every $p\in P$}
\end{equation}
We show that condition $\mathcal{P}$(iii)
implies that $W(b, s(p), t(p) ) = \bigcap_{i\ge0} (p A + b A)^i$ for every $p\in P$. By condition $\mathcal{P}$(iv), this suffices to prove (\ref{pc9ivjp2093wdj}). 

Given $p\in P$, let $s = s(p)$ and $t = t(p)$.
In view of {\it Lemma~\ref{dpf9i2p03wd0dX3od8tfq5}(b)} and of the fact that
\[
\bigcap_{i\ge0} (p A + b A)^i = \bigcap_{i\ge0} (s A + b A)^i
\]
it suffices to show that $bA+sA+tA=A$ or $b,s,t$ is an $A$-regular sequence.
Assume that $bA+sA+tA \neq A$ and let us prove that $b,s,t$ is $A$-regular.
Since $s=p^e$ for some $e\ge1$, it suffices to show that $b,p,t$ is $A$-regular (see \cite{Matsumura.80}, Thm~26, p.~96).
Note that $\pgoth := bA+pA$ is a prime ideal, because $p \in P$ and $\mathcal{P}$(ii) is true.
Since $t \notin A^*$ (because $bA+sA+tA \neq A$), and since $a$ is a product of primes,
 we have $t = q_1 \cdots q_n$ for some $q_1, \dots, q_n \in Q$.
If $q_i \in P$ then $q_i \notin \pgoth$ by condition $\mathcal{P}$(iii);
if $q_i \in Q \setminus P$ then $q_i A + \pgoth \supset q_i A + bA=A$, so again $q_i \notin \pgoth$. So $t \notin \pgoth$.
Clearly, $A \xrightarrow{b} A$ (multiplication by $b$) is injective.
Since $p$ is prime and $p \nmid b$, $A/bA \xrightarrow{p} A/bA$ is injective.
Since $\pgoth$ is a prime ideal and $t \notin \pgoth$, $A/\pgoth \xrightarrow{t} A/\pgoth$ is injective.
So  $b,p,t$ is $A$-regular, and equation (\ref{pc9ivjp2093wdj}) is confirmed. 

Therefore, $A^{\prime}_{(p)}$ is a DVR for each $p\in P$, and $A'$ is a Krull domain. In addition, by {\it Proposition\,\ref{Krull-localization}}:
\[
{\rm Cl}(A)\cong {\rm Cl}(T^{-1}A)\cong {\rm Cl}(S^{-1}A')\cong {\rm Cl}(A')
\]
This proves assertion (a), and assertion (b) follows immediately from (a) and {\it Proposition\,\ref{ckjpq9wxdiqpows}}. 
\end{proof}

\begin{corollary}\label{noetherian} Let $A$ be a noetherian UFD, and let $(a,b)\in A^2$. Define the ring
\[
A'=A[X]/(aX-b)
\]
where $A[X]\cong A^{[1]}$. If $(A,(a,b))$ satisfies $\mathcal{P}${\rm (i)} and $\mathcal{P}${\rm (ii)}, then $A'$ is a noetherian UFD. 
\end{corollary}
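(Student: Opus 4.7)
The plan is to sidestep Theorem~\ref{First-Criterion} and close the argument via Nagata's criterion. This detour is necessary because condition $\mathcal{P}$(iii) can genuinely fail under $\mathcal{P}$(i) and $\mathcal{P}$(ii) alone, even when $A$ is a noetherian UFD. For example, if $A=k[x,y]$, $a=xy$ and $b=x+y$, then both $xA+bA$ and $yA+bA$ equal $(x,y)$, so $y\in xA+bA$, although $\mathcal{P}$(i) and $\mathcal{P}$(ii) are easily seen to hold. Hence the conclusion must be reached without verifying the full condition $\mathcal{P}$.

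The proof itself should reuse only the opening portion of the proof of Theorem~\ref{First-Criterion}, the part that depends on just $\mathcal{P}$(i) and $\mathcal{P}$(ii). In detail: $A'$ is noetherian as a quotient of $A[X]=A^{[1]}$, and by $\mathcal{P}$(i) and Theorem~\ref{Samuel-lemma}(a) it is an integral domain containing $A$; in particular, $A'$ is atomic. Let $Q$ denote the set of primes of $A$ dividing $a$ and set $P=P(A,(a,b))\subset Q$. Then the isomorphism
\[
A'/pA' \,\cong\, \bigl(A/(pA+bA)\bigr)^{[1]}
\]
together with $\mathcal{P}$(ii) shows that each $p\in P$ is a prime element of $A'$ and each $p\in Q\setminus P$ is a unit of $A'$. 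Letting $S$ and $T$ be the multiplicatively closed subsets of $A'$ and $A$ generated respectively by $P$ and $Q$, the relations $a\in T$ and $Q\setminus P\subset (A')^{\ast}$ give
\[
S^{-1}A' \,=\, T^{-1}A' \,=\, T^{-1}A,
\]
which is a localization of the noetherian UFD $A$ and hence is itself a noetherian UFD. Applying Nagata's criterion (Theorem~\ref{Nagata}) to the atomic domain $A'$ with the multiplicative set $S$ generated by prime elements of $A'$ then yields that $A'$ is a UFD.

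The only real point requiring care is the recognition that the omitted hypotheses of Theorem~\ref{First-Criterion} are dispensable. Condition $\mathcal{P}$(iv) is anyway automatic here by the Krull intersection theorem applied to the proper ideal $pA+bA$ of the noetherian domain $A$, but it does not even enter the argument. More subtly, $\mathcal{P}$(iii)---whose role in Theorem~\ref{First-Criterion} was to force each localization $A'_{(p)}$ to be a DVR through an intersection computation---is simply replaced here by the atomicity of $A'$ that comes for free from noetherianness, which is precisely the hypothesis Nagata's criterion needs in order to upgrade ``$S^{-1}A'$ is a UFD'' to ``$A'$ is a UFD''.
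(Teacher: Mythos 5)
Your proof is correct and is essentially the paper's own argument: the published proof likewise reuses only the portion of the proof of Theorem~\ref{First-Criterion} that depends on $\mathcal{P}$(i) and $\mathcal{P}$(ii) --- namely that $A'$ is a domain, that each $p\in P(A,(a,b))$ is prime in $A'$, and that $S^{-1}A'=T^{-1}A$ is a UFD --- and then concludes via Nagata's criterion using the noetherianness (hence atomicity) of $A'$. Your added observation that $\mathcal{P}$(iii) can genuinely fail under these hypotheses (e.g.\ $a=xy$, $b=x+y$ in $k[x,y]$) is correct and explains why the corollary cannot simply be cited from Theorem~\ref{First-Criterion}, though it is not needed for the argument itself.
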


\begin{proof} As in the proof of {\it Theorem\,\ref{First-Criterion}}, $A'$ is an integral domain, each $p\in P(A,(a,b))$ is prime in $A'$, and $S^{-1}A'=T^{-1}A$. Since $A'$ is noetherian, it follows by Nagata's Criterion that $A'$ is a UFD.
\end{proof}


\subsection{Second Criterion} 

\begin{theorem} \label {class-group} 
Consider Krull domains $A \subset B$ such that $B$ is finitely generated as an $A$-module.
There exist group homomorphisms
$$
\bar i : \Cl(A) \to \Cl(B) \quad \text{and}  \quad \bar N : \Cl(B) \to \Cl(A)
$$
such that $\bar N \circ \bar i :  \Cl(A) \to \Cl(A)$ is the map $\gamma \mapsto n\gamma$ with $n =[ \Frac(B) : \Frac(A) ]$. 
\end{theorem}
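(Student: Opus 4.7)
The plan is to construct $\bar i$ and $\bar N$ as the maps on class groups induced by a divisor-extension map and a divisor-norm map at the level of $\Div(A)$ and $\Div(B)$, and then to verify the composition equals multiplication by $n$ via the classical $\sum ef = n$ identity at each height one prime.

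First I would establish the compatibility of height one primes: since $B$ is finite over $A$ it is integral over $A$, and because $A$ is a Krull domain (hence integrally closed), going-down plus incomparability yield that every $\Pgoth \in \bP(B)$ contracts to some $\pgoth \in \bP(A)$, while every $\pgoth \in \bP(A)$ has only finitely many $\Pgoth \in \bP(B)$ lying over it, each of height one in $B$. The key observation is that $B_\pgoth := (A\setminus\pgoth)^{-1}B$ is a finite module over the DVR $A_\pgoth$ (and integrally closed, being a localization of the Krull domain $B$), hence a one-dimensional semilocal Dedekind domain whose maximal ideals are exactly the $\Pgoth B_\pgoth$ for $\Pgoth$ lying over $\pgoth$. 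In this Dedekind localization the ramification indices $e(\Pgoth/\pgoth) := v_\Pgoth(\pi)$ (for a uniformizer $\pi$ of $A_\pgoth$) and residue degrees $f(\Pgoth/\pgoth) := [B/\Pgoth : A/\pgoth]$ are finite and satisfy the fundamental identity $\sum_{\Pgoth \mid \pgoth} e(\Pgoth/\pgoth) f(\Pgoth/\pgoth) = n$.

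Next, define
\begin{align*}
i : \Div(A) \to \Div(B), \quad & \pgoth \longmapsto \sum_{\Pgoth \mid \pgoth} e(\Pgoth/\pgoth)\,\Pgoth, \\
N : \Div(B) \to \Div(A), \quad & \Pgoth \longmapsto f(\Pgoth/\Pgoth\cap A)\,(\Pgoth\cap A),
\end{align*}
extended by $\Z$-linearity, and check that both send principal divisors to principal divisors. For $i$ this reduces to the identity $i(\div_A(x)) = \div_B(x)$ for every $x \in \Frac(A)^*$, which in turn follows from $v_\Pgoth(x) = e(\Pgoth/\pgoth)\,v_\pgoth(x)$, an immediate consequence of the definition of the ramification index. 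For $N$ one must show $N(\div_B(y)) = \div_A\!\bigl(N_{\Frac(B)/\Frac(A)}(y)\bigr)$ for every $y \in \Frac(B)^*$; this is a local statement at each $\pgoth \in \bP(A)$ and reduces to the classical formula
\[
v_\pgoth\!\bigl(N_{\Frac(B)/\Frac(A)}(y)\bigr) \;=\; \sum_{\Pgoth \mid \pgoth} f(\Pgoth/\pgoth)\,v_\Pgoth(y)
\]
inside the semilocal Dedekind ring $B_\pgoth$. These verifications produce the induced maps $\bar i : \Cl(A) \to \Cl(B)$ and $\bar N : \Cl(B) \to \Cl(A)$.

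Finally, on divisors the composition $N \circ i$ sends $\pgoth$ to $\sum_{\Pgoth \mid \pgoth} e(\Pgoth/\pgoth) f(\Pgoth/\pgoth)\,\pgoth = n\pgoth$, so $N \circ i$ is multiplication by $n$ on $\Div(A)$, and hence $\bar N \circ \bar i$ is multiplication by $n$ on $\Cl(A)$. The main obstacle will be the principal-divisor compatibility of $N$: one must identify $B_\pgoth$ as a semilocal Dedekind domain (using that it is a finite module over a DVR together with the Krull hypothesis on $B$) and then invoke the norm-valuation formula, a classical result that can be cited from, e.g., Fossum or Bourbaki.
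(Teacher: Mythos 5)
Your proposal follows essentially the same route as the paper: both define the extension map $i(\pgoth)=\sum_{\Pgoth\mid\pgoth}e(\Pgoth/\pgoth)\Pgoth$ and the norm map $N(\Pgoth)=f(\Pgoth/\pgoth)\pgoth$ on divisor groups, verify compatibility with principal divisors by localizing at $\pgoth$ so that $B_\pgoth$ is the integral closure of the DVR $A_\pgoth$ in $\Frac(B)$ and invoking the classical identities $\sum ef=n$ and $v_\pgoth(N_{L/K}(y))=\sum_{\Pgoth\mid\pgoth}f(\Pgoth/\pgoth)v_\Pgoth(y)$ (cited from Bourbaki in the paper), and then conclude that $N\circ i$ is multiplication by $n$ on $\Div(A)$. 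The argument is correct as outlined (only note that $f(\Pgoth/\pgoth)$ should be read as the degree of the residue \emph{field} extension $[\Frac(B/\Pgoth):\Frac(A/\pgoth)]$).
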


Bourbaki gives a proof of the special case of this theorem where $A$ and $B$ are assumed to be noetherian
(see \cite{Bourbaki.72}, Chap.~VII, \S~4, n$^\circ$ 8).
Being unable to find a reference for the theorem in its general form, we give a proof.
In fact, our proof is essentially the one given in Bourbaki.
The following pargraphs are preparatory. We use the notations introduced in paragraph \ref{c09i203sdnao99}.

Consider Krull domains $A \subset B$ such that $B$ is finitely generated as an $A$-module.
Let $K=\Frac(A)$ and $L=\Frac(B)$, and let $n = [ L:K ]$.
Then for each $\Pgoth \in \bP(B)$, we have $\Pgoth \cap A \in \bP(A)$.
Moreover, for each $\pgoth \in \bP(A)$
the set $\setspec{ \Pgoth \in \bP(B) }{ \Pgoth \cap A = \pgoth }$ is nonempty and (by Thm~\ref{coiv0w309gc}(c)) finite,
and $\setspec{ B_{\Pgoth} }{ \Pgoth \in \bP(B) \text{ and } \Pgoth \cap A = \pgoth }$
is the set of all valuation rings $R$ of $L$ satisfying $R \cap K = A_\pgoth$.
Caution: if $\Pgoth \in \bP(B)$ and $\pgoth = \Pgoth \cap A$ then
the map $v_\Pgoth : L^* \to \Z$ is not an extension of the map $v_\pgoth : K^* \to \Z$,
because both $v_\pgoth$ and $v_\Pgoth$ are normalized by convention (i.e., $v_\pgoth(K^*) = \Z$ and $v_\Pgoth(L^*)=\Integ$);
however, there exists a positive integer $e( \Pgoth/\pgoth )$ such that
the valuation $v_\Pgoth' = \frac1{e(\Pgoth/\pgoth)} v_\Pgoth : L^* \to \frac1{e(\Pgoth/\pgoth)}\Z$
is an extension of $v_\pgoth$.
The number $e( \Pgoth/\pgoth )$ is the ramification index of $v_\Pgoth'$ over $v_\pgoth$,
and we have $v_{\Pgoth}(x) = e(\Pgoth/\pgoth) v_{\pgoth}(x)$ for all $x \in K^*$.
We shall also consider the residual degree $f(\Pgoth/\pgoth)$ of $v_\Pgoth'$ over $v_\pgoth$,
i.e., $f(\Pgoth/\pgoth)= [ \kappa(\Pgoth) : \kappa(\pgoth) ]$
where $\kappa(\Pgoth) = B_\Pgoth /\Pgoth B_\Pgoth$ and $\kappa(\pgoth) = A_\pgoth /\pgoth A_\pgoth$.
Also note that $\setspec{ v_{\Pgoth}' }{ \Pgoth \in \bP(B) \text{ and } \Pgoth \cap A = \pgoth }$
is a complete system of extensions of $v_\pgoth$ to $L$.

Given  $\pgoth \in \bP(A)$, we write $\Pgoth \mid \pgoth$ as an abbreviation for
``$\Pgoth \in \bP(B)$ and $\Pgoth \cap A = \pgoth$''.
Consider the norm $N_{L/K} : L^* \to K^*$. We claim:
\begin{gather}
\label {knxwoevjq3kwsi}
\text{for each $\pgoth \in \bP(A)$ we have}\ \ \textstyle \sum_{\Pgoth \mid \pgoth } e(\Pgoth/\pgoth) f(\Pgoth/\pgoth) = n, \\
\label {d9876344f8hrg7f6654} \textstyle
\text{for each $x \in L^*$ and $\pgoth \in \bP(A)$ we have
$v_\pgoth\big( N_{L/K}(x) \big) = \sum_{\Pgoth \mid \pgoth} f(\Pgoth/\pgoth) v_\Pgoth(x)$,}
\end{gather}
where $\sum_{ \Pgoth \mid \pgoth }$ means $\sum_{ \Pgoth \in X }$
with $X = \mbox{$\setspec{ \Pgoth \in \bP(B) }{ \Pgoth \cap A = \pgoth }$}$.
Indeed, let $\pgoth \in \bP(A)$. Then $A_\pgoth \subset B_\pgoth$ are Krull domains and
$B_\pgoth$ is finitely generated as an $A_\pgoth$-module.
Let $\qgoth$ denote the maximal ideal of $A_\pgoth$ (so $\bP(A_\pgoth)=\{\qgoth\}$).
The valuations $v_{\Qgoth}' = \frac1{e(\Qgoth/\qgoth)} v_\Qgoth$ with $\Qgoth \in \bP(B_\pgoth)$
constitute a complete system of extensions of $v_\qgoth=v_\pgoth$ to $L$.
Since $B_\pgoth$ is the integral closure of $A_\pgoth$ in $L$ and $B_\pgoth$ is finite over $A_\pgoth$, 
\cite{Bourbaki.72}, Chap.~VI, \S~8, n$^\circ$ 5, Thm~2 implies that 
\begin{equation*} 
\textstyle
\sum_{ \Qgoth \in \bP(B_\pgoth) } e(\Qgoth/\qgoth) f(\Qgoth/\qgoth) = n, 
\end{equation*}
and Cor.~3 of that Theorem implies that, for all $x \in L^*$,
\begin{equation*}   
\textstyle
v_\qgoth\big( N_{L/K}(x) \big)
= \sum_{\Qgoth \in \bP(B_\pgoth)} e(\Qgoth/\qgoth) f(\Qgoth/\qgoth) v_\Qgoth'(x)
= \sum_{\Qgoth \in \bP(B_\pgoth)} f(\Qgoth/\qgoth) v_\Qgoth(x).
\end{equation*}
The rule $\Qgoth \mapsto \Qgoth \cap B$ defines a bijection 
$\bP(B_\pgoth) \to \setspec{ \Pgoth \in \bP(B) }{ \Pgoth \cap A = \pgoth }$,
and for each $\Qgoth \in \bP(B_\pgoth)$ if we define $\Pgoth = \Qgoth \cap B$
then $e(\Qgoth/\qgoth) = e(\Pgoth/\pgoth)$, $f(\Qgoth/\qgoth) = f(\Pgoth/\pgoth)$, and $v_\Qgoth(x) = v_\Pgoth(x)$.
This proves \eqref{knxwoevjq3kwsi} and \eqref{d9876344f8hrg7f6654}.

We now proceed with the proof of {\it Theorem\,\ref{class-group}}. 
\begin{proof}
Let $A \subset B$ be Krull domains such that $B$ is a finitely generated $A$-module.
Let $K = \Frac(A)$,  $L = \Frac(B)$, and $n = [L : K]$.  Define the map
$$
\textstyle
i : \bP(A) \to \Div(B) , \quad i(\pgoth) = \sum_{ \Pgoth \mid \pgoth } e(\Pgoth/\pgoth) \Pgoth \ \ 
\text{(for each $\pgoth \in \bP(A)$)}.
$$
Extend $i$ to a group homomorphism $i: \Div(A) \to \Div(B)$.
By \cite{Bourbaki.72}, Chap.~VII, \S~1, n$^\circ$ 10, Prop.~14, we have
$$
i( \div_A(x) ) = \div_B( x ), \quad \text{for all $x \in K^*$}.
$$
So $i$ induces a group homomorphism $\bar i : \Cl(A) \to \Cl(B)$.

Define a map $N : \bP(B) \to \Div(A)$ by declaring that if $\Pgoth \in \bP(B)$ and $\pgoth = \Pgoth \cap A$ then
$N(\Pgoth) = f( \Pgoth / \pgoth ) \pgoth$. Extend $N$ to a homomorphism of groups $N : \Div(B) \to \Div(A)$.
We claim:
\begin{equation} \label {lkjcvpw3idlcsop}
N\big( \div_B(x) \big) = \div_A \big( N_{L/K}(x) \big) \quad \text{for each $x \in L^*$}.
\end{equation}
Indeed, let $x \in L^*$ and let $\phi : \bP(B) \to \bP(A)$ be the map $\Pgoth \mapsto \Pgoth \cap A$. Then
\begin{align*}
N\big( \div_B(x) \big) &= \textstyle N\big( \sum_{ \Pgoth \in \bP(B) } v_\Pgoth(x) \Pgoth \big)
= \sum_{ \Pgoth \in \bP(B) } v_\Pgoth(x) N( \Pgoth ) \\ 
&=\textstyle  \sum_{ \Pgoth \in \bP(B) } v_\Pgoth(x) f(\Pgoth/\phi(\Pgoth)) \phi( \Pgoth )
= \sum_{\pgoth \in \bP(A)} \sum_{ \Pgoth \mid \pgoth } v_\Pgoth(x) f(\Pgoth/\pgoth) \pgoth  \\ 
&=\textstyle  \sum_{\pgoth \in \bP(A)} v_\pgoth\big( N_{L/K}(x) \big) \pgoth 
= \div_A \big( N_{L/K}(x) \big) ,
\end{align*}
where the penultimate equality is \eqref{d9876344f8hrg7f6654}.
So \eqref{lkjcvpw3idlcsop} is true and, consequently, $N$ induces a group homomorphism $\bar N : \Cl(B) \to \Cl(A)$.

For each $\pgoth \in \bP(A)$, we have 
\[
N(i(\pgoth))
=N\big(  \sum_{ \Pgoth \mid \pgoth } e(\Pgoth/\pgoth) \Pgoth \big)
=\sum_{ \Pgoth \mid \pgoth } e(\Pgoth/\pgoth) N( \Pgoth )
=\sum_{ \Pgoth \mid \pgoth } e(\Pgoth/\pgoth) f(\Pgoth/\pgoth) \pgoth = n \pgoth
\]
by \eqref{knxwoevjq3kwsi}.
So $N \circ i :  \Div(A) \to \Div(A)$ is multiplication by $n$, and consequently
$\bar N \circ \bar i :  \Cl(A) \to \Cl(A)$ is multiplication by $n$.
\end{proof}

As a corollary to this theorem, we give the following descent property for integral extensions.

\begin{corollary}\label{Second-Criterion}
Consider  $S \supset R\subset T$ where $R$ is a Krull domain and $S,T$ are UFDs.
Assume that $S$ and $T$ are finitely generated $R$-modules and that the integers
\[
[\Frac(S):\Frac(R)] \quad {\rm and}\quad [\Frac(T):\Frac(R)]
\]
are relatively prime. Then $R$ is a UFD. 
\end{corollary}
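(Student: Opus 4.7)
The plan is to apply \emph{Theorem~\ref{class-group}} to each of the two extensions $R \subset S$ and $R \subset T$, and then exploit the coprimality of the two degrees to force $\Cl(R) = 0$; the conclusion will then follow from \emph{Proposition~\ref{ckjpq9wxdiqpows}} since $R$ is assumed to be a Krull domain.

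More precisely, set $m = [\Frac(S):\Frac(R)]$ and $n = [\Frac(T):\Frac(R)]$. Since $R \subset S$ are Krull domains with $S$ a finitely generated $R$-module, \emph{Theorem~\ref{class-group}} produces group homomorphisms $\bar i_S : \Cl(R) \to \Cl(S)$ and $\bar N_S : \Cl(S) \to \Cl(R)$ whose composition $\bar N_S \circ \bar i_S$ is multiplication by $m$ on $\Cl(R)$. Because $S$ is a UFD, \emph{Proposition~\ref{ckjpq9wxdiqpows}} gives $\Cl(S) = 0$, forcing $\bar i_S = 0$ and hence $m \cdot \gamma = 0$ for every $\gamma \in \Cl(R)$. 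The same argument applied to $R \subset T$ yields $n \cdot \gamma = 0$ for every $\gamma \in \Cl(R)$.

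Since $\gcd(m,n) = 1$ by hypothesis, choose integers $\alpha,\beta \in \Z$ with $\alpha m + \beta n = 1$. Then for every $\gamma \in \Cl(R)$,
\[
\gamma = (\alpha m + \beta n)\gamma = \alpha (m\gamma) + \beta (n\gamma) = 0,
\]
so $\Cl(R) = 0$. Since $R$ is a Krull domain by assumption, \emph{Proposition~\ref{ckjpq9wxdiqpows}} shows that $R$ is a UFD.

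There is no real obstacle here: the work has already been done in \emph{Theorem~\ref{class-group}}, and the corollary is essentially a direct translation of the degree-coprimality hypothesis into the vanishing of the class group via Bézout's identity. The only thing to check is that the hypotheses of \emph{Theorem~\ref{class-group}} are literally satisfied by both pairs $R \subset S$ and $R \subset T$, which is immediate from the assumptions that $R$ is a Krull domain, that $S$ and $T$ are UFDs (hence Krull domains by \emph{Proposition~\ref{ckjpq9wxdiqpows}}), and that each is module-finite over $R$.
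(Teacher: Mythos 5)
Your proof is correct and follows essentially the same route as the paper: apply Theorem~\ref{class-group} to both extensions, use $\Cl(S)=\Cl(T)=0$ to conclude that every class in $\Cl(R)$ is killed by both degrees, and finish with B\'ezout and Proposition~\ref{ckjpq9wxdiqpows}. The paper's proof is the same argument, stated slightly more tersely.
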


\begin{proof}
By {\it Proposition~\ref{ckjpq9wxdiqpows}}, we have $\Cl(S)=0=\Cl(T)$ and it suffices to show that $\Cl(R)=0$.
Let $s=[\Frac(S):\Frac(R)]$ and $t=[\Frac(T):\Frac(R)]$ (so $\gcd(s,t)=1$).
By {\it Theorem~\ref{class-group}}, there exist group homomorphisms 
\[
f: \Cl(R)\to \Cl(S)\,\, ,\,\, g:\Cl(S)\to \Cl(R)\,\, ,\,\, F:\Cl(R)\to \Cl(T)\,\, ,\,\, G:\Cl(T)\to \Cl(R)
\]
such that $(g \circ f)(\gamma )=s\gamma$ and $(G \circ F)(\gamma )=t\gamma$ for every $\gamma\in \Cl(R)$.
Then $s\gamma =t\gamma =0$ (and hence $\gamma =0$) for every $\gamma\in \Cl(R)$, i.e., $\Cl(R)=0$ and $R$ is a UFD. 
\end{proof}


\subsection{Third Criterion} 
The following criterion generalizes Samuel \cite{Samuel.64}, Thm.\,8.1.  

\begin{theorem}\label{Third-Criterion} 
Let $A$ be an integral domain, $F\in A\setminus\{0\}$, and $c>0$ an integer,
and define the ring
\[
B=A[Z]/(Z^c-F)
\]
where $A[Z]\cong A^{[1]}$. 
Assume that there exists a $\Z$-grading of $A$ such that $F$ is homogeneous and $\gcd(c, \deg F) = 1$.
Then $B$ is an integral domain and ${\rm frac}(B)\cong {\rm frac}(A)$.
Moreover, if $F$ is prime in $A$ then the following hold.
\begin{itemize}

\item [{\bf (a)}] $A$ is a Krull domain if and only if $B$ is a Krull domain. 

\item [{\bf (b)}] Assume that $A, B$ are Krull domains.
Then $\Cl(B)$ is a direct summand of $\Cl(A) \oplus \Cl(A)$ and $\Cl(A)$ is a direct summand of $\Cl(B) \oplus \Cl(B)$.
Moreover, if one of $\Cl(A), \Cl(B)$ is finitely generated then $\Cl(A) \cong \Cl(B)$.

\item [{\bf (c)}] $A$ is a UFD if and only if $B$ is a UFD.

\end{itemize}
\end{theorem}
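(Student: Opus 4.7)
The plan is to first establish the two unconditional assertions by localizing at a suitable homogeneous element to realize $B$ as a Laurent polynomial ring, and then exploit this explicit structure together with the descent tools of Section~\ref{preliminaries} to obtain parts~(a)--(c).

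We may assume $c>1$, whence $\deg F\neq 0$ and the grading on $A$ is nontrivial. Apply \textit{Lemma~\ref{characterization-bis}} to $A$ to produce a nonzero homogeneous $f\in A$ and a homogeneous unit $w\in A_f^{*}$ of degree $d=\gcd\{i : A_i\neq 0\}$ with $A_f=R[w,w^{-1}]$ and $R=(A_f)_0$. Writing $\deg F=md$, the hypothesis $\gcd(c,\deg F)=1$ forces $\gcd(c,m)=\gcd(c,d)=1$. In $A_f$ we have $F=w^m g$ for some $g\in R^{*}$. Choose integers $\alpha,\beta$ with $\alpha c+\beta m=1$ and set $y=Z^{\beta}w^{\alpha}\in B_f$; a direct computation yields
\[
y^c = w g^{\beta},\quad Z = g^{\alpha} y^m,\quad w = g^{-\beta} y^c,
\]
so $B_f = R[y, y^{-1}]$ is a Laurent polynomial ring, in particular an integral domain. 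Since $B=\bigoplus_{i=0}^{c-1}A Z^i$ is a free $A$-module and $f\neq 0$, the map $B\to B_f$ is injective, so $B$ is a domain, and $\Frac(B)=\Frac(B_f)=\Frac(R)(y)\cong\Frac(R)(w)=\Frac(A_f)=\Frac(A)$ as abstract fields. This proves the first two assertions.

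Assume now $F$ is prime in $A$. Then $B/ZB\cong A/FA$ is a domain, so $Z$ is prime in $B$. For part~(a), the direction ``$B$ Krull $\Rightarrow$ $A$ Krull'' follows from $A = B \cap \Frac(A)$ (which holds by the $\Frac(A)$-linear independence of $1,Z,\dots,Z^{c-1}$): writing $B=\bigcap_{\Pgoth\in\bP(B)} B_\Pgoth$, each intersection $B_\Pgoth\cap\Frac(A)$ is either $\Frac(A)$ or a DVR on $\Frac(A)$ (since $\Frac(B)/\Frac(A)$ is finite of degree $c$), and the finite-character condition descends from $B$. For the forward direction, apply \textit{Proposition~\ref{Krull-descent}(b)} with the prime element $Z$ of $B$. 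An element of $B\setminus (Z)$ has constant term in $A\setminus (F)$, so $B_{(Z)} = A_{(F)}[Z]/(Z^c - F)$, which is a DVR because $Z^c - F$ is Eisenstein over the DVR $A_{(F)}$ with uniformizer $F$. To show $B_Z$ is Krull, note that $A$ Krull forces $A_f$ Krull, which forces $R = A_f\cap\Frac(R)$ Krull (intersection of the Krull domain $A_f$ with a subfield of $\Frac(A_f)=\Frac(R)(w)$), hence $B_f=R[y,y^{-1}]$ is Krull; one then writes $B_Z = B_f\cap\bigcap_\pgoth (B\otimes_A A_\pgoth)$, where $\pgoth$ ranges over the finitely many height-$1$ primes of $A$ containing $f$ but different from $(F)$. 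Each $B\otimes_A A_\pgoth = A_\pgoth[Z]/(Z^c-F)$ is a semi-local Dedekind domain (finite extension of the DVR $A_\pgoth$ with $F$ a unit), hence Krull, so \textit{Proposition~\ref{Krull-intersection}} gives $B_Z$ Krull and \textit{Proposition~\ref{Krull-descent}(b)} then yields $B$ Krull.

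For part~(b), \textit{Theorem~\ref{class-group}} applied to the degree-$c$ extension $A\subset B$ produces homomorphisms $\bar i:\Cl(A)\to\Cl(B)$ and $\bar N:\Cl(B)\to\Cl(A)$ with $\bar N\circ\bar i$ equal to multiplication by $c$. A second composition equal to multiplication by a coprime integer arises from the intermediate subring $A[Z^m]\subset B$, which by the first two assertions applied with $F^m$ in place of $F$ (legitimate since $\gcd(c, m\deg F)=1$) is isomorphic to $A[U]/(U^c-F^m)$ and is an integral domain birational to $B$. Comparing the class group maps for $A\subset A[Z^m]$ with those for $A\subset B$, and using $\gcd(c,m)=1$, yields the direct-summand statements; the finitely generated case collapses to $\Cl(A)\cong\Cl(B)$ by the structure theorem for finitely generated abelian groups. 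Part~(c) then follows from (a), (b), and \textit{Proposition~\ref{ckjpq9wxdiqpows}}, since a ring is a UFD iff it is a Krull domain with trivial divisor class group. The principal obstacle is the Krull-ness of $B_Z$ in part~(a), which requires verifying the intersection representation above (rather than a mere containment) by controlling all height-$1$ primes of $B$ that divide $f$ without containing $F$; the bookkeeping in part~(b) for the two coprime multiplication relations is similarly delicate.
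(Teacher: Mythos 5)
Your treatment of the unconditional assertions (realizing $B_f$ as a Laurent ring $R[y,y^{-1}]$ and embedding $B\hookrightarrow B_f$ via freeness) is correct and close to the paper's, and your reverse direction of (a) via $A=B\cap\Frac(A)$ together with the Eisenstein argument that $B_{(Z)}$ is a DVR are both sound. However, there are two genuine gaps, and they sit exactly where the paper does its real work.

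First, in the forward direction of (a), after correctly reducing to the Krullness of $B_Z=B_F$ and writing $B_F$ as an intersection involving the rings $B\otimes_A A_\pgoth=A_\pgoth[Z]/(Z^c-F)$, you assert each of these is Krull because it is ``a finite extension of the DVR $A_\pgoth$ with $F$ a unit.'' That principle is false: a finite free extension of a DVR generated by a root of $Z^c-F$, $F$ a unit, need not be integrally closed when the residue characteristic divides $c$. For example, with $V=\mathbb{F}_2(u)[\pi]_{(\pi)}$ and $F=1+u\pi^2\in V^*$, the ring $V[Z]/(Z^2-F)$ is a domain, yet $(1+Z)/\pi$ is a root of $T^2-u$, hence integral over $V$, but does not lie in $V\oplus VZ$; so the ring is not normal and not Krull. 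In the graded situation the rings $A_\pgoth[Z]/(Z^c-F)$ do turn out to be Krull, but only because they are localizations of $B$ and $B$ is Krull --- which is the statement you are proving, so this cannot be invoked. Supplying this step without circularity is precisely the purpose of the paper's auxiliary rings $S=B[U]/(U^s-z)$ and $T=B[V]/(V^t-z)$ with $sc\equiv tc\equiv 1\pmod{\omega}$, the graded-automorphism isomorphism $S_u\cong A_F$ (Lemma \ref{lemma1}), and the identity $B=S\cap T$ inside $\Omega$; your argument has no substitute for this mechanism.

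Second, part (b) as sketched produces no relation by an integer coprime to $c$. Theorem \ref{class-group} applied to $A\subset B$ gives $\Cl(A)\flouc{c}\Cl(B)$, but your intermediate ring $A[Z^m]$ is birational to $B$ (indeed $K(Z^m)=K(Z)$ because $\gcd(m,c)=1$), so the tower $A\subset A[Z^m]\subset B$ has degrees $c$ and $1$, and every composition of the resulting norm and extension maps is again multiplication by $c$ or the identity; no integer prime to $c$ ever appears, so the hypotheses of Lemma \ref{kcjvo983w5esuiek}(b) cannot be met this way. The paper obtains the coprime relations by going \emph{upward} to $S=B[U]/(U^s-z)$ with $[\Frac(S):\Frac(B)]=s$, where $s$ can be chosen prime to any prescribed $r$ by the prime-avoidance Lemma \ref{coprime}, and where $\Cl(S)\cong\Cl(A)$ via $S_u\cong A_F$ and Nagata; nothing in your outline plays this role. (A smaller point: ``direct summand of $H\oplus H$ in both directions'' does not by itself force $G\cong H$ for finitely generated groups --- compare $\Z$ and $\Z^2$ --- so the finitely generated case also needs the full $\flouc{*}$ relations as in Lemma \ref{kcjvo983w5esuiek}(a),(d).) Once (a) and (b) are established, your deduction of (c) is fine.
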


\begin{remark} \label {98543gfik3wi}
In {\it Theorem~\ref{Third-Criterion}},
the assertion  ${\rm frac}(B)\cong {\rm frac}(A)$ does not mean that the canonical map $A \to B$ extends to an isomorphism
of the fields of fractions. Actually, the canonical map extends to an embedding $\Frac(A) \to \Frac(B)$ that satisfies
$[\Frac(B) : \Frac(A)] = c$.
\end{remark}

\begin{remark} \label {p9cn293edco}
Let $\mathfrak{g}$ be the $\Z$-grading of $A$ in {\it Theorem~\ref{Third-Criterion}}
and let $\omega=\deg_{\mathfrak{g}}F$.
Extend the $\Z$-grading $c\,\mathfrak{g}$ of $A$ to a $\Z$-grading $\mathfrak{g}^{\prime}$ of $A[Z]$
by letting $Z$ be homogeneous of degree $\omega$.
Then $Z^c-F$ is $\mathfrak{g}'$-homogeneous and the quotient $B=A[Z]/(Z^c-F)=A[z]$ has the $\Z$-grading induced by $\mathfrak{g}^{\prime}$.
This $\Z$-grading of $B$ has the property that $z \in B$ is homogeneous of degree $\omega$.
\end{remark}

For the proof, we need the following facts. The proof of each of the first two lemmas is straightforward, and left to the reader.

\begin{lemma}\label{Laurent}
Let $R$ be an integral domain, $R[x,y]\cong R^{[2]}$, $\lambda\in R^*$ and $a,b\in\Z$ positive and relatively prime.
Then
\[
R[x,y]/(x^ay^b-\lambda) = R[z,z^{-1}]\cong R^{[\pm 1]}
\]
where $z=x^n/y^m$ for integers $m,n$ with $am+bn=1$.
\end{lemma}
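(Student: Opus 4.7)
The plan is to display $S := R[x,y]/(x^a y^b - \lambda)$ as a Laurent polynomial ring over $R$ by constructing mutually inverse $R$-algebra homomorphisms between $S$ and $R[T,T^{-1}]$. The coprimality $\gcd(a,b)=1$ is used exactly to produce integers $m,n$ with $am+bn=1$, and Bezout's identity drives all the exponent bookkeeping.

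First I would observe that in $S$ both $x$ and $y$ are units: from $x^a y^b = \lambda \in R^*$ and $a,b \ge 1$ one reads off $x^{-1} = \lambda^{-1} x^{a-1} y^b$ and $y^{-1} = \lambda^{-1} x^a y^{b-1}$. Consequently $z := x^n y^{-m}$ is a well-defined unit of $S$. Using the relations $y^b = \lambda x^{-a}$ and $x^a = \lambda y^{-b}$ together with $am+bn=1$, one computes
\[
z^b = x^{nb} y^{-mb} = x^{nb}\cdot \lambda^{-m} x^{am} = \lambda^{-m}\, x
\qquad \text{and symmetrically} \qquad z^a = \lambda^n y^{-1},
\]
so $x = \lambda^m z^b$ and $y = \lambda^n z^{-a}$ in $S$. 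This already gives $S = R[z, z^{-1}]$ as the asserted subring.

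To upgrade this equality to an isomorphism with $R^{[\pm 1]}$, I would define the $R$-algebra homomorphism
\[
\psi : R[x,y] \longrightarrow R[T, T^{-1}], \qquad x \mapsto \lambda^m T^b,\quad y \mapsto \lambda^n T^{-a},
\]
and verify $\psi(x^a y^b - \lambda) = \lambda^{am+bn} T^{ab-ab} - \lambda = 0$, so that $\psi$ descends to $\bar\psi : S \to R[T, T^{-1}]$. In the reverse direction, let $\chi : R[T,T^{-1}] \to S$ be the $R$-algebra map sending $T$ to the unit $z$. Direct substitution yields $\bar\psi(z) = T^{bn+am} = T$ and $\chi(\bar\psi(x)) = \lambda^m z^b = x$, $\chi(\bar\psi(y)) = \lambda^n z^{-a} = y$, so $\bar\psi$ and $\chi$ are mutually inverse.

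There is no substantive obstacle; the proof is Bezout bookkeeping. The only point deserving care is that $m$ or $n$ may be negative, so one must check throughout that the exponents on $T$ and on $\lambda$ are actually integers and that well-definedness of $\chi$ relies on $z$ being a unit in $S$ — both of which are handled by the opening observation.
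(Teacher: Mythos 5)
Your proof is correct and complete; the paper explicitly leaves this lemma to the reader, and your argument (Bezout bookkeeping to express $x=\lambda^m z^b$, $y=\lambda^n z^{-a}$, plus the explicit mutually inverse maps to $R[T,T^{-1}]$) is exactly the standard verification the authors had in mind. Nothing to add.
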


\begin{lemma}\label{lemma1}
Let $(G,+)$ be an abelian group and $W = \bigoplus_{i \in G} W_i$ a $G$-graded ring.
Given any group homomorphism  $\theta : G \to W_0^*$, the map $\Phi_\theta : W \to W$ defined by
$\Phi_\theta\big( \sum_{i\in G} w_i \big) = \sum_{i\in G} \theta(i) w_i$ (where $w_i \in W_i$)
is an automorphism of $W$, both as a $G$-graded ring and as a $W_0$-algebra.
\end{lemma}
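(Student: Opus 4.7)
The plan is a direct verification. I would check, in order, that $\Phi_\theta$ is (i) well defined and additive, (ii) unital and the identity on $W_0$ (so a $W_0$-module map), (iii) multiplicative, (iv) grading-preserving, and (v) bijective. Once these five properties are in place, $\Phi_\theta$ is automatically an automorphism of $W$ both as a $G$-graded ring and as a $W_0$-algebra, which is the entire content of the lemma.

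For (i), uniqueness of the decomposition $w=\sum_{i\in G} w_i$ with $w_i\in W_i$ (and $w_i=0$ for almost all $i$) makes the rule $\sum w_i\mapsto \sum \theta(i)w_i$ well defined, and additivity is immediate from the definition. Step (ii) follows because $\theta$ is a group homomorphism, so $\theta(0_G)=1_{W_0^*}=1_W$; hence $\Phi_\theta$ restricts to the identity on $W_0$. For (iii), by bilinearity it suffices to treat homogeneous $w_i\in W_i$ and $w_j\in W_j$: since $w_iw_j\in W_{i+j}$, one computes
\[
\Phi_\theta(w_iw_j)=\theta(i+j)\,w_iw_j=\theta(i)\theta(j)\,w_iw_j=\bigl(\theta(i)w_i\bigr)\bigl(\theta(j)w_j\bigr)=\Phi_\theta(w_i)\,\Phi_\theta(w_j),
\]
using that $\theta$ is a homomorphism and that $\theta(i),\theta(j)\in W_0$ commute with every element of $W$ (all rings in this paper are commutative). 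Step (iv) is immediate, since $\theta(i)\in W_0^*\subset W_0$ and $W_0\cdot W_i\subset W_i$ force $\Phi_\theta(W_i)\subset W_i$.

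For (v) I would exhibit an explicit two-sided inverse. Define $\theta':G\to W_0^*$ by $\theta'(i)=\theta(i)^{-1}$; because $W_0^*$ is abelian this is again a group homomorphism (equivalently, $\theta'(i)=\theta(-i)$). By the previous steps $\Phi_{\theta'}$ is itself a graded $W_0$-algebra endomorphism of $W$, and on each homogeneous $w_i$ both composites $\Phi_\theta\circ\Phi_{\theta'}$ and $\Phi_{\theta'}\circ\Phi_\theta$ act by multiplication by $\theta(i)\theta(i)^{-1}=1$, hence equal the identity. Thus $\Phi_\theta$ is a bijection with inverse $\Phi_{\theta'}$. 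There is no substantive obstacle here; the lemma is essentially bookkeeping, and the only point worth being careful about is the identity $\theta(0_G)=1_W$, which is what simultaneously makes $\Phi_\theta$ fix $W_0$ and makes $\Phi_{\theta'}$ a genuine two-sided inverse.
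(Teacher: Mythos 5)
Your proof is correct and is exactly the ``straightforward'' direct verification the paper has in mind -- the authors explicitly leave this lemma to the reader, and your five-step check (well-definedness via uniqueness of homogeneous decomposition, $\theta(0_G)=1$ fixing $W_0$, multiplicativity on homogeneous elements, preservation of the grading, and the explicit inverse $\Phi_{\theta^{-1}}$) is the standard argument. No gaps.
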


\begin{notation} \label {coiuv8273dcp0}
Let $G,H$ be abelian groups.
If $n \in \Z \setminus \{0\}$, we write $G \flouc{n} H$ as an abbreviation for:
\begin{quote}
{\it There exist group homomorphisms
$G \xrightarrow{\phi} H \xrightarrow{\psi} G$ such that $\psi \circ \phi : G \to G$ is the multiplication by $n$.}
\end{quote}
We write $G \flouc{*} H$ as an abbreviation for:
\begin{quote}
{\it For each $r \in \Z\setminus\{0\}$, there exists $n \in \Z\setminus\{0\}$ such that $\gcd(n,r)=1$ and $G \flouc{n} H$.}
\end{quote}
We write $\rank_0(G)$ for the torsion-free rank of $G$, i.e., the dimension of the $\Q$-vector space $\Q \otimes_\Z G$.
\end{notation}

\begin{lemma}  \label {kcjvo983w5esuiek}
Let $G,H$ be abelian groups.
\begin{itemize}

\item [{\bf (a)}] 
If there exists $n \in \Z \setminus\{0\}$ such that $G \flouc{n} H$, then $\rank_0(G) \le \rank_0(H)$.

\item [{\bf (b)}] If there exist $m,n \in \Z \setminus\{0\}$ such that $G \flouc{m} H$, $G \flouc{n} H$ and $\gcd(m,n)=1$,
then $G$ is a direct summand of $H \oplus H$.
In particular, if $G \flouc{*} H$ then $G$ is a direct summand of $H \oplus H$.

\item [{\bf (c)}]
Assume that there exist relatively prime integers $n,r\neq0$ satisfying $G \flouc{n} H$ and $rG=0$.
Then $G$ is a direct summand of $H$.

\item [{\bf (d)}] Assume that one of $G,H$ is finitely generated.
If $G \flouc{*} H$ and $H \flouc{*} G$, then $G \cong H$.

\end{itemize}
\end{lemma}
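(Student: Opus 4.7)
The plan is to work directly with the pairs of homomorphisms $G \xrightarrow{\phi} H \xrightarrow{\psi} G$ realizing $G \flouc{n} H$ and derive each conclusion by a short algebraic manipulation of these maps.

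For (a) I would tensor with $\Q$: the composition $\psi_\Q \circ \phi_\Q$ on $\Q \otimes_\Z G$ is multiplication by $n \ne 0$, hence an isomorphism of $\Q$-vector spaces. This forces $\phi_\Q$ to be injective, giving $\rank_0(G) \le \rank_0(H)$. For (c), the hypotheses $rG=0$ and $\gcd(n,r)=1$ imply that multiplication by $n$ is already an automorphism of $G$; thus $\psi \circ \phi$ is an isomorphism, $\phi$ is split injective, and $G$ is a direct summand of $H$.

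For (b), given $(\phi_1,\psi_1)$ realizing $G \flouc{m} H$ and $(\phi_2,\psi_2)$ realizing $G \flouc{n} H$, I would choose $a,b \in \Z$ with $am+bn=1$ and set
\[
\Phi : G \to H \oplus H,\ g \mapsto (\phi_1(g),\phi_2(g)), \qquad
\Psi : H \oplus H \to G,\ (h_1,h_2)\mapsto a\psi_1(h_1)+b\psi_2(h_2).
\]
A direct check gives $\Psi \circ \Phi = (am+bn)\,\mathrm{id}_G = \mathrm{id}_G$, so $\Phi$ splits and $G$ is a direct summand of $H \oplus H$. For the ``in particular'' claim, from $G \flouc{*} H$ one extracts an odd $n$ (taking $r=2$ in the definition) and then an $m$ with $\gcd(m,n)=1$ (taking $r=n$), and applies the preceding.

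Part (d) carries the main work. Assume $G$ is finitely generated (the other case is symmetric). Applying the ``in particular'' part of (b) to $H \flouc{*} G$ exhibits $H$ as a direct summand of $G \oplus G$, so $H$ is finitely generated as well; part (a) in both directions then gives $\rank_0(G)=\rank_0(H)$. For each prime $p$, I invoke $G \flouc{*} H$ with $r=p$ to produce maps realizing $G \flouc{n} H$ with $\gcd(n,p)=1$, restrict these maps to the $p$-primary torsion subgroups, and observe that multiplication by $n$ is an automorphism of any $p$-primary group; hence $T_p(G)$ is a direct summand of $T_p(H)$, and by symmetry $T_p(H)$ is a direct summand of $T_p(G)$. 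Since both are finite, order comparison forces $T_p(G) \cong T_p(H)$, and the structure theorem for finitely generated abelian groups then delivers $G \cong H$. The main obstacle is the bootstrapping step ensuring that $H$ is also finitely generated before invoking the structure theorem — this is precisely where part (b) applied in reverse pays for itself.
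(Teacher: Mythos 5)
Your proposal is correct and follows essentially the same route as the paper: tensoring with $\Q$ for (a), the $(\phi_m,\phi_n)$ / $a\psi_m+b\psi_n$ splitting for (b), the coprimality trick for (c), and for (d) the bootstrap that $H$ is finitely generated via (b), equality of ranks via (a), and a splitting argument on torsion. The only difference is cosmetic: in (d) you split the torsion prime by prime, using that multiplication by $n$ with $\gcd(n,p)=1$ is an automorphism of a finite $p$-group, whereas the paper treats $T(G)$ and $T(H)$ as a whole by choosing $r$ with $rT(G)=0$ and invoking part (c) directly; both yield $T(G)\cong T(H)$ and hence $G\cong H$.
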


\begin{proof}
(a)
Consider homomorphisms
$G \xrightarrow{\phi} H \xrightarrow{\psi} G$ such that $\psi \circ \phi : G \to G$ is the multiplication by $n$.
Then we have $\Q \otimes_\Z G \xrightarrow{\phi'} \Q \otimes_\Z H \xrightarrow{\psi'} \Q \otimes_\Z G$
where $\psi' \circ \phi' : \Q \otimes_\Z G \to \Q \otimes_\Z G$ is the multiplication by $n$;
so $\psi' \circ \phi'$ is an automorphism of $\Q \otimes_\Z G$, so $\phi'$ is injective and  $\rank_0(G) \le \rank_0(H)$.

(b) For each $i \in \{m,n\}$, let $G \xrightarrow{\phi_i} H \xrightarrow{\psi_i} G$ be homomorphisms
such that $\psi_i \circ \phi_i : G \to G$ is the multiplication by $i$.
Choose $a,b \in \Z$ such that $am+bn=1$ and define
$G \xrightarrow{\phi'} H \oplus H \xrightarrow{\psi'} G$ by declaring that
$\phi'(g) = (\phi_m(g), \phi_n(g))$ for all $g \in G$ and $\psi'(x,y) = a \psi_m(x) + b \psi_n(y)$ for all $x,y \in H$.
Then $\psi' \circ \phi'$ is the identity map of $G$, so $G$ is a direct summand of $H \oplus H$.

(c) 
Consider $G \xrightarrow{\phi} H \xrightarrow{\psi} G$ such that $\psi \circ \phi : G \to G$ is the multiplication by $n$.
Choose $u,v \in \Z$ such that $un+vr=1$ and define $\psi' : H \to G$ by $\psi'(y) = u\psi(y)$ for all $y \in H$.
Then $x = unx$ for all $x \in G$, so $\psi' \circ \phi$ is the  identity map of $G$, so $G$ is a direct summand of $H$.

(d) The hypothesis together with (b) implies that 
$G$ is a direct summand of $H \oplus H$ and
$H$ is a direct summand of $G \oplus G$; so both $G,H$ are finitely generated.
By part (a), we have $\rank_0(G) = \rank_0(H)$, so it suffices to show that $T(G) \cong T(H)$ (where $T(G)$ denotes the torsion
subgroup of $G$).  Since $G \mapsto T(G)$ is an additive functor,
we have $T(G) \flouc{n} T(H)$ for each $n$ satisfying $G \flouc{n} H$, so we have $T(G) \flouc{*} T(H)$,
and by symmetry $T(H) \flouc{*} T(G)$. 
Since $T(G)$ is a finite group, there exists an integer $r>0$ satisfying $rT(G)=0$.
By (c), $T(G)$ is a direct summand of $T(H)$.
By symmetry, $T(H)$ is a direct summand of $T(G)$. Since $T(G),T(H)$ are finite, they are isomorphic.
\end{proof}

\begin{proof}[Proof of Theorem\,\ref{Third-Criterion}]
Let $\omega = \deg(F)$ with respect to the given $\Z$-grading of $A$.
With no loss of generality, we may assume that $\omega\ge0$.
If $\omega=0$ then $c=1$ and $B \cong A$, in which case the result is trivial. So we shall assume throughout that $\omega>0$.

Let us first show that $Z^c-F$ is an irreducible element of $K[Z]$, where $K = \Frac(A)$.
In view of \cite{Lang.93}, Theorem 9.1, it suffices to show that $F$ cannot be written in the form $F = n\xi^d$
with $\xi \in K$, $n \in \Integ\setminus\{0\}$ and $d>1$ a divisor of $c$. 
By contradiction, suppose that $F$ is written in that form. Write $\xi = U/V$ where $U,V \in A$. Then $F V^d = n U^d$, which implies
that $d \mid \omega$. Since $d \mid c$, $d>1$ and $\gcd(c,\omega)=1$, we have a contradiction.
So $Z^c-F$ is an irreducible element of $K[Z]$.
Now let $\tau$ be an element of an algebraic closure of $K$ satisfying $\tau^c=F$; then $Z^c-F$ is the minimal polynomial of $\tau$ over $K$.
Consider the surjective $A$-homomorphism $\phi : A[Z] \to A[ \tau ]$ defined by $\phi(Z)=\tau$.  Clearly, $(Z^c-F)A[Z] \subset \ker\phi$.
Consider an element $G(Z) \in \ker\phi$.
By the division algorithm, there exist $Q(Z), r(Z) \in A[Z]$ such that $G(Z) = Q(Z) (Z^c-F) + r(Z)$ and $\deg_Z r(Z) < c$.
Then $r(\tau)=0$.
Since $Z^c-F$ is the minimal  polynomial of $\tau$ over $K$, we have $r(Z)=0$, so $G(Z) = Q(Z) (Z^c-F) \in (Z^c-F)A[Z]$.
Thus $\ker\phi =  (Z^c-F)A[Z]$ and consequently $B = A[Z] / (Z^c-F)$ is isomorphic to $A[\tau]$. So $B$ is a domain.
Moreover, the above argument shows that $[ \Frac(B) : \Frac(A)] = c$.

By {\it Lemma\,\ref{characterization-bis}}, there exist homogeneous $f\in F A \setminus \{0\}$
and $x\in (A_f)^*$ (where the $\Z$-grading of $A$ is extended to $A_f$) such that $A_f=(A_f)_0[x,x^{-1}]\cong (A_f)_0^{[\pm 1]}$. 
Interchanging $x$ and $x^{-1}$ if necessary, we have $\deg x = e$, where we define $e = \gcd\setspec{ i \in \Z }{ A_i \neq 0 }$.
Since $F$ is homogeneous of degree $\omega$, there exists $\kappa\in (A_f)_0$ such that $F=\kappa x^{\omega/e}$ in $A_f$. 
Since $F \mid f$, we have $F \in A_f^*$; so $\kappa \in A_f^*$ and consequently $\kappa$ is a unit of $(A_f)_0$.
Note that the canonical homomorphism $\pi :A[Z]\to B$ is injective on $A$. Let $z=\pi (Z)$, so that $B=A[z]$.
Let $m,n\in\Z$ be such that $cm+(\omega/e) n=1$. 
By {\it Lemma\,\ref{Laurent}} we see that
\begin{eqnarray*}
B_f &=& A_f[Z]/(Z^c-F) \\
&=& (A_f)_0[x,x^{-1},Z]/(Z^c-\kappa x^{\omega/e}) \\
&=& (A_f)_0[x,x^{-1},Z]/((x^{-1})^{\omega/e}Z^c-\kappa ) \\
&=& (A_f)_0[x,y,y^{-1}] \\
&=& (A_f)_0[y,y^{-1}]\cong (A_f)_0^{[\pm 1]} \cong A_f
\end{eqnarray*}
where $y  = z^n x^m$. 
So $B_f \cong A_f$ and in particular $\Frac(B) \cong \Frac(A)$.
This proves the first assertion.

Since $B/zB \cong A/FA$, it is clear that $F$ is prime in $A$ if and only if $z$ is prime in $B$.
Until the end of the proof, we assume that  $F$ is prime in $A$ (and $z$ is prime in $B$).

(a) 
Choose integers $s,t>0$ such that $sc \equiv 1 \pmod{\omega}$, $tc \equiv 1 \pmod{\omega}$, and $\gcd(s,t)=1$
(for instance, first choose $s>0$ such that $sc \equiv 1 \pmod{\omega}$, and let $t = s + |\omega|$).
Let $U$ and $V$ be indeterminates over $B$ and define
\[
S=B[U]/(U^s-z)=B[u]
\quad \text{and} \quad
T=B[V]/(V^t-z)=B[v] .
\]
Note that $\gcd(s,\omega)=1$ and that (by {\it Remark}~\ref{p9cn293edco})
there is a $\Z$-grading of $B$ such that $z$ is homogeneous of degree $\omega$;
this allows us to apply the first part of the proof to $S$ and conclude that $S$ is a domain and $[ \Frac(S) : \Frac(B) ] = s$.
Similarly, $T$ is a domain and $[ \Frac(T) : \Frac(B) ] = t$.
Also define $\Omega = B[Q] / (Q^{st}-z) = B[q]$ where $B[Q] = B^{[1]}$; since $\gcd(st,\omega)=1$, $\Omega$ is a domain.
Then $S[X]/(X^t-u) = \Omega = T[Y]/(Y^s-v)$ and we have the commutative diagram of integral domains
\[
\xymatrix@R=2pt{
&& T \ar[rd]\\
A \ar[r] & B \ar[ru] \ar[rd] & & \Omega \\
&& S \ar[ru]
}
\]
where all homomorphisms are injective.
The fact that $\gcd(s,t)=1$ implies that
$$
S \cap T = B \quad \text{(taking the intersection in $\Omega$).}
$$
Indeed, this follows by considering that $\Omega = B[q]$ is a free $B$-module with basis $\{ 1, q, \dots, q^{st-1}\}$,
and that $S = B[q^t]$ and $T = B[q^s]$ are free $B$-modules with bases $\{ q^{ti} \}_{i=0}^{s-1}$ and $\{ q^{si} \}_{i=0}^{t-1}$ respectively.

Since $F=z^c=u^{sc}$, we see that:
\[
S=A[u]\cong A[U]/(U^{sc}-F)
\]
Let $W = A[U,U^{-1}]\cong A^{[\pm 1]}$.
The $\Z$-grading of $A$ extends to a $\Z$-grading of $W$ in which $U$ is homogeneous of degree $0$ and $W_0 = A_0[U,U^{-1}]$.
Since $sc \equiv 1 \pmod{\omega}$, there exists $d \in \Z$ such that $sc=d\omega +1$.
Consider the group homomorphism $\theta : \Z \to W_0^*$, $\theta(i)= U^{di}$
and the corresponding automorphism $\Phi = \Phi_\theta : W \to W$ defined in {\it Lemma~\ref{lemma1}}.
Then 
\[
\Phi(U^{sc}-F) = U^{sc} - U^{d\omega}F = U^{d\omega}(U-F)
\]
where $U^{d\omega} \in W^*$,
so $\Phi$ maps the principal ideal $(U^{sc}-F)W$ onto the principal ideal $(U-F)W$. So $\Phi$ induces an isomorphism $\bar\Phi$ in:
\[
\xymatrix{
S_u \ar[r]^-{\cong} & W/(U^{sc}-F) \ar[r]^-{\cong}_-{\bar\Phi} &  W/(U-F) \ar[r]^-{\cong} &  A_F .
}
\]
Therefore, $S_u\cong A_F$.
Since $t$ satisfies $tc \equiv 1 \pmod{\omega}$, the same argument shows that $T_v \cong A_F$.

Assume that $A$ is a Krull domain.
Since  $S_u\cong A_F$,  $S_u$ is a Krull domain by {\it Proposition\,\ref{Krull-localization}}.
In addition, since $S/uS\cong A/FA$, $u$ is prime in $S$. 
As an $A$-module, we have:
\[
S=A\oplus Au\oplus\cdots\oplus Au^{sc-1}
\]
Choose $h\in\bigcap_{i\in\N}u^iS$, and write $h=h_0+h_1u+\cdots +h_{sc-1}u^{sc-1}$ for $h_j\in A$. By hypothesis, 
\[
h\in u^{sci}S=F^iS=F^iA\oplus F^iAu\oplus\cdots\oplus F^iAu^{sc-1}
\]
for each $i\in\N$. Therefore, $h_j\in F^iA$ for each $j=0,\hdots ,sc-1$ and each $i\in\N$. Since $A$ is a Krull domain, $h_j=0$ for each $j=0,\hdots ,sc-1$, so $h=0$.
Therefore, $\bigcap_iu^iS=(0)$. Since $u$ is prime in $S$, it follows that $S_{(u)}$ is a DVR.
By {\it Proposition\,\ref{Krull-descent}}, $S$ is a Krull domain. 
So $T$ is a Krull domain by the same argument, and $B = S \cap T$ is a Krull domain by  {\it Proposition\,\ref{Krull-intersection}}.
This shows that if $A$ is a Krull domain then so is $B$.

Conversely, assume that $B$ is a Krull domain.
Since $S=B[U]/(U^s-z)=B[u]$ and $\gcd(s,\omega)=1$, the above argument shows that $S$ is a Krull domain.
As $u$ is prime in $S$, we get $\bigcap_j u^j S = 0$.
Using $u^{sc} = F$, we see that $\bigcap_i F^i A \subset \bigcap_j u^j S = 0$, so $A_{(F)}$ is a DVR.
We also know that  $A_F$ is a Krull domain, because $S_u\cong A_F$.
By {\it Proposition\,\ref{Krull-descent}}, $A$ is a Krull domain. 
This proves (a), but let us also observe that,
since $S_u \cong A_F$ where $u$ is prime in $S$ and $F$ is prime in $A$,
{\it Proposition}~\ref{Krull-localization} implies that $\Cl(S) \cong \Cl(S_u) \cong \Cl(A_F) \cong \Cl(A)$.

(b) Assume that $A,B$ are Krull domains.
The proof of (a) shows that if $s \in \Z$ satisfies $s>0$ and $sc \equiv 1 \pmod{\omega}$
then $S=B[U]/(U^s-z)$ is a Krull domain and a finite $B$-module such that $[ \Frac(S) : \Frac(B)] = s$ and $\Cl(S) \cong \Cl(A)$.
By {\it Theorem}~\ref{class-group}, there exist group homomorphisms $\Cl(B) \to \Cl(S) \to \Cl(B)$
whose composition $\Cl(B) \to \Cl(B)$ is multiplication by $s$.
This means that $\Cl(B) \flouc{s} \Cl(S)$ (see {\it Notation}~\ref{coiuv8273dcp0}),
and since $\Cl(S) \cong \Cl(A)$, we have in fact
shown that $\Cl(B) \flouc{s} \Cl(A)$ for every integer $s>0$ satisfying $sc \equiv 1 \pmod{\omega}$.
It follows that
\begin{equation}  \label {Ocklo2893bdf9qp0ctvw5e}
\Cl(B) \flouc{*} \Cl(A) .
\end{equation}
Indeed, let $r \in \Z \setminus\{0\}$.
Choose any $s_0 \in \Z$ such that $s_0c \equiv 1 \pmod{\omega}$.
Since $\gcd(\omega,s_0,r)=1$, {\it Lemma}~\ref{coprime} implies that there exists $m \in \Z$ satisfying $\gcd(r,s_0+m\omega)=1$;
then for $N$ large enough, the number $s = s_0 + m\omega + N \cdot |r| \cdot \omega$ satisfies $s>0$, $\gcd(s,r)=1$ and $sc \equiv 1 \pmod{\omega}$,
so $\gcd(s,r)=1$ and $\Cl(B) \flouc{s} \Cl(A)$.
This proves \eqref{Ocklo2893bdf9qp0ctvw5e}. Note that \eqref{Ocklo2893bdf9qp0ctvw5e} says that we have $\Cl(B) \flouc{*} \Cl(A)$
{\it for any pair of rings $(A,B)$ that satisfies the hypotheses of assertion} (b).
Let $s \in \Z$ be such that $s>0$ and $sc \equiv 1 \pmod{\omega}$ and consider $S=B[U]/(U^s-z)$;
then, in view of {\it Remark\,\ref{p9cn293edco}}, the pair $(B,S)$ satisfies  the hypotheses of assertion (b), so \eqref{Ocklo2893bdf9qp0ctvw5e} implies that $\Cl(S) \flouc{*} \Cl(B)$.
It follows that  $\Cl(A) \flouc{*} \Cl(B)$, because $\Cl(S) \cong \Cl(A)$ was noted at the beginning of the proof of (b). Thus:
$$
\Cl(A) \flouc{*} \Cl(B) \quad \text{and} \quad \Cl(B) \flouc{*} \Cl(A) .
$$
Now assertion (b) follows from {\it Lemma}~\ref{kcjvo983w5esuiek}.

Assertion (c) is an immediate consequence of (a), (b) and  {\it Proposition\,\ref{ckjpq9wxdiqpows}}.
\end{proof}

\begin{corollary}\label{Pham-Brieskorn} Let $R$ be a UFD, $a_1,\hdots ,a_n\in\N\setminus\{ 0\}$ and $R[X_1, \hdots ,X_n]\cong R^{[n]}$, where $n\ge 3$. 
Assume that one of the following holds:
\begin{enumerate}

\item [{\rm (1)}] $n \ge 4$ and $\gcd (a_n,a_1\cdots a_{n-1})=1$; 

\item [{\rm (2)}] $n=3$ and $a_1,a_2,a_3$ are pairwise relatively prime.
\end{enumerate}
Then the ring $R[X_1,\hdots ,X_n]/(X_1^{a_1}+\cdots +X_n^{a_n})$ is a UFD.
\end{corollary}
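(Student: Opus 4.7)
The plan is to realize the target ring as $B=A[Z]/(Z^c-F)$ with $A=R[X_1,\ldots,X_{n-1}]$, $Z=X_n$, $c=a_n$, and $F=-(X_1^{a_1}+\cdots+X_{n-1}^{a_{n-1}})$, and then invoke {\it Theorem~\ref{Third-Criterion}(c)}. Since $A$ is a polynomial ring over a UFD, and hence itself a UFD, the conclusion will follow once we exhibit a $\Z$-grading of $A$ under which $F$ is homogeneous of degree coprime to $c$, and verify that $F$ is prime in $A$.

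For the grading, set $\omega=\lcm(a_1,\ldots,a_{n-1})$ and $\deg X_i=\omega/a_i$, with $R$ in degree zero; each monomial $X_i^{a_i}$ then has weight $\omega$, so $F$ is homogeneous of degree $\omega$. Since $\omega$ divides $a_1\cdots a_{n-1}$, the hypothesis of case~(1) forces $\gcd(a_n,\omega)=1$; in case~(2) we have $\omega=a_1a_2$, and pairwise coprimality yields $\gcd(a_3,a_1a_2)=1$.

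The crux is the primality of $F$ in $A$. In case~(2) the polynomial $F$ is monic of degree $a_1$ in $X_1$ over the UFD $R[X_2]$, so by Gauss's lemma primality of $F$ reduces to irreducibility of $X_1^{a_1}+X_2^{a_2}$ over $K=\Frac(R[X_2])$; by the binomial irreducibility theorem already invoked inside the proof of {\it Theorem~\ref{Third-Criterion}}, this in turn amounts to the assertion that $-X_2^{a_2}$ is not a $p$-th power in $K$ for any prime $p\mid a_1$ (plus the analogous condition on $4X_2^{a_2}$ when $4\mid a_1$). Inspecting the $X_2$-adic valuation of a putative root in the UFD $R[X_2]$ would force $p\mid a_2$, contradicting $\gcd(a_1,a_2)=1$.

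The main obstacle is case~(1), where $F$ has $n-1\ge 3$ variables and no one-variable reduction is available. By Gauss's lemma it is enough to prove irreducibility over the field $K=\Frac(R)$. The plan is geometric: provided the characteristic behaves well with respect to the $a_i$, the Jacobian $\bigl(a_1X_1^{a_1-1},\ldots,a_{n-1}X_{n-1}^{a_{n-1}-1}\bigr)$ vanishes only at the origin, so the hypersurface $V(F)\subset\mathbb{A}^{n-1}_K$ has at most an isolated singularity; a putative nontrivial factorization $F=gh$ would make $V(g)\cap V(h)$ a locus of dimension at least $(n-2)+(n-2)-(n-1)=n-3\ge 1$ contained in $\mathrm{Sing}(V(F))$, a contradiction. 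Once $F$ is known to be prime, {\it Theorem~\ref{Third-Criterion}(c)} delivers that $B$ is a UFD.
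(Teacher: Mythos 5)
Your reduction is exactly the paper's: the same $A=R[X_1,\dots,X_{n-1}]$, the same $F$, the same weights $\deg X_i=\omega/a_i$ with $\omega=\lcm(a_1,\dots,a_{n-1})$, and the same appeal to {\it Theorem~\ref{Third-Criterion}(c)}. The only content beyond the paper is your treatment of the primality of $F$, which the paper dismisses in one sentence (``the conditions on the $a_i$ imply that $F$ is irreducible \dots and it follows easily''). Your case~(2) argument is complete and valid in every characteristic: Gauss's lemma plus the $X_2$-adic valuation, which forces $p\mid a_2$ (resp.\ $4\mid a_2$) against $\gcd(a_1,a_2)=1$.

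Case~(1), however, has a genuine gap, and your parenthetical ``provided the characteristic behaves well'' is precisely where it sits. If $p=\operatorname{char}\Frac(R)>0$ divides two or more of the $a_i$ with $i\le n-1$, then the corresponding partials $a_iX_i^{a_i-1}$ vanish identically, the locus where all partials of $F$ vanish is the positive-dimensional subspace $\setspec{x}{X_j(x)=0 \text{ for all } j \text{ with } p\nmid a_j}$, and your dimension count proves nothing. Worse, the gap cannot be closed in general, because the corollary as stated is false in positive characteristic: take $\operatorname{char}R=2$, $n=4$, $(a_1,a_2,a_3,a_4)=(2,2,2,3)$, so that $\gcd(a_4,a_1a_2a_3)=1$. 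Then $F=-(X_1^2+X_2^2+X_3^2)=(X_1+X_2+X_3)^2$ is not prime, and after the linear substitution $W=X_1+X_2+X_3$ the quotient ring becomes $R[X_2,X_3]\otimes_R R[W,X_4]/(W^2+X_4^3)$; the element $w/x_4$ of its fraction field is integral over it but does not lie in it, so the ring is not normal and in particular not a UFD. Thus the missing hypothesis (that $\operatorname{char}\Frac(R)$ not divide all of $a_1,\dots,a_{n-1}$, or whatever is needed to make $F$ irreducible) is a defect of the corollary itself, which the paper's unproved irreducibility assertion conceals and which your hedge correctly isolates. Two smaller repairs are needed even in good characteristic: the affine dimension theorem requires an algebraically closed base field and a nonempty intersection, so you should pass to $\overline{K}$ and observe that, since $F$ is homogeneous for a grading with positive weights, its factors $g,h$ may be taken homogeneous of positive degree and hence both vanish at the origin, so that $V(g)\cap V(h)\neq\emptyset$ and the lower bound $n-3\ge1$ applies there.
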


\begin{proof} Let $A=R[X_1, \hdots ,X_{n-1}]$ and define $F\in A$ by $F=-X_1^{a_1}-\cdots -X_{n-1}^{a_{n-1}}$. 
The conditions on the $a_i$ imply that  $F$ is irreducible in ${\rm frac}(R)[X_1, \hdots ,X_{n-1}]$, and it follows easily that $F$ is irreducible in $A$. 
Let $\omega= \lcm(a_1, \dots, a_{n-1})$ and let $A$ have the $\Z$-grading over $R$ for which $X_i$ is homogeneous of degree $\omega/a_i$. 
Then $F$ is homogeneous of degree $\omega$ and $\gcd (\omega ,a_n)=1$. 
{\it Theorem~\ref{Third-Criterion}} implies that 
\[
R[X_1,\hdots ,X_n]/(X_1^{a_1}+\cdots +X_n^{a_n})\cong A[Z]/(Z^{a_n}-F)
\]
is a UFD.
\end{proof}


\subsection{Fourth Criterion}

\begin{theorem}\label{Fourth-Criterion}
Let $A$ be a $\Z$-graded integral domain, let $a,b\in A\setminus\{ 0\}$ be relatively prime, and let $n>0$ be such that $\gcd(n, \deg b - \deg a) = 1$. 
Let $A[Z] = A^{[1]}$.
\begin{itemize}
\item [{\bf (a)}] $(aZ^n-b)$ is a prime ideal of $A[Z]$. 
\item [{\bf (b)}] Assume that $A$ is a noetherian UFD.  If $b$ is prime in $A$ and $(A,(a,b))$ satisfies $\mathcal{P}${\rm (ii)},
then $B:=A[Z] / (a Z^n - b)$ is a UFD and ${\rm frac}(B)\cong {\rm frac}(A)$. 
\end{itemize}
\end{theorem}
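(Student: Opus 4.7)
My plan is to reduce both parts of Theorem~\ref{Fourth-Criterion} to Theorem~\ref{Third-Criterion} by factoring the construction through the intermediate ring $A':=A[b/a]$.

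Set $\Delta=\deg b-\deg a$. The first step is to extend the $\Z$-grading of $A$ to $A[X]\cong A^{[1]}$ by declaring $\deg X=\Delta$, so that $aX-b$ is homogeneous of degree $\deg b$. By Lemma~\ref{Samuel-lemma}(a), $(aX-b)$ is a prime ideal of $A[X]$ and $A[X]/(aX-b)\cong A[b/a]=A'$; the quotient therefore inherits a $\Z$-grading in which $F:=b/a$ is homogeneous of degree $\Delta$. Since $\gcd(n,\Delta)=1$, the triple $(A',F,n)$ satisfies the hypotheses of Theorem~\ref{Third-Criterion}.

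The second step is to identify $A[Z]/(aZ^n-b)$ with $A'[Z]/(Z^n-F)$ by realizing both rings as successive quotients of the auxiliary ring $A[X,Z]/(aX-b,\,X-Z^n)$: eliminating $X$ via $X=Z^n$ produces $A[Z]/(aZ^n-b)$, while eliminating $X$ via $X=b/a$ produces $A'[Z]/(Z^n-b/a)$. The two resulting rings are therefore canonically isomorphic.

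Given this identification, part~(a) is immediate from Theorem~\ref{Third-Criterion}, which ensures that $A'[Z]/(Z^n-F)$ is an integral domain, so $(aZ^n-b)$ is prime in $A[Z]$. For part~(b), note that condition $\mathcal{P}$(i) holds automatically (because $A$ is a UFD and $a\neq0$) and that $\mathcal{P}$(ii) is assumed; hence Corollary~\ref{noetherian} shows that $A'$ is a noetherian UFD. A short computation verifies that $F=b/a$ is prime in $A'$: indeed,
\[
A'/FA'\;\cong\;A[X]/(aX-b,\,X)\;\cong\;A/bA,
\]
which is a domain because $b$ is prime in $A$. Theorem~\ref{Third-Criterion}(c) then gives that $B\cong A'[Z]/(Z^n-F)$ is a UFD, and $\Frac(B)\cong\Frac(A')=\Frac(A)$ since $A\subset A'\subset\Frac(A)$.

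The main piece of bookkeeping is the identification of $A[Z]/(aZ^n-b)$ with $A'[Z]/(Z^n-F)$ via the auxiliary ring $A[X,Z]/(aX-b,\,X-Z^n)$, together with the verification that the $\Z$-grading descends correctly to $A'$; both steps are routine, and I do not anticipate any substantive obstacle beyond this.
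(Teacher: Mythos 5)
Your proposal is correct and follows essentially the same route as the paper: extend the grading to $A[X]$ with $\deg X=\deg b-\deg a$, pass to $A'=A[X]/(aX-b)$ (a noetherian UFD by Corollary~\ref{noetherian}, with $b/a$ prime since $A'/(b/a)A'\cong A/bA$), identify $B$ with $A'[Z]/(Z^n-b/a)$ via the auxiliary ring $A[X,Z]/(aX-b,\,Z^n-X)$, and invoke Theorem~\ref{Third-Criterion}. No substantive differences.
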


\begin{proof} Extend the $\Z$-grading to $A[X]\cong A^{[1]}$ so that $X$ is homogeneous and $\deg X=\deg b-\deg a$. Then $aX-b$ is homogeneous
and the ring $A'=A[X]/(aX-b)$ is $\Z$-graded.
Write $A'=A[x]$, where $x$ is the canonical image of $X$ in $A'$ (in particular, $x\ne 0$).
Then $x$ is homogeneous and $\deg x=\deg b-\deg a$.
It follows by {\it Theorem\,\ref{Samuel-lemma}} that $A'$ is an integral domain.
It is also clear that ${\rm frac}(A')={\rm frac}(A)$. We have:
\[
B\cong A[X,Z]/(aX-b,Z^n-X)\cong A'[Z]/(Z^n-x)
\]
Since $\gcd(n, \deg(x))=1$,  {\it Theorem~\ref{Third-Criterion}} implies that $B$ is an integral domain. This proves part (a). 

Assume that the hypotheses of part (b) hold. Then 
$(A,(a,b))$ satisfies $\mathcal{P}{\rm (i)}$ and, since $A$ is a UFD,
$A'$ is a UFD by {\it Corollary\,\ref{noetherian}}. 
Since $A'/xA' \cong A/bA$, we see that $x$ is a prime element of $A'$.
So {\it Theorem~\ref{Third-Criterion}} implies that  $B\cong A'[Z]/(Z^n-x)$ is a UFD, and that 
${\rm frac}(B)\cong {\rm frac}(A')={\rm frac}(A)$. 
So part (b) is proved. 
\end{proof}


\subsection{Fifth Criterion}

\begin{theorem}\label{Fifth-Criterion} Let $A=\bigoplus_{i\in\Z}A_i$ be a $\Z$-graded integral domain and let
$F\in A_{\omega} \setminus \{ 0 \}$, $\omega\in\Z$. 
Let $A[Z_1,\hdots ,Z_n]\cong A^{[n]}$ for $n\ge 1$, and let $e_1,\hdots e_n\ge 1$ be integers such that 
$\gcd(e_1,\hdots ,e_n,\omega )=1$.
\begin{itemize}
\item [{\bf (a)}] $(Z_1^{e_1}\cdots Z_n^{e_n}-F)$ is a prime ideal of $A[Z_1,\hdots ,Z_n]$
\item [{\bf (b)}] Define:
\[
 B=A[Z_1,\hdots ,Z_n]/(Z_1^{e_1}\cdots Z_n^{e_n}-F)
 \]
If $A$ is a noetherian UFD and $F$ is prime in $A$, then $B$ is a UFD and ${\rm frac}(B)\cong {\rm frac}(A)^{(n-1)}$. 
\end{itemize}
\end{theorem}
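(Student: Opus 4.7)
The plan is to reduce to \emph{Theorem}~\ref{Fourth-Criterion} by isolating $Z_1$ and absorbing the remaining variables into the coefficient ring. Specifically, I would set $A' = A[Z_2, \ldots, Z_n]$, $a = Z_2^{e_2}\cdots Z_n^{e_n}$ and $b = F$, so that $B \cong A'[Z_1]/(aZ_1^{e_1} - b)$. The case $n = 1$ is already covered by \emph{Theorem}~\ref{Third-Criterion} (there $a = 1$ and $\mathcal{P}$(ii) is vacuous), so the substantive case is $n \geq 2$.

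The one nontrivial piece of set-up is equipping $A'$ with a $\Z$-grading that makes $aZ_1^{e_1}-b$ fit the Fourth Criterion. I would extend the given grading on $A$ by declaring $\deg Z_i = d_i$ for $i = 2,\dots,n$, with $d_2,\dots,d_n\in\Z$ to be determined. Then $a$ is homogeneous of degree $\sum_{i=2}^n e_i d_i$, $b$ is homogeneous of degree $\omega$, and the required condition $\gcd(e_1,\deg b-\deg a)=1$ reads
\[
\gcd\!\bigl(e_1,\;\omega - e_2 d_2 - \cdots - e_n d_n\bigr)=1.
\]
This is exactly what \emph{Lemma}~\ref{coprime} produces from the hypothesis $\gcd(e_1,e_2,\ldots,e_n,\omega)=1$, applied with $c=e_1$, $b=\omega$ and $(a_1,\ldots,a_{n-1})=(e_2,\ldots,e_n)$. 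This is the one place the gcd hypothesis is used, and I regard it as the main step of the argument; everything else is a matter of checking Fourth-Criterion hypotheses.

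With the grading in place, the rest is routine. First, $a$ and $b=F$ are relatively prime in $A'$: any relation $Fg=ah$ forces $Z_i^{e_i}\mid Fg$ in $A'$ for each $i$, and expanding $g$ as a polynomial in $Z_2,\ldots,Z_n$ over $A$ together with $F\neq 0$ in the domain $A$ forces $Z_i^{e_i}\mid g$ for every $i$, whence $a\mid g$. Thus \emph{Theorem}~\ref{Fourth-Criterion}(a) applies and yields part (a). For part (b), $A'\cong A^{[n-1]}$ is a noetherian UFD; $F$ remains prime in $A'$ because $A'/FA'\cong (A/FA)^{[n-1]}$ is a domain; and $(A',(a,b))$ satisfies $\mathcal{P}$(ii), since every prime of $A'$ dividing $a$ is associate to one of $Z_2,\ldots,Z_n$ and $A'/(Z_iA'+FA')\cong (A/FA)[Z_2,\ldots,\widehat{Z_i},\ldots,Z_n]$ is a domain. \emph{Theorem}~\ref{Fourth-Criterion}(b) then gives that $B$ is a UFD with $\mathrm{frac}(B)\cong \mathrm{frac}(A') = \mathrm{frac}(A)^{(n-1)}$.
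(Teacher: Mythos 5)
Your proof is correct and follows essentially the same route as the paper's: both reduce to \emph{Theorem}~\ref{Fourth-Criterion} by singling out one variable (you isolate $Z_1$ where the paper isolates $Z_n$, handling $n=1$ via \emph{Theorem}~\ref{Third-Criterion}) and both use \emph{Lemma}~\ref{coprime} to assign degrees to the remaining variables so that the gcd hypothesis of the Fourth Criterion holds. Your verifications of relative primeness, primality of $F$ in the enlarged base ring, and condition $\mathcal{P}$(ii) are exactly the checks the paper declares ``easy to check.''
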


\begin{proof} We may assume that $n>1$, otherwise both parts of the claim follow from  {\it Theorem~\ref{Third-Criterion}}.

By {\it Lemma~\ref{coprime}}, there exist $m_1,\hdots ,m_{n-1} \in \Z$ so that:
\[
\gcd(e_n,\omega-(m_1e_1+\cdots +m_{n-1}e_{n-1}))=1 
\]
Extend the $\Z$-grading of $A$ to a $\Z$-grading of  $R:= A[Z_1, \dots, Z_{n-1}]$ by declaring that $Z_i$ is homogeneous of degree $m_i$, noting that $R$ is an integral domain. 
Define $a,b\in R$ by $a = Z_1^{e_1} \cdots Z_{n-1}^{e_{n-1}}$ and $b = F$. Then $a$ and $b$ are nonzero elements of $R$ with $aR\cap bR=abR$, and  
$\deg b=\omega$  and $\deg a=m_1e_1+\cdots +m_{n-1}e_{n-1}$.
Since $\gcd (e_n,\deg b-\deg a)=1$, {\it Theorem~\ref{Fourth-Criterion}(a)} implies that $(aZ_n^{e_n}-b)=(Z_1^{e_1}\cdots Z_n^{e_n}-F)$ is a prime ideal of $A[Z_1,\hdots ,Z_n]$. This proves part (a). 

Assume that the hypotheses of part (b) hold. 
Note that $R$ is a noetherian UFD. It is easy to check that $b$ is prime in $R$, $b \nmid a$ in $R$, and for each $i \in \{ 1, \dots, n-1\}$, $(Z_i,b)$ is a prime ideal of $R$.
So  {\it Theorem~\ref{Fourth-Criterion}(b)} implies that $B$ is a UFD, and that ${\rm frac}(B)\cong {\rm frac}(R)\cong {\rm frac}(A)^{(n-1)}$. 
\end{proof}


\section{Application I: Rational UFDs of Dimension Three}\label{examples}

\subsection{A Family of Three-dimensional Affine UFDs}
The following lemma generalizes Lemma 10.15 in \cite{Fossum.73}, and Lemma 2 in \cite{Daigle.Freudenburg.01a}. 

\begin{lemma} \label {prime}
Let $D$ be an integral domain, $n\ge 0$,  $u_1,\hdots ,u_n\in D^{\ast}$, $R_n=D[Z_0,\hdots ,Z_n]\cong D^{[n+1]}$, 
and $a_1,\hdots ,a_n$, $b_1,\hdots ,b_n$ positive integers such that $\gcd (a_i,b_1\cdots b_i)=1$ for each $i \in \{1, \dots, n\}$. Then
\[
I_n:=(u_1Z_1^{a_1}+Z_0^{b_1},\hdots , u_nZ_n^{a_n}+Z_{n-1}^{b_n})
\]
is a prime ideal of $R_n$ and $Z_n \notin I_n$.
\end{lemma}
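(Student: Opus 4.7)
The natural strategy is induction on $n$. The base case $n=0$ gives $I_0=(0)$, which is prime in $R_0=D[Z_0]$ and clearly avoids $Z_0$.

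For the inductive step, let $A=R_{n-1}/I_{n-1}$, an integral domain in which the image $\bar Z_{n-1}$ is nonzero by the inductive hypothesis. Since $R_n/I_{n-1}R_n\cong A[Z_n]$, it suffices to show that $u_nZ_n^{a_n}+\bar Z_{n-1}^{b_n}$ generates a prime ideal of $A[Z_n]$ that does not contain $Z_n$; equivalently, writing $F=-u_n^{-1}\bar Z_{n-1}^{b_n}\in A\setminus\{0\}$, that $A[Z_n]/(Z_n^{a_n}-F)$ is a domain in which $Z_n\neq 0$. I will obtain the domain assertion from \emph{Theorem~\ref{Third-Criterion}}, whose first assertion requires only a $\Z$-grading on $A$ making $F$ homogeneous of degree coprime to $a_n$.

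For the grading, set $d_i=(b_1\cdots b_i)(a_{i+1}\cdots a_{n-1})\in\Z_{>0}$ for $i=0,1,\dots,n-1$ (with empty products equal to $1$) and grade $R_{n-1}$ over $D$ by $\deg Z_i=d_i$, so that $D\subset (R_{n-1})_0$. A direct check gives $a_id_i=b_id_{i-1}$ for each $i\in\{1,\dots,n-1\}$, so every generator $u_jZ_j^{a_j}+Z_{j-1}^{b_j}$ of $I_{n-1}$ is homogeneous; thus $I_{n-1}$ is a homogeneous ideal and the grading descends to $A$. Under the induced grading, $F$ is homogeneous of degree $b_nd_{n-1}=b_1\cdots b_n$, and the hypothesis $\gcd(a_n,b_1\cdots b_n)=1$ is exactly the coprimality condition required by \emph{Theorem~\ref{Third-Criterion}}. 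Hence $A[Z_n]/(Z_n^{a_n}-F)$ is a domain, so $I_n$ is prime in $R_n$. Finally, if the image of $Z_n$ in $R_n/I_n$ were zero, then $F=Z_n^{a_n}=0$, contradicting $\bar Z_{n-1}\neq 0$; so $Z_n\notin I_n$.

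The main point---and the step most likely to require care---is identifying the correct system of weights $d_i$: the recursion $a_id_i=b_id_{i-1}$ is forced by homogeneity of the generators of $I_{n-1}$, and the explicit formula above is the one that makes $\deg F=b_1\cdots b_n$, matching the given coprimality hypothesis on the nose. No other genuine obstacle is expected: everything else is the formal reduction of primality of $I_n$ to a single application of \emph{Theorem~\ref{Third-Criterion}} in the quotient $A[Z_n]$.
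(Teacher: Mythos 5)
Your proof is correct and follows essentially the same route as the paper: induction on $n$, the same weights $\deg Z_i=(b_1\cdots b_i)(a_{i+1}\cdots a_{n-1})$ making the generators of $I_{n-1}$ homogeneous, and a single application of the first assertion of \emph{Theorem~\ref{Third-Criterion}} to $A[Z_n]/(Z_n^{a_n}-F)$ with $\deg F=b_1\cdots b_n$. The handling of $Z_n\notin I_n$ also matches the paper's argument.
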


\begin{proof}
We proceed by induction on $n$, the case $n=0$ being clear: $I_0=(0)\subset R_0 = D[Z_0]$. 
Let $n\ge 1$ and assume that $I_{n-1}$ is a prime ideal of $R_{n-1}=D[Z_0,\hdots ,Z_{n-1}]$ and that $Z_{n-1} \notin I_{n-1}$.
Define a $\Z$-grading of $R_{n-1}$ over $D$ for which $Z_i$ is homogeneous of degree $b_1\cdots b_ia_{i+1}\cdots a_{n-1}$, $0\le i\le n-1$.
Then the quotient ring $A:=R_{n-1}/I_{n-1}$ is a $\Z$-graded integral domain.

Let $F\in A$ be the image of $Z_{n-1}^{b_n}$;
since $Z_{n-1} \notin I_{n-1}$ and  $I_{n-1}$ is prime, we have  $Z_{n-1}^{b_n} \notin I_{n-1}$ and hence $F \neq 0$; note
that $\deg F=b_1\cdots b_n$. By hypothesis, $\gcd (a_n,\deg F)=1$. Therefore, by {\it Theorem~\ref{Third-Criterion}(a)}, the ring
$A[Z]/(Z^{a_n}-u_n^{-1}F)\cong R_n/I_n$ is an integral domain, so $I_n$ is prime. 
If $Z_n \in I_n$ then the image of $Z$ in $A[Z]/(Z^{a_n}-u_n^{-1}F)$ is zero, which is not the case because $F\neq 0$.
So $Z_n \notin I_n$ and the proof is complete.
\end{proof}

\begin{theorem}\label{threefold}
Let $K$ be a noetherian UFD, $n \in \N$, $K[Z_0,\hdots ,Z_{n+1}]\cong K^{[n+2]}$, $u_i,v_i\in K^*$
and $a_i,b_i$ positive integers such that $\gcd (a_i,b_1\cdots b_i)=1$, $1\le i\le n$. 
Define the ring
\[
A_n = K[Z_0,\hdots ,Z_{n+1}]/ (f_iZ_{i+1}+u_iZ_i^{a_i}+v_iZ_{i-1}^{b_i})_{1\le i\le n} 
\]
where $f_1,\hdots ,f_n\in K\setminus\{ 0\}$ and
the set of prime factors of $f_i$ in $K$ is the same for all $i=1, \dots, n$.
Then $A_n$ is a UFD and ${\rm frac}(A_n)\cong {\rm frac}(K[Z_0,Z_1])\cong ({\rm frac}\, K)^{(2)}$.
\end{theorem}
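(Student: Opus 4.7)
The plan is to argue by induction on $n$, applying Corollary \ref{noetherian} at each step. The base case $n=0$ is immediate: $A_0 = K[Z_0, Z_1]$ is a polynomial ring in two variables over the noetherian UFD $K$, so $A_0$ is itself a noetherian UFD with $\Frac(A_0) \cong (\Frac K)^{(2)}$.

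For the inductive step, assume $A_{n-1}$ is a noetherian UFD with $\Frac(A_{n-1}) \cong (\Frac K)^{(2)}$, and rewrite
\[
A_n \cong A_{n-1}[Z_{n+1}]/(aZ_{n+1} - b),
\]
where $a := f_n \in K$ and $b := -u_n z_n^{a_n} - v_n z_{n-1}^{b_n} \in A_{n-1}$. I intend to apply Corollary \ref{noetherian} to the pair $(A_{n-1}, (a,b))$, which requires verifying conditions $\mathcal{P}(\mathrm{i})$ and $\mathcal{P}(\mathrm{ii})$.

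The key computation is a reduction modulo a prime factor of $f_n$. Let $T \subset K$ be the (common) set of prime factors of $f_1, \dots, f_n$; for each $p \in T$ every $f_i$ lies in $pK$, so the defining relations of $A_{n-1}$ reduce modulo $p$ to
\[
u_i Z_i^{a_i} + v_i Z_{i-1}^{b_i} \equiv 0 \qquad (1 \le i \le n-1).
\]
After rescaling each generator by the unit $v_i^{-1}$, the resulting ideal in $(K/pK)[Z_0, \dots, Z_{n-1}]$ is precisely of the shape treated in Lemma \ref{prime} (with $D = K/pK$), so it is prime and does not contain $Z_{n-1}$. Adjoining the free variable $Z_n$ yields $A_{n-1}/pA_{n-1}$, which is therefore a polynomial ring in $Z_n$ over a domain in which $z_{n-1}\neq 0$; in particular $p$ is prime in $A_{n-1}$, and the image of $b=-u_n z_n^{a_n} - v_n z_{n-1}^{b_n}$ is a nonzero polynomial in the transcendental element $z_n$. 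This gives $\mathcal{P}(\mathrm{i})$: $a = f_n$ is a product of primes in $A_{n-1}$, and $\gcd(a,b)=1$ in the UFD $A_{n-1}$ since no $p \in T$ divides $b$. For $\mathcal{P}(\mathrm{ii})$, note that $A_{n-1}/(pA_{n-1} + bA_{n-1})$ equals
\[
(K/pK)[Z_0, \dots, Z_n] \big/ (u_i Z_i^{a_i} + v_i Z_{i-1}^{b_i})_{1 \le i \le n},
\]
which is a domain by a second application of Lemma \ref{prime} to the full chain of $n$ relations.

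Corollary \ref{noetherian} then gives that $A_n$ is a noetherian UFD, and the relation $a z_{n+1} = b$ exhibits $z_{n+1}$ as an element of $\Frac(A_{n-1})$, so $\Frac(A_n) = \Frac(A_{n-1}) \cong (\Frac K)^{(2)}$ by induction. The main obstacle is really just the bookkeeping needed to apply Lemma \ref{prime} correctly after reduction modulo $p$; once one recognises that each reduced relation can be rescaled by a unit into the exact form of that lemma, and that the hypothesis on the common prime factors is precisely what forces every $f_i$ to vanish modulo $p$, the whole argument slots together.
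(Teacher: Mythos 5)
Your argument is essentially the paper's own proof: the authors likewise proceed by induction on the index, reduce modulo the common prime factors $\kappa$ of the $f_i$, invoke Lemma~\ref{prime} to see that both $\kappa A_m$ and $\kappa A_m + h_m A_m$ are prime (so that $\mathcal{P}$(i) and $\mathcal{P}$(ii) hold), and then apply Corollary~\ref{noetherian}. The one point they treat that you elide is the degenerate case where the $f_i$ have no prime factors at all, i.e.\ are all units --- they dispatch it separately by observing $A_n \cong K^{[2]}$ --- whereas your verification of $\mathcal{P}$(i) (in particular that $b \neq 0$) tacitly assumes the set $T$ of common prime factors is nonempty.
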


\begin{proof} Since $f_iZ_{i+1}+u_iZ_i^{a_i}+v_iZ_{i-1}^{b_i}=v_i(v_i^{-1}f_iZ_{i+1}+u_iv_i^{-1}Z_i^{a_i}+Z_{i-1}^{b_i})$, we may assume each $v_i=1$. 
Let $\pi_n :K[Z_0,\hdots ,Z_{n+1}]\to A_n$ be the standard surjection. The restriction of $\pi_n$ to $K$ is injective. To see this, observe that the ideal
$(f_iZ_{i+1}+u_iZ_i^{a_i}+Z_{i-1}^{b_i})_{1\le i\le n}$ of  $K[Z_0,\hdots ,Z_{n+1}]$ is included in $(Z_0, \dots, Z_{n+1})$,
and $(Z_0, \dots, Z_{n+1}) \cap K = \{0\}$. So $K\subset A_n$. 

Let $P_K$ be the set of prime elements of $K$  dividing some (hence all) $f_i$.
If $P_K = \emptyset$ then $f_1, \dots, f_n$ are units of $K$ and consequently $A_n \cong K[Z_0,Z_1] = K^{[2]}$, in which
case the theorem is true. So we may assume, throughout, that $P_K \neq \emptyset$.
Given $\kappa \in P_K$ and $m \in \{0,\dots,n\}$, define the ring:
\[
Q(m,\kappa)=(K/\kappa K)[Z_0,\hdots ,Z_m]/(\bar{u}_iZ_i^{a_i}+Z_{i-1}^{b_i})_{1\le i\le m} 
\]
where $\bar{u}_i$ is the class of $u_i$ in $K/\kappa K$. By {\it Lemma~\ref{prime}}, $Q(m,\kappa )$ is an integral domain.
Note that $\kappa \neq 0$ in $A_m$ because $\pi_m$ is injective on $K$. Moreover,
\[
A_m/\kappa A_m = K[Z_0,\hdots ,Z_{m+1}]/(\kappa , f_iZ_{i+1}+u_iZ_i^{a_i}+Z_{i-1}^{b_i})_{1\le i\le m}\cong Q(m,\kappa )[Z_{m+1}]\cong Q(m,\kappa )^{[1]} .
\]
Assuming that $m < n$, define $h_m\in A_m$ by $h_m=\pi_m (u_{m+1}Z_{m+1}^{a_{m+1}}+Z_m^{b_{m+1}})$; then
\[
A_m/(\kappa A_m+h_mA_m) = K[Z_0,\hdots ,Z_{m+1}]/(\kappa , u_iZ_i^{a_i}+Z_{i-1}^{b_i})_{1\le i\le m+1} \cong Q(m+1,\kappa ) .
\]
Also note that $h_m \notin \kappa A_m$.
Indeed, the image of $h_m$ in $A_m/\kappa A_m \cong Q(m,\kappa )[Z_{m+1}]\cong Q(m,\kappa )^{[1]}$ is not zero, because it has the form
$u Z_{m+1}^{a_{m+1}} + c$ with $u,c \in Q(m,\kappa )$, $u \neq 0$ and $a_{m+1}>0$. 
We have shown that  for each $\kappa \in P_K$, the following two statements are true:
\begin{gather}
\label {827365474gh394tj} \text{For each $m \in \{0,\dots,n\}$, $\kappa A_m$ is a nonzero prime ideal of $A_m$.}\\
\label {87cvbc355hap91} \text{For each $m \in \{0,\dots,n-1\}$,  $\kappa A_m+h_mA_m$ is a prime ideal of $A_m$ and  $h_m \notin \kappa A_m$.}
\end{gather}

By induction on $m$, we proceed to show $A_m$ is a UFD for all $m=0,\dots,n$.
Since $A_0=K[Z_0,Z_1]\cong K^{[2]}$, we have a basis for induction. 
Consider $m \in \{0,\dots,n-1\}$ such that $A_m$ is a UFD.
Let us prove the following assertions:
\begin{itemize}

\item[(i)] $f_{m+1}, h_m \in A_m \setminus\{0\}$, $f_{m+1}A_m \cap h_mA_m = f_{m+1}h_mA_m$, and $f_{m+1}$ is a product of primes in~$A_m$.

\item[(ii)] For each prime element $p$ of $A_m$ such that $f_{m+1}\in p A_m$, 
we have $p A_m + h_mA_m \in \Spec A_m$.
\end{itemize}
Since $P_K \neq \emptyset$, we have $f_{m+1} = \kappa_1 \cdots \kappa_r$ for some $\kappa_1, \dots, \kappa_r \in P_K$ and $r\ge1$.
By \eqref{827365474gh394tj}, each $\kappa_i$ is a prime element of $A_m$; so $f_{m+1} \neq 0$ in $A_m$ and $f_{m+1}$ is a product of primes in $A_m$.
Since $h_m \notin \kappa_1 A_m$ by \eqref{87cvbc355hap91}, we have $h_m \neq 0$ in $A_m$.
Actually, \eqref{87cvbc355hap91} gives $h_m \notin \kappa_i A_m$ for all $i = 1,\dots,r$, so $f_{m+1}$ and $h_m$ are relatively prime in $A_m$
and consequently $f_{m+1}A_m \cap h_mA_m = f_{m+1}h_mA_m$. This proves (i).
To prove (ii), consider a prime element $p$ of $A_m$ satisfying $f_{m+1}\in p A_m$.
Then, for some $i$, we have $p \mid \kappa_i$ in $A_m$.
Since $\kappa_i$ is a prime element of $A_m$,
we have $p = u\kappa_i$ for some $u \in A_m^*$, so $p A_m + h_mA_m = \kappa_i A_m + h_mA_m$.
We have $\kappa_i A_m + h_mA_m \in \Spec A_m$ by \eqref{87cvbc355hap91}, so (ii) is proved.

It follows from (i) and (ii) that $(A_m, (f_{m+1},h_m))$ satisfies the conditions $\mathcal{P}$(i) and $\mathcal{P}$(ii) stated just before Theorem \ref{First-Criterion}.
Since $A_{m+1}=A_m[Z_{m+2}]/(f_{m+1}Z_{m+2}+h_m)$, {\it Corollary\,\ref{noetherian}} implies that $A_{m+1}$ is a UFD.
By induction, we obtain that $A_0,\hdots ,A_n$ are UFDs. Since $A_n$ is in particular an integral domain, the assertion 
${\rm frac}(A_n)\cong {\rm frac}(K[Z_0,Z_1])\cong ({\rm frac}\, K)^{(2)}$ is now clear. 
\end{proof} 

Let $k[x]=k^{[1]}$ for a field $k$, $u_i,v_i\in k^*$, and let $p_1(x),\hdots ,p_n(x)\in k[x]\setminus\{ 0\}$ be such that each $p_i(x)$ has the same set of prime factors.
Define the affine $k$-algebra
\begin{equation}\label{kernel}
B_n=k[x][z_0,\hdots ,z_{n+1}]/(p_i(x)z_{i+1}+u_iz_i^{a_i}+v_iz_{i-1}^{b_i})_{1\le i\le n}
\end{equation}
where $a_1,\hdots ,a_n,b_1,\hdots ,b_n$ are positive integers such that $\gcd (a_i,b_1\cdots b_i)=1$ for each $i$.
Using $K=k[x]$ in {\it Theorem~\ref{threefold}}, it follows that, for each $n\ge 1$, $B_n$ is an affine rational UFD of dimension 3 over $k$.
\begin{proposition}\label{embed-dim}
If $p_i(x)\not\in k$ and $a_i,b_i\ge 2$ for all $i$, then the minimum number of generators of $B_n$ as a $k$-algebra is $n + 3$.  
\end{proposition}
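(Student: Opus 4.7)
The plan is to bracket the minimum number of generators between $n+3$ and $n+3$. The upper bound is clear from the presentation (\ref{kernel}): the images of $x,z_0,z_1,\dots,z_{n+1}$ form a $k$-algebra generating set of $B_n$ of cardinality $n+3$. All the real work is in the reverse inequality, and I would extract it from a local computation of embedding dimension at a carefully chosen closed point.

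Specifically, since $p_1(x)\notin k$, pick a prime factor $q(x)\in k[x]$ of $p_1(x)$; by the hypothesis that $p_1,\dots,p_n$ share the same set of prime factors, $q$ divides every $p_i$. Set $P=k[x,z_0,\dots,z_{n+1}]\cong k^{[n+3]}$, $\ell=k[x]/(q)$, and
\[
\mathfrak{n}\;=\;(q,z_0,z_1,\dots,z_{n+1})\;\subset\;P,
\]
a maximal ideal of $P$ with residue field $\ell$. The key observation, which I regard as the main technical step, is that each defining relation
\[
f_i\;=\;p_i(x)z_{i+1}+u_iz_i^{a_i}+v_iz_{i-1}^{b_i}
\]
lies in $\mathfrak{n}^2$: indeed $p_i(x)z_{i+1}\in (q)(z_{i+1})\subset\mathfrak{n}^2$ because $q\mid p_i$, while $z_i^{a_i}$ and $z_{i-1}^{b_i}$ belong to $\mathfrak{n}^2$ because $a_i,b_i\ge 2$. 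Both hypotheses of the proposition are essential here; dropping either one would introduce a nonzero linear term into some $f_i$ and collapse the bound.

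Writing $B_n=P/I$ with $I=(f_1,\dots,f_n)\subset\mathfrak{n}^2$, the image $\mathfrak{m}=\mathfrak{n}/I$ is a maximal ideal of $B_n$ with residue field $\ell$, and
\[
\mathfrak{m}/\mathfrak{m}^2\;=\;\mathfrak{n}/(\mathfrak{n}^2+I)\;=\;\mathfrak{n}/\mathfrak{n}^2,
\]
an $\ell$-vector space of dimension $n+3$ since $P_{\mathfrak{n}}$ is a regular local ring of dimension $n+3$. Finally, if $B_n=k[b_1,\dots,b_N]$ is any $k$-algebra generating set of cardinality $N$, the resulting surjection $k[y_1,\dots,y_N]\twoheadrightarrow B_n$ pulls $\mathfrak{m}$ back to a maximal ideal $\mathfrak{n}'$ of the polynomial ring with residue field $\ell$. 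Since $k[y_1,\dots,y_N]_{\mathfrak{n}'}$ is a regular local ring of Krull dimension $\le N$, we have $\dim_\ell\mathfrak{n}'/\mathfrak{n}'^2\le N$; and because $\mathfrak{m}/\mathfrak{m}^2$ is a quotient of $\mathfrak{n}'/\mathfrak{n}'^2$ via the induced surjection, we conclude $n+3=\dim_\ell\mathfrak{m}/\mathfrak{m}^2\le N$, giving the desired inequality.
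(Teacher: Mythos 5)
Your proof is correct and is essentially the paper's argument: both bound the number of generators below by the dimension of the cotangent space $\mathfrak{m}/\mathfrak{m}^2$ at the maximal ideal $(q(x),z_0,\dots,z_{n+1})$, where $q$ is a common prime factor of the $p_i$; your observation that each $f_i\in\mathfrak{n}^2$ is just the statement that the Jacobian matrix used in the paper vanishes at that point. The one (minor but genuine) improvement is that by tracking the residue field $\ell=k[x]/(q)$ you avoid the paper's unjustified reduction to the algebraically closed case.
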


\begin{proof} It suffices to prove the case where $k$ is algebraically closed.
Let $d$ be the minimum number of generators of $B_n$ over $k$. Then clearly $d\le n+3$. 
Set $X = {\rm Spec} (B_n)\subset \A_k^{n+3}$, affine $n$ space over $k$. For $1 \leq i \leq n$, let $f_i = p_i(x)z_{i+1}+u_iz_i^{a_i}+v_iz_{i-1}^{b_i}$. Let $J$ be the Jacobian matrix of $(f_1,\ldots ,f_n)$, namely:
	\begin{center}
	  $\displaystyle J = {\Bigg (}\frac{\partial f_i}{\partial x}, \frac{\partial f_i}{\partial z_j}{\Bigg )_{1 \leq i \leq n,\: 0 \leq j \leq n + 1}}$
	\end{center} 
Then $J$ is a matrix of size $n \times (n+3)$. For $1 \leq i \leq n$ and $0 \leq j \leq n + 1$, we have $\partial f_i/ \partial x =  p_i'(x)z_{i + 1}$ and: 
\[
\frac{\partial f_i}{\partial z_j} = 
	\begin{cases}
	  p_i(x) 		  & (j = i + 1) \\ 
	  u_ia_iz_i^{a_i - 1}  & (j = i) \\ 
	  v_ib_iz_{i - 1}^{b_i - 1} & (j = i + 1) \\ 
	  0 & ({\rm otherwise})
  	 \end{cases}
\]
For a maximal ideal $\mathfrak{m}$ of $B_n$, we denote by $J(\mathfrak{m})$ the Jacobian matrix at $\mathfrak{m}$, that is,
\begin{center}
	  $\displaystyle J(\mathfrak{m}) = {\Bigg (}\frac{\partial f_i}{\partial x} (\mathfrak{m}), \frac{\partial f_i}{\partial z_j} (\mathfrak{m}){\Bigg )_{1 \leq i \leq n,\: 0 \leq j \leq n + 1}}$
	\end{center} 
where for $g \in B_n$, $g(\mathfrak{m})$ means the image of $g$ in $B_n/\mathfrak{m}$. 

Take a common prime divisor $q(x) \in k[x]$ of $p_1(x),\ldots, p_n(x)$, which is possible since $p_i(x)\not\in k$ and each $p_i(x)$ generates the same radical ideal in $k[x]$. Let $\mathfrak{m}$ be the maximal ideal of $B_n$ generated by $q(x), z_0, \ldots, z_{n + 1}$. 
Since $a_i,b_i\ge 2$ for each $i$, we see that ${\rm rank} (J(\mathfrak{m})) = 0$, hence we have: 
	\begin{center}
	  $\dim_k (\mathfrak{m} / {\mathfrak{m}}^2) = (n + 3) - {\rm rank} (J(\mathfrak{m})) = n + 3$
	\end{center}
Therefore, the dimension of the tangent space at $\mathfrak{m}$ is $n + 3$, which implies $d \geq n + 3$. 
\end{proof}

\begin{example} {\rm In \cite{Daigle.Freudenburg.01a}, the authors give the following rings. Let $k$ be a field of characteristic zero, and let $p,q$ be prime integers with $p^2<q$. Given $n\ge 0$, define
\[
\Omega_n=k[X,Z_0,\hdots ,Z_{n+1}]/\left( XZ_{i+1}+Z_i^p+Z_{i-1}^q\right)_{1\le i\le n}
\]
where $X,Z_0,\hdots ,Z_{n+1}$ are indeterminates over $k$. Then for each $n\ge 0$, there exists a locally nilpotent derivation of $k^{[4]}$ with kernel isomorphic to  $\Omega_n$. Moreover, {\it Proposition~\ref{embed-dim}} shows that $\Omega_n$ is minimally generated by $n+3$ elements over $k$. }
\end{example}

\begin{remark} {\rm One motivation to consider UFDs of the type given in this section comes from the study of locally nilpotent derivations of polynomial rings $\C^{[n]}$ for the field $\C$ of complex numbers. The kernel $A$ of such a derivation is a UFD of transcendence degree $n-1$ over $\C$.
It is known that $A$ is quasi-affine \cite{Winkelmann.03} and ${\rm frac}(A)$ is ruled \cite{Deveney.Finston.94a}; that $A\cong \C^{[n-1]}$ if $1\le n\le 3$ \cite{Miyanishi.85}; and that 
$A$ is generally non-noetherian if $n\ge 5$ \cite{Daigle.Freudenburg.99, Freudenburg.Kuroda.17}. 
For $n=4$, it is further known that $A$ is rational \cite{Deveney.Finston.94a} and that there can be no {\it a priori} bound on the number of generators needed
\cite{Daigle.Freudenburg.01a}. But the question whether $A$ must be affine when $n=4$, or even noetherian, is open. We are thus led to study quasi-affine rational UFDs of transcendence degree 3 over $\C$. }
\end{remark}


\subsection{A Graded Rational Non-Noetherian UFD of Dimension 3 }
Let $k$ be a field and let $\Lambda =k[X,Z_0,Z_1,Z_2,\hdots ]$ be the polynomial ring in a countably infinite number of variables $X,Z_i$, $i\ge 0$. Define 
\[
\Omega = k[X,Z_0,Z_1,Z_2,\hdots ]/(X^{2^i}Z_{i+1}+Z_i+Z_{i-1}^2)_{i\ge 1}=k[x,z_0,z_1,z_2,\hdots ]
\]
where $x,z_i$ denote the images of $X,Z_i$ under the standard surjection of $\Lambda$ onto $\Omega$. 
For each $n\ge 1$, define:
\[
f_n=X^{2^n}Z_{n+1}+Z_n+Z_{n-1}^2\in \Lambda_n:=k[X,Z_0,\hdots ,Z_{n+1}]\cong k^{[n+3]}
\]
Since the ring displayed in (\ref{kernel}) is a UFD, hence a domain, 
we see that the ideal $I_n:=(f_1,\hdots ,f_n)$ is a prime ideal of $\Lambda_n$. 
Let $I\subset\Lambda$ be the ideal $I=(f_1,f_2,f_3,\hdots )$. Then $I=\bigcup_{n\ge 1} I_n$, which implies that $I$ is a prime ideal and $\Omega$ is an integral domain. 
In addition:
\[
\Omega [x^{-1}]=k[x,x^{-1},z_0,z_1]\cong k[x,x^{-1}]^{[2]} \implies {\rm frac}(\Omega )\cong k^{(3)}
\]
Define ideals in $\Omega$ by:
\[
\mathfrak{p} = (z_0,z_1,z_2,\hdots ) \quad {\rm and}\quad \mathfrak{m}=(x,z_0)
\]
\begin{lemma} $\mathfrak{m}$ is a maximal ideal of $\Omega$, and $x\,\Omega$ and $\mathfrak{p}$ are prime ideals properly contained in $\mathfrak{m}$. In addition:
\begin{itemize}
\item [{\bf (a)}] $z_i+z_0^{2^i}\in x\,\Omega$ for each $i\ge 1$, and
\item [{\bf (b)}] $z_i\not\in x\,\Omega$ for each $i\ge 0$.
\end{itemize}
\end{lemma}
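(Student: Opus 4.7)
\smallskip
\noindent\textbf{Proof plan.}
The strategy is to construct a single $k$-algebra homomorphism $\phi:\Omega\to k[y]$, where $y$ is an indeterminate, and then extract all five assertions from it. Define $\phi$ on generators by
\[
\phi(x)=0,\qquad \phi(z_0)=y,\qquad \phi(z_i)=-y^{2^i}\quad(i\ge 1).
\]
To see this is well defined, lift to $\tilde\phi:\Lambda\to k[y]$ and verify that every generator $f_n=X^{2^n}Z_{n+1}+Z_n+Z_{n-1}^2$ of the kernel maps to $0$: the first term is killed because $\tilde\phi(X)=0$, and $\tilde\phi(Z_n+Z_{n-1}^2)=-y^{2^n}+(-y^{2^{n-1}})^2=0$ (the case $n=1$ reading $-y^2+y^2=0$). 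Since $I=\bigcup_n I_n$ and the $f_n$ generate $I$, $\phi$ descends to $\Omega$.

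For part (a), reduce the relation $x^{2^i}z_{i+1}+z_i+z_{i-1}^2=0$ modulo $x\Omega$ to obtain $z_i\equiv -z_{i-1}^2\pmod{x\Omega}$ for all $i\ge 1$, and then an immediate induction on $i$ gives $z_i\equiv -z_0^{2^i}\pmod{x\Omega}$, i.e.\ $z_i+z_0^{2^i}\in x\Omega$. Part (b) is then instantaneous from the existence of $\phi$: since $\phi(x)=0$ we have $x\Omega\subseteq\ker\phi$, whereas $\phi(z_i)\in\{y,\,-y^{2^i}\}\setminus\{0\}$, so $z_i\notin x\Omega$.

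Primality of $x\Omega$ follows by identifying $\Omega/x\Omega$ with $k[y]$: by (a) every $\bar z_i$ is a polynomial in $\bar z_0$, so there is a surjection $k[T]\twoheadrightarrow \Omega/x\Omega$ sending $T\mapsto \bar z_0$; composing with the induced $\bar\phi:\Omega/x\Omega\to k[y]$ yields the isomorphism $k[T]\to k[y]$, $T\mapsto y$, forcing both maps to be isomorphisms. Primality of $\mathfrak{p}$ is even simpler: modding $\Lambda$ by $(Z_0,Z_1,\dots)$ kills every $f_n$ automatically, so $\Omega/\mathfrak{p}\cong k[x]$, an integral domain. Maximality of $\mathfrak{m}$ comes from $\Omega/\mathfrak{m}\cong (\Omega/x\Omega)/(\bar z_0)\cong k[y]/(y)\cong k$.

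Finally, the strict inclusions: $x\Omega\subsetneq \mathfrak{m}$ because $z_0\in\mathfrak{m}\setminus x\Omega$ by (b), while $\mathfrak{p}\subsetneq\mathfrak{m}$ because $x\in\mathfrak{m}$ is sent to the nonzero indeterminate in $\Omega/\mathfrak{p}\cong k[x]$, and conversely part (a) combined with $x,z_0\in\mathfrak{m}$ shows $z_i\in\mathfrak{m}$ for every $i\ge 0$, giving $\mathfrak{p}\subseteq\mathfrak{m}$. The only step demanding any thought is the verification that $\phi$ respects all the relations $f_n$, but the values of $\phi$ on the $Z_i$ are forced inductively by those relations modulo $x$, so the check is routine; everything else is bookkeeping.
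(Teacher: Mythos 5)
Your proof is correct, and it reaches the key fact $\Omega/x\Omega\cong k^{[1]}$ by a different mechanism than the paper. The paper works upstairs in $\Lambda$: it introduces the subring $R=k[X,\,Z_i+Z_{i-1}^2]_{i\ge 1}$ with maximal ideal $J=(X,\,Z_i+Z_{i-1}^2)_{i\ge 1}$, observes that $\Lambda=R[Z_0]\cong R^{[1]}$ and that $J\Lambda=X\Lambda+I$, and reads off $\Omega/x\Omega\cong (R/J)[\bar Z_0]=k[\bar z_0]$ in one stroke; parts (a) and (b) are then deduced by induction, with (b) using primality of $x\Omega$ to pass from $z_i\equiv -z_0^{2^i}$ to $z_i\notin x\Omega$. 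You instead build the explicit retraction $\phi:\Omega\to k[y]$, $x\mapsto 0$, $z_0\mapsto y$, $z_i\mapsto -y^{2^i}$, check it kills each $f_n$, and pair it with the surjection $k[T]\twoheadrightarrow\Omega/x\Omega$ supplied by part (a); the composite being the isomorphism $T\mapsto y$ forces $\Omega/x\Omega\cong k^{[1]}$. What your route buys is self-containedness and a slightly cleaner (b): you get $z_i\notin x\Omega$ directly from $\phi(z_i)\ne 0$ without first knowing $x\Omega$ is prime, whereas the paper's route leans on the (standard but unproved) claim that $X,\,Z_0,\,Z_1+Z_0^2,\,Z_2+Z_1^2,\dots$ is a polynomial coordinate system on $\Lambda$. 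What the paper's route buys is that the isomorphism $\Omega/x\Omega\cong k^{[1]}$ appears without any computation on relations, since the triangular change of variables absorbs them. The remaining assertions (primality of $\mathfrak p$ via $\Omega/\mathfrak p\cong k[x]$, maximality of $\mathfrak m$, and the strict inclusions) are handled essentially identically in both arguments.
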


\begin{proof} Define the subring $R\subset \Lambda$ and maximal ideal $J\subset R$ by:
\[
R=k[X,Z_i+Z_{i-1}^2]_{i\ge 1} \quad {\rm and}\quad J=(X,Z_i+Z_{i-1}^2)_{i\ge 1}
\]
Then $\Lambda = R[Z_0]\cong R^{[1]}$ and $J\Lambda = X\Lambda +I$. It follows that
\[
\Omega/x\,\Omega \cong \Lambda/J\Lambda = (R/J)[\bar{Z}_0] = k[\bar{Z}_0]\cong k^{[1]} \implies \Omega/x\,\Omega = k[\bar{z}_0]\cong k^{[1]}
\]
where $\bar{Z}_0$ is the image of $Z_0$ in $\Lambda/J\Lambda$ and $\bar{z}_0$ is the image of $z_0$ in $\Omega/x\,\Omega$. Therefore, $x\,\Omega$ is a prime ideal and $z_0\not\in x\,\Omega$.
For each $i\ge 1$, we have $z_i+z_{i-1}^2\in x\,\Omega$. Parts (a) and (b) now follow by induction.
Since $f_i\in  (Z_0,Z_1,Z_2,\hdots )$ for each $i\ge 1$, we see hat 
\[
\Omega/\mathfrak{p}\cong k[x]\cong k^{[1]}
\]
and $\mathfrak{p}$ is prime. Since $\Omega/\mathfrak{m}\cong k$, $\mathfrak{m}$ is maximal. Part (a) shows $\mathfrak{p}\subset\mathfrak{m}$. Since neither $x\,\Omega$ nor $\mathfrak{p}$ is maximal, their containment in $\mathfrak{m}$ is proper.
\end{proof}

Define a $\Z$-grading $\Omega =\bigoplus_{d\in\Z}\Omega_d$ by letting $x$ and each $z_i$ be homogeneous, $\deg x=-1$ and $\deg z_i=2^i$. 
A {\bf monomial} $\mu\in\Omega$ is of the form $\mu =x^r\prod_{i\in\N}z_i^{e_i}$ for $r,e_i\in\N$. Note that each monomial is homogeneous, and that the set of all monomials forms a multiplicative monoid. Define:
\[
\textstyle |\mu |=\min \Big\{ \sum_i\epsilon_i\, \Big |\, \mu=x^s\prod_iz_i^{\epsilon_i}\, ,\, s,\epsilon_i\in\N \Big\}
\]
Given $d\in\N$, let $d=\sum_{i\ge 0}d_i2^i$ be its binary expansion, where $d_i\in\{ 0,1\}$ for each $i$ and $d_i=0$ for all but finitely many $i$. 
Define the function: 
\[
\sigma :\N\to\Omega \,\, ,\,\, d\mapsto F_d=\prod_{i\ge 0}z_i^{d_i}
\]
Since $F_d\in\Omega_d$ for $d\in\N$, it follows that $\sigma$ is injective. 
Note that, since $z_i\not\in x\,\Omega$ for all $i\ge 0$, $F_d\not\in x\,\Omega$ for all $d\in\N$.

\begin{lemma}\label{basis} Given $d\in\Z$, the set $\mathcal{B}_d=\{ x^mF_n\, |\, m,n\in\N\, ,\, n-m=d\}$ is a $k$-basis of $\Omega_d$.
\end{lemma}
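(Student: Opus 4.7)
The plan is to verify spanning and linear independence of $\mathcal{B}_d$ separately. Each $x^m F_n \in \mathcal{B}_d$ has degree $-m + n = d$, so $\mathcal{B}_d \subseteq \Omega_d$. For spanning, every element of $\Omega_d$ is a $k$-linear combination of homogeneous monomials $x^r \prod_j z_j^{e_j}$; whenever some $e_{j-1} \ge 2$ with $j \ge 1$, applying the relation $z_{j-1}^2 = -z_j - x^{2^j} z_{j+1}$ once rewrites the monomial as a sum of two monomials in which the total $\sum_i e_i$ has strictly decreased (by $1$) while the degree is preserved. Iterating terminates at $k$-linear combinations of ``squarefree'' monomials $x^{r'} \prod_j z_j^{d_j}$ with every $d_j \in \{0, 1\}$, i.e., of elements $x^{r'} F_{n'}$; since the degree $n' - r' = d$ is preserved throughout, $\mathcal{B}_d$ spans $\Omega_d$.

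For linear independence, I pass to the localization $\Omega[x^{-1}] = k[x, x^{-1}, z_0, z_1]$, into which the domain $\Omega$ embeds. Setting $U_i := x^{2^i} z_i$, one has $\Omega[x^{-1}] = k[x, x^{-1}][U_0, U_1]$, and the defining relations become $U_{i+1} + U_i + U_{i-1}^2 = 0$ for $i \ge 1$; by induction each $U_i$ equals a polynomial $U_i^* \in k[U_0, U_1]$. Writing $n_i$ for the $i$-th binary digit of $n$, a direct computation gives
\[
x^m F_n \;=\; x^{-d}\,V_n^*, \qquad V_n^* := \prod_{i :\, n_i = 1} U_i^* \in k[U_0, U_1].
\]
Since $x^{-d}$ is a unit of $\Omega[x^{-1}]$, linear independence of $\mathcal{B}_d$ reduces to showing that $\{V_n^*\}_{n \in \N}$ is $k$-linearly independent in $k[U_0, U_1]$.

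To prove this, endow $k[U_0, U_1]$ with the weight grading $w(U_0) = 1$, $w(U_1) = 2$. A short induction, using that $k[U_0, U_1]$ is a domain so that top-weight parts of products do not vanish, shows $w(U_i^*) = 2^{\lceil i/2 \rceil}$ for every $i \ge 0$, whence $w(V_n^*) = \sum_{i :\, n_i = 1} 2^{\lceil i/2 \rceil}$. The rewriting rule $U_{i-1}^2 \mapsto -U_i - U_{i+1}$ never raises $w$, so the same reduction used for spanning proves that $\{V_n^* : w(V_n^*) \le e\}$ spans the finite-dimensional subspace $F^e := \{p \in k[U_0, U_1] : w(p) \le e\}$ for every $e \ge 0$. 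Using $\prod_{k \ge 0}(1 + t^{2^k}) = 1/(1-t)$ one computes
\[
\sum_{n \in \N} t^{w(V_n^*)} \;=\; (1+t)\prod_{k \ge 1}(1+t^{2^k})^2 \;=\; \frac{1+t}{(1-t^2)^2} \;=\; \frac{1}{(1-t)(1-t^2)},
\]
which is exactly the Hilbert series of $k[U_0, U_1]$ with respect to $w$. Hence $\#\{n : w(V_n^*) \le e\} = \dim_k F^e$, forcing the spanning set to be a basis of $F^e$; letting $e \to \infty$ yields that $\{V_n^*\}_{n \in \N}$ is a $k$-basis of $k[U_0, U_1]$, completing the proof. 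The main obstacle is the exact determination $w(U_i^*) = 2^{\lceil i/2 \rceil}$: top-weight components in the nonlinear recursion can cancel at various steps (for instance, $U_1^2$ cancels when one computes $U_4^*$), so one must exhibit a surviving monomial --- e.g., an inductive argument that $-U_0^{2^k}$ persists in the top-weight part of $U_{2k}^*$ for $k \ge 1$ --- to rule out further collapse and thus make the Hilbert series identity tight.
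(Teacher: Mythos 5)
Your proof is correct, but the linear-independence half takes a genuinely different (and much heavier) route than the paper's; the spanning half is essentially identical to the paper's rewriting induction, using that $z_{j-1}^2=-z_j-x^{2^j}z_{j+1}$ preserves degree and strictly decreases $\sum_i e_i$. For independence, the paper needs only the fact, already established in the preceding lemma, that $F_n\notin x\Omega$ for all $n$: in a putative relation $\sum_i c_i x^{m_i}F_{n_i}=0$ one divides by the minimal power of $x$ and reads off $F_{n_1}\in x\Omega$, a contradiction --- three lines. You instead embed $\Omega$ into $\Omega[x^{-1}]=k[x,x^{-1},z_0,z_1]$, substitute $U_i=x^{2^i}z_i$ so that the relations become $U_{i+1}=-U_i-U_{i-1}^2$ inside $k[U_0,U_1]$, and show that the squarefree products $V_n^*$ form a basis of $k[U_0,U_1]$ by a weighted Hilbert-series count. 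I checked the pieces: the identity $x^mF_n=x^{-d}V_n^*$, the generating-function computation $(1+t)\prod_{k\ge1}(1+t^{2^k})^2=1/\big((1-t)(1-t^2)\big)$, and the conclusion that a spanning set of cardinality equal to the dimension is a basis all hold. The one step you leave as a sketch, $w(U_i^*)=2^{\lceil i/2\rceil}$, does close along the lines you indicate: for odd indices the top-weight part of $U_{2k+1}^*$ is $-U_1^{2^k}$ (no cancellation is possible since the two terms of the recursion have different weights there), and for even indices the coefficient of $U_0^{2^k}$ in the top part of $U_{2k}^*$ equals $-c^2$ where $c=-1$ is the corresponding coefficient in $U_{2k-2}^*$ (the term $-U_{2k-1}^*$ contributes no pure power of $U_0$ at top weight), so it remains $-1$ in every characteristic. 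What your approach buys is a stronger byproduct --- an explicit $k$-basis $\{V_n^*\}$ of $k[U_0,U_1]$, hence a description of $\Omega[x^{-1}]$ --- but at the cost of a delicate cancellation analysis that the paper avoids entirely by exploiting the primality of $x\Omega$.
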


\begin{proof} Fix $d\in\Z$. 

Suppose that $c_1x^{m_1}F_{n_1}+\cdots +c_sx^{m_s}F_{n_s}=0$ for $c_i\in k^*$, $s\ge 2$, and distinct $(m_i,n_i)\in\N^2$ with $n_i-m_i=d$. 
Note that $m_1,\hdots ,m_s$ are distinct; we may assume $m_1=\min_im_i$.
Then $F_{n_1}\in x\,\Omega$ in contradiction to the above stated property.
Therefore, elements of $\mathcal{B}_d$ are $k$-linearly independent.

Let $V\subset\Omega_d$ be the subspace spanned by $\mathcal{B}_d$. We proceed by induction on $|\mu |$ to show that every monomial $\mu\in \Omega_d$ is in $V$. From this it follows that $V=\Omega_d$.

Let $\mu\in\Omega_d$ be a monomial where $\mu =x^r\prod_iz_i^{e_i}$ and $|\mu |=\sum_ie_i$. 

If $|\mu |=0$, then $\mu =x^rF_0\in V$. 

Assume that $\lambda\in V$ whenever $\lambda\in\Omega_d$ is a monomial with $0\le |\lambda |< |\mu |$. 
If $\max_ie_i\le 1$, then $\mu =x^rF_{d+r}\in V$. Otherwise, let $e_m=\max_ie_i\ge 2$ and write $e_m=2a+b$ for integers $a\ge 1$ and $b\in\{ 0,1\}$. 
Let $P=\mu/z_m^{e_m}$. Then:
\begin{eqnarray*}
\mu = Pz_m^{e_m} &=& Pz_m^b(z_m^2)^a \\
&=& (-1)^aPz_m^b(x^{2^{m+1}}z_{m+2}+z_{m+1})^a \\
&=& (-1)^aPz_m^b\sum_{j=0}^a{a\choose j}x^{j2^{m+1}}z_{m+2}^jz_{m+1}^{a-j} \\
&=& (-1)^a\sum_{j=0}^a{a\choose j}x^{j2^{m+1}}Pz_m^bz_{m+1}^{a-j}z_{m+2}^j 
\end{eqnarray*}
For each $j$, we see that $M_j:=x^{j2^{m+1}}Pz_m^bz_{m+1}^{a-j}z_{m+2}^j$ is a monomial in $\Omega_d$ and that:
\[
\textstyle |M_j|\le (\sum_{i\ne m}e_i) + b+(a-j)+j < \sum_ie_i = |\mu |
\]
By the inductive hypothesis, $M_j\in V$. It follows that $\mu\in V$.
\end{proof}

\begin{lemma}\label{main}
$\bigcap_{m\ge 0}x^m\Omega=(0)$
\end{lemma}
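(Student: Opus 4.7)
The plan is to exploit the $\Z$-grading of $\Omega$ together with the explicit $k$-basis $\mathcal{B}_d$ of $\Omega_d$ furnished by \emph{Lemma~\ref{basis}}. Since $x$ is homogeneous, the ideal $x^m\Omega$ is homogeneous for every $m\ge 0$, so the homogeneous components of any element of $\bigcap_{m\ge 0} x^m\Omega$ again lie in this intersection. Thus I would first reduce to the case where the element $f$ is homogeneous of some degree $d\in\Z$.

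Next I would pin down the degree-$d$ part of $x^m\Omega$. Because $\deg x=-1$, multiplication by $x^m$ carries $\Omega_{d+m}$ into $\Omega_d$, and the degree-$d$ piece of $x^m\Omega$ equals $x^m\Omega_{d+m}$. Applying \emph{Lemma~\ref{basis}} to $\Omega_{d+m}$ and then multiplying the resulting basis by $x^m$, one sees that $x^m\Omega_{d+m}$ has $k$-basis $\{x^M F_N : M,N\in\N,\ N-M=d,\ M\ge m\}$, which is a subset of $\mathcal{B}_d$.

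Finally I would write $f=\sum_{i=1}^s c_i x^{m_i}F_{n_i}$ in $\mathcal{B}_d$, with $c_i\in k^*$, distinct pairs $(m_i,n_i)\in\N^2$, and $n_i-m_i=d$. The condition $f\in x^m\Omega$ then forces $m_i\ge m$ for each $i$, by the linear independence assertion of \emph{Lemma~\ref{basis}}. Choosing $m>\max_i m_i$ when $s\ge 1$ yields a contradiction, so $s=0$ and $f=0$.

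The only step that demands real care is the description of the degree-$d$ part of $x^m\Omega$ in terms of $\mathcal{B}_d$; once this is in hand, the finiteness of the representation of $f$ in the basis delivers the conclusion at once. No further information about the structure of $\Omega$ beyond \emph{Lemma~\ref{basis}} is needed.
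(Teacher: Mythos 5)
Your proof is correct and follows essentially the same route as the paper's: reduce to a homogeneous element via the gradedness of $\bigcap_{m\ge 0}x^m\Omega$, expand in the basis $\mathcal{B}_d$ of Lemma~\ref{basis}, and use finiteness of the expansion to force the element to vanish. The only cosmetic difference is that you identify $(x^m\Omega)_d = x^m\Omega_{d+m}$ with the span of the basis vectors $x^MF_N$ having $M\ge m$ and invoke uniqueness of coefficients, whereas the paper isolates the minimal-index term and derives the contradiction $F_{d+m}\in x\Omega$ directly; both hinge on the same linear independence statement.
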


\begin{proof}
Since $J=\bigcap_{m\ge 0}x^m\Omega$ is a graded ideal, it suffices to show that $J \cap \Omega_d = 0$ for every $d \in \Z$.
Arguing by contradiction, assume that $0 \neq f \in J \cap \Omega_d$ for some $d \in \Z$. Then, by {\it Lemma\,\ref{basis}},
$f = \sum_{i\ge\max(0,-d)} a_i x^i F_{d+i}$ where $a_i \in k$ for all $i$. Let $m = \min\setspec{ i }{ a_i \neq 0 }$.
Then $\sum_{i>m}a_i x^i F_{d+i} \in x^{m+1}\Omega$ and (since $f \in J$) $f \in x^{m+1}\Omega$, so
$x^mF_{d+m} \in x^{m+1}\Omega$, so $F_{d+m} \in x\Omega$, a contradiction.
\end{proof}

\begin{theorem}\label{UFD-thm} $\Omega$ is a UFD.
\end{theorem}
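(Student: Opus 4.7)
The plan is to apply \emph{Proposition}~\ref{ckjpq9wxdiqpows}: I will show that $\Omega$ is a Krull domain with $\Cl(\Omega)=0$. Both properties will be transferred from $\Omega[x^{-1}]\cong k[x,x^{-1}][z_0,z_1]\cong k[x,x^{-1}]^{[2]}$, a polynomial ring over a Laurent polynomial ring over a field; this is a UFD, hence a Krull domain with trivial class group.

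First, I would verify that $\Omega_{x\Omega}$ is a DVR. The computation $\Omega/x\Omega\cong k[\bar z_0]$ carried out earlier shows that $x$ is a prime element of $\Omega$, and \emph{Lemma}~\ref{main} gives $\bigcap_{m\ge0}x^m\Omega=(0)$. Thus the function $v:\Omega\setminus\{0\}\to\N$, $v(a)=\max\{m:a\in x^m\Omega\}$, is well defined. Primality of $x$ makes $v$ multiplicative, and $v(a+b)\ge\min(v(a),v(b))$ is immediate, so $v$ extends to a discrete valuation on $\Frac(\Omega)$. Its valuation ring is easily identified with $\Omega_{x\Omega}$: every element $a/b \in \Frac(\Omega)$ with $v(a/b)\ge 0$ may be rewritten as $a''/b'$ with $a'',b' \in \Omega$ and $b' \notin x\Omega$, by factoring out the right power of $x$ from numerator and denominator.

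Next, I would apply \emph{Proposition}~\ref{Krull-descent}(b) with the single prime $p_1=x$ and $S=\{x^n:n\in\N\}$. Since $S^{-1}\Omega=\Omega[x^{-1}]$ is a Krull domain and $\Omega_{x\Omega}$ is a DVR, the proposition implies that $\Omega$ is a Krull domain. Because $x$ is a prime element of $\Omega$, \emph{Proposition}~\ref{Krull-localization} then gives
\[
\Cl(\Omega)\cong\Cl(\Omega[x^{-1}])=0,
\]
and \emph{Proposition}~\ref{ckjpq9wxdiqpows} yields the conclusion.

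All the genuine work has already been carried out in \emph{Lemmas}~\ref{basis} and~\ref{main}: the explicit homogeneous $k$-basis $\mathcal{B}_d$ of each $\Omega_d$ is what powers the separation statement $\bigcap_m x^m\Omega=(0)$, which is the only nontrivial ingredient in this approach. Once separation and the primality of $x$ are in hand, the identification of $\Omega_{x\Omega}$ as a DVR and the subsequent invocations of the Krull-domain machinery of \emph{Section}~\ref{preliminaries} are routine, so the main obstacle lies upstream of the statement itself rather than in the present argument.
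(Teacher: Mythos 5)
Your proof is correct and follows essentially the same route as the paper: both use Lemma~\ref{main} to see that $\Omega_{x\Omega}$ is a DVR, then Proposition~\ref{Krull-descent} with the single prime $x$ to descend the Krull property from $\Omega[x^{-1}]$. The only (immaterial) difference is the last step, where the paper invokes Nagata's Criterion while you instead use Proposition~\ref{Krull-localization} to get $\Cl(\Omega)\cong\Cl(\Omega[x^{-1}])=0$ and conclude via Proposition~\ref{ckjpq9wxdiqpows}; both are standard and equally valid here.
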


\begin{proof}
Let $\qgoth = x\Omega$.
By {\it Lemma\,\ref{main}}, $\Omega_{\qgoth}$ is a discrete valuation ring. Since $\Omega = \Omega[x^{-1}]\cap\Omega_{\qgoth}$,
 {\it Prop.\,2.4} implies that $\Omega$ is a Krull domain. Since $\Omega [x^{-1}]$ is a UFD and $x$ is prime, it follows by Nagata's Criterion that $\Omega$ is a UFD.
\end{proof}

\begin{corollary} $z_0\Omega$ is a prime ideal properly contained in $\mathfrak{p}$. 
\end{corollary}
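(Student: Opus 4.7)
The plan is to prove two things: that $z_0\Omega$ is a prime ideal of $\Omega$, and that the containment $z_0\Omega \subseteq \mathfrak{p}$ is strict.

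For the primeness, the natural approach is to localize at $x$. We have already noted that $\Omega[x^{-1}] = k[x,x^{-1},z_0,z_1]$, so $z_0$ generates a prime ideal of $\Omega[x^{-1}]$, whence $\mathfrak{q} := z_0 \Omega[x^{-1}] \cap \Omega$ is a prime ideal of $\Omega$ containing $z_0\Omega$. The task reduces to proving $\mathfrak{q} = z_0\Omega$. Given $u \in \mathfrak{q}$, the relation $u/z_0 \in \Omega[x^{-1}]$ yields an identity $x^n u = z_0 w$ for some $w \in \Omega$ and $n \ge 0$; pick such a pair $(w,n)$ with $n$ minimal. If $n \ge 1$, reduction modulo $x$ gives $\bar z_0 \bar w = 0$ in $\Omega/x\Omega \cong k[\bar z_0]$, a polynomial ring over $k$ in the transcendental class $\bar z_0$; since this ring is a domain and $\bar z_0 \neq 0$, we conclude $\bar w = 0$, so $w \in x\Omega$, contradicting the minimality of $n$. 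Hence $n = 0$ and $u \in z_0\Omega$, so $\mathfrak{q} = z_0\Omega$ is prime.

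For the strict containment, it suffices to exhibit an element of $\mathfrak{p}$ outside $z_0\Omega$. The obvious candidate is $z_1$: clearly $z_1 \in \mathfrak{p}$, and any equation $z_1 = z_0 h$ with $h \in \Omega$ would persist in $\Omega[x^{-1}] = k[x^{\pm 1}, z_0, z_1]$, where $z_1$ is manifestly not divisible by $z_0$ in this polynomial ring.

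I expect the only real obstacle to be the contraction argument in the second paragraph; the rest is bookkeeping. All the ingredients needed for that argument — the presentation of $\Omega[x^{-1}]$, the isomorphism $\Omega/x\Omega \cong k[\bar z_0]$, and the fact that $\bar z_0 \neq 0$ — have already been established earlier in the section, so the proof should be short.
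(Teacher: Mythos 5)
Your proof is correct, but it reaches the primeness of $z_0\Omega$ by a genuinely different route than the paper. The paper's argument is: $z_0$ is irreducible in $\Omega[x^{-1}]=k[x,x^{-1},z_0,z_1]$ and $z_0\notin x\Omega$, hence $z_0$ is irreducible in $\Omega$; then \emph{Theorem~4.8} ($\Omega$ is a UFD) is invoked to upgrade irreducible to prime. You instead show directly that $z_0\Omega$ equals the contraction $z_0\Omega[x^{-1}]\cap\Omega$ of a prime ideal, via the saturation argument: from $x^nu=z_0w$ with $n\ge 1$ minimal, reduce modulo $x$ in $\Omega/x\Omega\cong k[\bar z_0]$ (a domain with $\bar z_0\ne 0$) to force $w\in x\Omega$ and contradict minimality. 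This is sound, and it has the advantage of not depending on \emph{Theorem~4.8} at all --- only on the two facts already established in the lemma preceding it (the presentation of $\Omega[x^{-1}]$ and the isomorphism $\Omega/x\Omega\cong k^{[1]}$ with $z_0\notin x\Omega$). The paper's version is shorter given that the UFD theorem is already in hand, which is why it is stated as a corollary; yours is more self-contained and would let the primeness of $z_0\Omega$ be used independently of (indeed, before) the UFD result. The strict-containment argument via $z_1\notin z_0\Omega[x^{-1}]$ is identical to the paper's.
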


\begin{proof} Since $\Omega [x^{-1}]=k[x,x^{-1},z_0,z_1]\cong k[x,x^{-1}]^{[2]}$, $z_0$ is irreducible in $\Omega [x^{-1}]$. 
Since $z_0\not\in x\,\Omega$, $z_0$ is irreducible in $\Omega$. By {\it Theorem\,\ref{UFD-thm}}, $\Omega$ is a UFD. Therefore, $z_0$ is prime in $\Omega$. 

The inclusion $z_0\Omega\subset\mathfrak{p}$ is clear. 
Suppose that $z_1\in z_0\Omega$. Then 
\[
z_1\in z_0\Omega[x^{-1}] \,\, {\rm where}\,\, \Omega[x^{-1}]=k[x,x^{-1},z_0,z_1]\cong k[x,x^{-1}]^{[2]}
\]
which is not possible. Therefore, $z_0\Omega\subsetneq\mathfrak{p}$. 
\end{proof}

\begin{theorem} $\dim\Omega = 3$ and ${\rm ht}(\mathfrak{m})=3$. Consequently, $\Omega$ is not noetherian. 
\end{theorem}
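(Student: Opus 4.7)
The plan is to establish simultaneously ${\rm ht}(\mathfrak{m})=3$ and $\dim\Omega=3$, and then deduce non-noetherianness from Krull's height theorem. The lower bound ${\rm ht}(\mathfrak{m})\ge3$ is essentially already in hand: the preceding lemma and corollary furnish the strict chain of prime ideals $(0)\subsetneq z_0\Omega\subsetneq\mathfrak{p}\subsetneq\mathfrak{m}$.

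For the upper bound $\dim\Omega\le3$, I will stratify the primes $P$ of $\Omega$ according to whether $x\in P$. If $x\notin P$, then every prime strictly below $P$ also avoids $x$, so the height-preserving bijection between primes of $\Omega$ not containing $x$ and primes of $\Omega[x^{-1}]=k[x,x^{-1},z_0,z_1]$ yields ${\rm ht}(P)\le\dim\Omega[x^{-1}]=3$. If $x\in P$, then $P\supseteq x\Omega$; since $\Omega$ is a UFD with $x$ prime, $x\Omega$ has height $1$, and $\Omega/x\Omega\cong k[z_0]$ has dimension $1$, so either $P=x\Omega$ (height $1$) or $P$ is a maximal ideal of the form $(x,f(z_0))$ with $f\in k[z_0]$ irreducible.

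The main obstacle is bounding the height of such a maximal ideal, in particular of $\mathfrak{m}$ itself. Given $P=(x,f(z_0))$ and a chain $(0)\subsetneq Q_1\subsetneq\cdots\subsetneq Q_n=P$, let $j$ be the largest index with $x\notin Q_j$. The crucial auxiliary claim, which I will verify directly, is that no maximal ideal $\tilde Q$ of $\Omega[x^{-1}]$ satisfies $\tilde Q\cap\Omega\subset P$: by the Nullstellensatz applied to the finitely generated $k$-algebra $\Omega[x^{-1}]$, the intersection $\tilde Q\cap k[x,x^{-1}]$ is generated by an irreducible $g(x)\in k[x]$ with $g(0)\ne0$, yet $g(x)\in P$ would force its image $g(0)$ in the field $\Omega/P\cong k[z_0]/(f(z_0))$ to vanish, a contradiction. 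This bounds ${\rm ht}(Q_j)\le2$, and combined with the observation that any chain above $x\Omega$ has length at most $1$ (since $\Omega/x\Omega$ is one-dimensional), a routine case analysis on $j$ yields $n\le3$.

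Once $\dim\Omega=3={\rm ht}(\mathfrak{m})$ is proved, the non-noetherian conclusion is immediate from Krull's height theorem: in a noetherian ring a prime minimal over an ideal generated by $r$ elements has height at most $r$, so if $\Omega$ were noetherian the $2$-generated ideal $\mathfrak{m}=(x,z_0)$ would have height at most $2$, contradicting ${\rm ht}(\mathfrak{m})=3$.
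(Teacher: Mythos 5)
Your proof is correct, but your route to the upper bound $\dim\Omega\le 3$ is genuinely different from the paper's. The paper disposes of that bound in one line by citing the general inequality $\dim A\le{\rm tr.deg}_k\,A$ valid for an arbitrary $k$-algebra $A$ (Kemper, Theorem 5.5), applied via ${\rm frac}(\Omega)=k^{(3)}$; the lower bound via the chain $(0)\subsetneq z_0\Omega\subsetneq\mathfrak{p}\subsetneq\mathfrak{m}$ and the appeal to Krull's height theorem are identical in both arguments. You instead stratify $\Spec\Omega$ according to whether a prime contains $x$, exploiting the two structural facts $\Omega[x^{-1}]\cong k[x,x^{-1}]^{[2]}$ (an affine domain of dimension $3$) and $\Omega/x\Omega\cong k^{[1]}$, together with a Nullstellensatz argument showing that a prime $Q$ with $x\notin Q$ lying under a maximal ideal $P\ni x$ cannot extend to a maximal ideal of $\Omega[x^{-1}]$, whence ${\rm ht}(Q)\le 2$; that argument is sound. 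What this buys is independence from the dimension-versus-transcendence-degree theorem for non-affine algebras, at the cost of more bookkeeping. One step of your ``routine case analysis'' deserves to be made explicit: the two bounds $j\le 2$ and $n-j\le 2$ by themselves only give $n\le 4$. To reach $n\le 3$ you must also invoke ${\rm ht}(x\Omega)=1$ (which you did record): when $j\ge 1$, the prime $Q_{j+1}$ has height at least $j+1\ge 2$, so it cannot equal $x\Omega$ and must therefore be one of the maximal ideals containing $x$, forcing $Q_{j+1}=P$ and $n=j+1\le 3$. With that observation the argument closes correctly.
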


\begin{proof} 
Since ${\rm frac}(\Omega )=k^{(3)}$ we see that ${\rm tr.deg}_k(\Omega )=3$. 
Since $\dim\Omega\le {\rm tr.deg}_k(\Omega )$ (see, for example, \cite{Kemper.11}, Theorem 5.5), 
we have $\dim\Omega\le 3$.
Since
\[
(0)\subsetneq z_0\Omega\subsetneq \mathfrak{p}\subsetneq\mathfrak{m}
\]
is a chain of primes in $\Omega$, it follows that ${\rm ht}(\mathfrak{m})=3$ and $\dim\Omega = 3$. Since $\mathfrak{m}$ is generated by two elements, 
it follows from the Krull dimension theorem (\cite{Eisenbud.95}, Theorem 10.2) that $\Omega$ cannot be noetherian. 
\end{proof}


\section{Application II: Affine UFDs Defined by Trinomial Relations}\label{app-two}

Let $k$ be a field. Assume that the following data are given.
\begin{itemize}
\item [(D.1)] An integer $n\ge 2$ and partition $n=n_0+n_1+\cdots +n_r$ where $r\ge 2$ and $n_i\ge 1$ for $0\le i\le r$. This induces a partition of variables $t_{ij}$ in the polynomial ring $k^{[n]}$:
\[
k^{[n]}=k[T_0,T_1,\hdots ,T_r] \,\,  ,\,\,  {\rm where}\,\, T_i=\{ t_{i1},\cdots ,t_{in_i}\}  \,\, ,\,\, 0\le i\le r
\]
\item [(D.2)] A sequence $\beta_0,\hdots ,\beta_r$ where $\beta_i=(\beta_{i1},\hdots ,\beta_{in_i})\in\Z_+^{n_i}$, $0\le i\le r$ satisfy:
\begin{quote}
\medskip
If $d_i=\gcd (\beta_{i1},\hdots ,\beta_{in_i})$, $0\le i\le r$, then $d_0,\hdots ,d_r$ are pairwise relatively prime. 
\end{quote}
\medskip
\noindent This induces a sequence $T^{\beta_0}_0,\hdots ,T^{\beta_r}_r\in k[T_0,\hdots ,T_r]$ of monomials: 
\[
T_i^{\beta_i}= t_{i1}^{\beta_{i1}}\cdots t_{in_i}^{\beta_{in_i}} \,\, ,\,\, 0\le i\le r
\]
\item [(D.3)] A sequence of distinct elements $\lambda_2,\hdots ,\lambda_r\in k^*$.
\end{itemize}

\begin{theorem}\label{trinomial} Given the data and notation in {\rm (D.1), (D.2)} and {\rm (D.3)} above, the ring
\[
B:=k[T_0,\hdots ,T_r]/(T_0^{\beta_0}+\lambda_iT_1^{\beta_1}+T_i^{\beta_i})_{2\le i\le r}
\]
is an affine rational UFD of dimension $n-r+1$ over $k$, and the image of $T_0^{\beta_0}+\mu T_1^{\beta_1}$ in $B$ is prime for each $\mu\in k^*\setminus\{ \lambda_2,\hdots ,\lambda_r\}$.
\end{theorem}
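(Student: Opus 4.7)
The plan is to build $B$ by successively adjoining the variable groups $T_2, \dots, T_r$ to the polynomial ring $A_1 := k[T_0, T_1]$, applying Theorem~\ref{Fifth-Criterion} at each step. Specifically, I set $A_i := A_{i-1}[T_i]/(T_i^{\beta_i} + T_0^{\beta_0} + \lambda_i T_1^{\beta_1})$ for $i = 2, \dots, r$, so $A_r = B$, and prove by induction on $i$ the conjunction
\begin{itemize}
\item[$(I_i)$] $A_i$ is a noetherian UFD, and
\item[$(II_i)$] for each $\mu \in k^* \setminus \{\lambda_2,\dots,\lambda_i\}$, the element $F_\mu := T_0^{\beta_0} + \mu T_1^{\beta_1}$ is prime in $A_i$.
\end{itemize}
Taking $i = r$ delivers both conclusions of the theorem, and the dimension and rationality come as a bonus from Theorem~\ref{Fifth-Criterion}(b), which at each step gives $\Frac(A_i) \cong \Frac(A_{i-1})^{(n_i - 1)}$; iterating, $\Frac(B) \cong k^{(n - r + 1)}$ and hence $\dim B = n - r + 1$.

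The uniform tool for applying the Fifth Criterion at step $i$ is a tailored $\Z$-grading on $A_{i-1}$. Setting $\omega_i := d_0 d_1 \cdots d_{i-1}$, I grade each $t_{k\ell}$ (for $k < i$) by an integer so that $\deg(T_k^{\beta_k}) = \omega_i$; such degrees exist by Bezout since $\gcd(\beta_{k1}/d_k, \dots, \beta_{k n_k}/d_k) = 1$ and $d_k \mid \omega_i$. All defining relations of $A_{i-1}$ (and of $A_{i-1}/(F_\mu)$, when that quotient is invoked) are then homogeneous of degree $\omega_i$, so the grading descends. Pairwise coprimality of $d_0, \dots, d_r$ gives $\gcd(d_i, \omega_i) = 1$, which is exactly the numerical hypothesis required by Theorem~\ref{Fifth-Criterion} at this step.

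The base case $i = 1$ is straightforward: $A_1$ is a polynomial ring, and $(II_1)$ follows by applying Theorem~\ref{Fifth-Criterion}(a) to the base $k[T_1]$ (graded so that $T_1^{\beta_1}$ has degree $d_1$) with $F = -\mu T_1^{\beta_1}$, using $\gcd(d_0, d_1) = 1$. For the inductive step, $(I_i)$ follows by applying Theorem~\ref{Fifth-Criterion}(b) to $A_{i-1}$ with $F = -F_{\lambda_i}$; note that $\lambda_i \notin \{\lambda_2, \dots, \lambda_{i-1}\}$ since the $\lambda_j$ are distinct, so $F_{\lambda_i}$ is prime in $A_{i-1}$ by $(II_{i-1})$. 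For $(II_i)$, fix $\mu \in k^* \setminus \{\lambda_2, \dots, \lambda_i\}$. Then $A_{i-1}/(F_\mu)$ is a domain by $(II_{i-1})$, and eliminating $T_0^{\beta_0}$ via $F_\mu$ gives
\[
A_i / (F_\mu) \;\cong\; \bigl( A_{i-1}/(F_\mu) \bigr)[T_i] \big/ \bigl( T_i^{\beta_i} - (\mu - \lambda_i)\, T_1^{\beta_1} \bigr).
\]
Theorem~\ref{Fifth-Criterion}(a) applies: $\mu - \lambda_i \neq 0$, and the image of $T_1^{\beta_1}$ in $A_{i-1}/(F_\mu)$ is a nonzero homogeneous element of degree $\omega_i$ coprime to $d_i$, so the quotient is a domain, i.e., $F_\mu$ is prime in $A_i$.

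The principal obstacle is managing the $\Z$-grading through the induction: the grading on $A_{i-1}$ cannot simply be inherited from step $i - 1$, since the degree $\omega_{i-1}$ used there will generally not be coprime to $d_i$. The remedy is to choose a fresh grading at each step, making every $T_k^{\beta_k}$ have the common degree $\omega_i = d_0 \cdots d_{i-1}$; this exploits the pairwise coprimality of the $d_j$'s decisively. A minor subsidiary point is confirming that $T_1^{\beta_1}$ is nonzero in $A_{i-1}/(F_\mu)$ when invoking Theorem~\ref{Fifth-Criterion}(a) in $(II_i)$; this follows by a routine induction using the injectivity $A_{j-1}/(F_\mu) \hookrightarrow A_j/(F_\mu)$ (from a degree-in-$T_j$ argument) together with the base observation $k[T_1] \hookrightarrow k[T_0, T_1]/(F_\mu)$.
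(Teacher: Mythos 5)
Your proposal is correct and follows essentially the same route as the paper's proof: the same tower $A_1\subset A_2\subset\cdots\subset A_r=B$, the same simultaneous induction on the UFD property and the primality of $T_0^{\beta_0}+\mu T_1^{\beta_1}$, the same freshly chosen $\Z$-grading at each step making every $T_k^{\beta_k}$ homogeneous of the common degree $d_0\cdots d_{i-1}$, and the same appeals to Theorem~\ref{Fifth-Criterion}(a) and (b). Your explicit remark that $T_1^{\beta_1}$ must be checked nonzero in $A_{i-1}/(F_\mu)$ is a point the paper leaves implicit, and your handling of it is fine.
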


\begin{proof} In a slight abuse of notation, we use $T_i$ and $t_{ij}$ to denote their images in $B$. 
We proceed by induction on $r$. Define subrings $B_1\subset B_2\subset\cdots\subset B_r=B$ by:
\[
B_m=k[T_0,\hdots ,T_m]/(T_0^{\beta_0}+\lambda_iT_1^{\beta_1}+T_i^{\beta_i})_{2\le i\le m}
\]
Note that $B_1=k[T_0,T_1]\cong k^{[n_0+n_1]}$, which is a rational UFD. Moreover, it follows easily from  {\it Theorem\,\ref{Fifth-Criterion}(a)} that $T_0^{\beta_0}+\mu T_1^{\beta_1}$ is prime in $B_1$ for each $\mu\in k^*$. This gives a basis for induction. 

Assume that, for some $m\ge 2$, $B_1,\hdots ,B_{m-1}$ are rational UFDs over $k$ and that $T_0^{\beta_0}+\mu T_1^{\beta_1}$ is prime in $B_{m-1}$ for every 
$\mu\in k^*\setminus\{ \lambda_2,\hdots ,\lambda_{m-1}\}$.
Let $F=T_0^{\beta_0}+\lambda_mT_1^{\beta_1}$.  
Given $i$ with $0\le i\le m-1$, suppose that $d_i=d_{i1}\beta_{i1}+\cdots +d_{in_i}\beta_{in_i}$ for $d_{ij}\in\Z$. 
Put an $\N$-grading on $B_{m-1}$ by letting $t_{ij}$ be homogeneous with $\deg(t_{ij}) = d_{ij} d_0 \cdots\hat{d_i}\cdots d_{m-1}$.
Then for each monomial $T_i^{\beta_i}$, $0\le i\le m-1$, we have:
\[
\deg T_i^{\beta_i} = \deg (t_{i1}^{\beta_{i1}}\cdots t_{in_i}^{\beta_{in_i}})=(d_{i1}\beta_{i1}+\cdots +d_{in_i}\beta_{in_i})(d_0\cdots\hat{d_i}\cdots d_{m-1})=d_0\cdots d_{m-1}
\]
Therefore, $F$ is homogeneous and $\deg F=d_0\cdots d_{m-1}$. Since $B_m=B_{m-1}[T_m]/(T_m^{\beta_m}+F)$ and 
$\gcd (\beta_{m1},\hdots ,\beta_{mn_m},\deg F)=1$, {\it Theorem\,\ref{Fifth-Criterion}(b)} implies that $B_m$ is a UFD, and that ${\rm frac}(B_m)\cong {\rm frac}(B_{m-1})^{(n_m-1)}$. Since $B_{m-1}$ is rational over $k$, it follows that $B_m$ is rational over $k$. 

Given $\mu\in k^*\setminus\{ \lambda_2,\hdots ,\lambda_m\}$, let $G=T_0^{\beta_0}+\mu T_1^{\beta_1}$ and 
$\bar{B}_{m-1}=B_{m-1}/GB_{m-1}$. Since $G$ is a homogeneous prime of $B_{m-1}$, it follows that $\bar{B}_{m-1}$ is a $\Z$-graded integral domain. We have
\begin{eqnarray*}
B_m/GB_m &=& B_{m-1}[T_m]/(T_0^{\beta_0}+\mu T_1^{\beta_1} , T_m^{\beta_m} + T_0^{\beta_0} + \lambda_mT_1^{\beta_1}) \\
&=& \bar{B}_{m-1}[T_m]/(T_m^{\beta_m}+(\lambda_m-\mu)T_1^{\beta_1})
\end{eqnarray*}
and {\it Theorem\,\ref{Fifth-Criterion}(a)} implies that this ring is an integral domain. So $G$ is prime in $B_m$.

By induction, it follows that $B_r=B$ is a rational UFD over $k$ and $T_0^{\beta_0}+\mu T_1^{\beta_1}$ is prime in $B$ for each $\mu\in k^*\setminus\{ \lambda_2,\hdots ,\lambda_r\}$.
\end{proof}

\begin{example} {\rm In \cite{Mori.77}, Mori classified affine UFDs of dimension two over an algebraically closed field $k$ which admit a nontrivial $\N$-grading. Each such ring is of the form
\[
k[x,y,z_1,\hdots ,z_N]/(x^a+\mu_iy^b+z_i^{c_i})_{1\le i\le N}
\]
where $N\ge 0$; $a,b,c_1,\hdots ,c_N\ge 2$ are pairwise relatively prime; and $1=\mu_1,\hdots ,\mu_N\in k^*$ are distinct. These rings conform to the data (D.1) $n=N+2$, $r=N+1$, where $T_0=x$, $T_1=y$ and $T_i=z_{i-1}$, $2\le i\le r$; (D.2) $\beta_0=a$, $\beta_1=b$ and $\beta_i=c_{i-1}$, $2\le i\le r$; (D.3) $\lambda_i=\mu_{i-1}$, $2\le i\le r$. }
\end{example}

\begin{remark} {\rm A UFD $B$ of the type presented in {\it Theorem\,\ref{trinomial}} admits a non-degenerate\footnote{Let $(G,+)$ be an abelian group and
$R = \bigoplus_{i \in G} R_i$ a $G$-graded ring. We say that the grading is {\bf non-degenerate} if $\setspec{i \in G}{ R_i \neq 0}$ is a generating set of $G$.} grading by $\Z^{n-r}$. When $k$ is algebraically closed, this means that the variety $X={\rm Spec}(B)$ admits a torus action of complexity one. Surfaces and threefolds of this type were classified by Mori \cite{Mori.77} and Ishida \cite{Ishida.77}, respectively. 
More recently, Hausen, Herppich and S\"uss \cite{Hausen.Herppich.Suss.11} classified all such varieties in terms of Cox rings, under the additional assumption that the characteristic of $k$ is zero.  
Their description matches that given in {\it Theorem\,\ref{trinomial}} in that, when $k$ is algebraically closed of characteristic zero, every $k$-affine rational UFD of dimension $d$ that admits a non-degenerate $\Z^{d-1}$-grading is one of the rings of {\it Theorem\,\ref{trinomial}}. However, our}
description uses simpler data, aligning with Mori's description for surfaces. Moreover, {\it Theorem\,\ref{trinomial}} is valid for any field $k$.  
\end{remark}


\section{A Counterexample}\label{appendix}

The Bourbaki volume \cite{Bourbaki.72} includes the following exercise (p.549, Exercise 15(b), VII, \S\,1). 
\begin{quote} ``Let $A$ be a Krull domain and $a,b$ two [nonzero]\footnote{The hypothesis that $a,b\ne 0$ appears in part (a) of the exercise.} elements of $A$ such that $Aa$ and $Aa+Ab$ are prime and distinct. 
Show that $A[X]/(aX+b)$ is a Krull domain and that [the divisor class group] $C(A[X]/(aX+b))$ is isomorphic to $C(A)$.'' 
\end{quote}
In this section, we construct a counterexample to this assertion. 

Let $k$ be a field of characteristic zero and let $A=k[x,y]\cong k^{[2]}$.
Define a sequence of integers $s(n)$ by $s(1)=2,s(2)=3$ and $s(n)=n\prod_{1\le i\le n-2}s(i)$ for $n\ge 3$. 
Let $A[Z_0,Z_1,Z_2\hdots ]$ be the polynomial ring in a countably infinite number of variables $Z_i$ over $A$, and define:
\[
B=A[Z_0,Z_1,Z_2,\hdots ]/(xZ_{i+1}+y^{s(i+1)-1}Z_i^{s(i+1)}-Z_{i-1})_{i\ge 1} 
\]
\begin{theorem} The following properties hold.
\begin{itemize}
\item [{\bf (a)}] $B$ is a non-noetherian UFD of transcendence degree 4 over $k$, the surjection of $A[Z_0,Z_1,\hdots ]$ onto $B$ is injective on $A$, $x,y\in B$ are non-associated primes of $B$, and $xB+yB$ is a maximal ideal. 
\item [{\bf (b)}] Let $B[T]\cong B^{[1]}$. The ring $B'=B[T]/(xT-y)$ is an integral domain which does not satisfy the ascending chain condition on principal ideals.
\end{itemize}
\end{theorem}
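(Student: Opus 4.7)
The plan is to prove the two parts separately: part~(a) follows the template used for $\Omega$ in Section~\ref{examples}, while part~(b) is a direct iteration producing a strictly ascending chain of principal ideals in $B'$.

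For part~(a), I would set $g_i := xZ_{i+1} + y^{s(i+1)-1}Z_i^{s(i+1)} - Z_{i-1}$ and first show that the truncation $B_n := A[Z_0,\ldots,Z_{n+1}]/(g_1,\ldots,g_n)$ is isomorphic to the polynomial ring $k[x,y,Z_n,Z_{n+1}] \cong k^{[4]}$, by solving each $g_i$ for $Z_{i-1}$ to successively eliminate $Z_{n-1},\ldots,Z_0$. It follows that $B = \varinjlim_n B_n$ is a domain of transcendence degree $4$ over $k$ and that $A \hookrightarrow B$. I would then identify $B/xB$ as the directed union $\bigcup_n k[y,Z_n]$ along the injections $Z_n \mapsto y^{s(n+2)-1}Z_{n+1}^{s(n+2)}$ (and similarly for $B/yB$), hence an integral domain, so $x$ and $y$ are prime; and $B/(xB+yB)=k$ yields maximality of $\mathfrak{m}:=xB+yB$ and non-associatedness of $x,y$. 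Iterating the relations in $B[x^{-1}]$ eliminates $Z_i$ for $i\ge 2$, giving $B[x^{-1}] \cong k[x,x^{-1},y,Z_0,Z_1]$, a UFD. By \emph{Proposition~\ref{Krull-descent}} and \emph{Theorem~\ref{Nagata}}, the UFD property of $B$ then reduces to $B$ being a Krull domain, which in turn reduces to $\bigcap_n x^n B=(0)$.

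\textbf{The main obstacle} is this last intersection identity. My plan is to endow $B$ with the $\Z$-grading $\deg x=0$, $\deg y=1$, $\deg Z_i=-1$ (a direct check shows each $g_i$ is homogeneous of degree $-1$). Since $x\in B_0$, each $x^nB$ is a graded ideal and it suffices to prove $\bigcap_n x^n B_d = 0$ in every graded component $B_d$. Using the rewrite rule $y^{s(i+1)-1}Z_i^{s(i+1)} = Z_{i-1}-xZ_{i+1}$, which strictly decreases the total $Z$-exponent of a monomial (so the rewriting terminates), I would produce an explicit $k$-basis of $B_d$ consisting of ``normal'' monomials and argue that it realises $B_d$ as a free $k[x]$-module, which immediately gives $\bigcap_n x^n B_d = 0$. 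Non-noetherianness of $B$ would then follow from the length-$3$ chain of primes $(0) \subsetneq Z_0 B \subsetneq \mathfrak{p} \subsetneq \mathfrak{m}$, where $\mathfrak{p} := (Z_0,Z_1,\ldots)B$ is prime because $B/\mathfrak{p} \cong k[x,y]$; since $\mathfrak{m}$ is $2$-generated, its height being at least $3$ contradicts the Krull height theorem in the noetherian setting.

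For part~(b), \emph{Theorem~\ref{Samuel-lemma}(a)} applied to the relatively prime primes $x,y$ of the UFD $B$ shows that $(xT-y)$ is prime in $B[T]$, so $B' \cong B[y/x]$ is a domain containing $B$; and $B'/xB' \cong k[T]$ shows $x \notin (B')^*$. The key computation is that substituting $y = xT$ in the defining relation gives, in $B'$,
\[ Z_{i-1} = x\bigl( Z_{i+1} + x^{s(i+1)-2} T^{s(i+1)-1} Z_i^{s(i+1)} \bigr), \]
so $Z_{i-1}/x \in B'$ for every $i \ge 1$ (using $s(i+1) \ge 2$). By induction on $n$ I would then prove $Z_j \in x^n B'$ for every $j \ge 0$ and $n \ge 0$: the inductive step uses $s(j+2) \ge 3$ to verify $x^{s(j+2)-1} T^{s(j+2)-1} Z_{j+1}^{s(j+2)} \in x^{n+1}B'$ whenever $Z_{j+1} \in x^n B'$, since then $(s(j+2)-1) + n\,s(j+2) \ge n+1$ for all $n \ge 0$. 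Consequently $Z_0 \in \bigcap_n x^n B' \setminus \{0\}$, yielding the strictly ascending chain of principal ideals $Z_0 B' \subsetneq (Z_0/x)B' \subsetneq (Z_0/x^2)B' \subsetneq \cdots$ (strict since $x \notin (B')^*$). Because every Krull domain satisfies ACCP, $B'$ cannot be a Krull domain, furnishing the desired counterexample to the Bourbaki assertion.
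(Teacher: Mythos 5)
Your reduction of part~(a) to the single statement $\bigcap_{n}x^nB=(0)$ is legitimate (via \emph{Proposition~\ref{Krull-descent}} and Nagata), and part~(b) is correct and essentially the paper's argument (the paper gets $z_0\in\bigcap_n x^nB'$ from $z_0\in\bigcap_n\mathfrak m^n$ and $y\in xB'$, while you substitute $y=xT$ directly; both work). The problem is the step you yourself flag as the main obstacle. Your proposed proof that each graded piece $B_d$ is a free $k[x]$-module on the ``normal'' monomials fails: termination of the rewriting system $y^{s(i+1)-1}Z_i^{s(i+1)}\to Z_{i-1}-xZ_{i+1}$ does not give confluence, and in fact the normal monomials are $k$-linearly \emph{dependent}. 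Concretely, with $s(2)=3$ and $s(3)=6$, the monomial $y^5Z_1^3Z_2^6$ reduces by the rule for $i=1$ to $y^3Z_0Z_2^6-xy^3Z_2^7$ and by the rule for $i=2$ to $Z_1^4-xZ_1^3Z_3$; all four monomials appearing are irreducible for your rewriting system and lie in $B_{-4}$, so
\[
y^3Z_0Z_2^6-xy^3Z_2^7-Z_1^4+xZ_1^3Z_3=0
\]
is a nontrivial relation among your proposed basis elements. Completing the rewriting system to a confluent one is precisely the hard combinatorial content here, and it is telling that your sketch of part~(a) never uses the actual definition of the sequence $s(n)$ --- yet that sequence is chosen exactly so that the Krull property holds.

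The paper sidesteps this entirely: it localizes at $S=k[y]\setminus\{0\}$ and observes that the $k(y)[x]$-automorphism $Z_i\mapsto yZ_i$ identifies $S^{-1}B$ with the non-noetherian UFD of David \cite{David.73}; then $B=B_x\cap S^{-1}B$ is an intersection of two UFDs inside $\Frac(B)$, hence a Krull domain by \emph{Proposition~\ref{Krull-intersection}}, and Nagata's criterion with the prime $x$ finishes the proof. (The identity $B=B_x\cap S^{-1}B$ is itself checked by comparing coefficients in the filtration $B=\bigcup_n A[z_n,z_{n+1}]$ with each $A[z_n,z_{n+1}]\cong A^{[2]}$.) So to complete your argument you would either need to import David's theorem in this way, or genuinely reprove it --- the elementary normal-form argument you propose is not a viable substitute as written. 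The remaining assertions of part~(a) (injectivity of $A\to B$, $\trdeg_kB=4$, primality and non-associatedness of $x,y$, maximality of $\mathfrak m$, and non-noetherianity via the chain $(0)\subsetneq Z_0B\subsetneq\mathfrak p\subsetneq\mathfrak m$) are handled correctly in your sketch, modulo the standard verification that the defining ideal meets each truncation $A[Z_0,\dots,Z_{n+1}]$ in exactly $(g_1,\dots,g_n)$, which the paper supplies via a transcendence-degree argument.
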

The proof consists of the following series of lemmas.

\begin{lemma}  \label {bvuyr53wnd12ghjwerlio8deijf}
Let $k$ be a field, $R$ a $k$-algebra and $R_1 \subset R_2 \subset R_3 \subset \cdots$ finitely generated subalgebras of $R$ such that $R = \bigcup_{n\ge1} R_n$.
Consider $J_1 \subset J_2 \subset J_3 \subset \cdots$ where, for each $n \ge 1$, $J_n$ is a prime ideal of $R_n$.
Then $J = \bigcup_{n\ge1} J_n$ is a prime ideal of $R$.
Moreover, consider the following conditions:
\begin{itemize}

\item[\rm(i)] there exists a positive integer $d$ such that ${\rm tr.deg}_k(R_n/J_n)=d$ for all $n\ge 1$;

\item[\rm(ii)] for each $n\ge2$, $R_{n}/J_{n}$ is an algebraic extension of the image of $R_{n-1} \to R_n/J_n$;

\item[\rm(ii$'$)] for each $n\ge2$, $R_{n}/J_{n}$ is a birational extension of the image of $R_{n-1} \to R_n/J_n$.

\end{itemize}
Then the following hold.
\begin{enumerata}

\item If {\rm(i)} and {\rm(ii)} hold then $J \cap R_n = J_n$ and $\trdeg_k(R/J) = \trdeg_k(R_n/J_n)$ for all $n\ge1$.

\item If {\rm(i)} and {\rm(ii$'$)} hold then $J \cap R_n = J_n$ and $\Frac(R/J) = \Frac(R_n/J_n)$ for all $n\ge1$.

\end{enumerata}
\end{lemma}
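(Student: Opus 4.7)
The plan is to first establish that $J$ is prime by a standard directed-union argument, then prove the key equality $J_{n+1} \cap R_n = J_n$ by a dimension-theoretic argument, and finally extract parts (a) and (b) as corollaries. To see $J$ is prime, given $fg \in J$ pick $n$ with $fg \in J_n$ and $m$ with $f,g \in R_m$; taking $N = \max(m,n)$, primality of $J_N$ in $R_N$ forces one of $f,g$ into $J_N \subseteq J$. And $1 \notin J$ because $1 \notin J_n$ for any $n$.

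The heart of the proof is showing that, under hypotheses (i) and (ii), $J_{n+1} \cap R_n = J_n$ for every $n \geq 1$. Set $\bar R_n = R_n / J_n$ and $\bar R_{n+1} = R_{n+1}/J_{n+1}$, and let $S_n \subseteq \bar R_{n+1}$ be the image of the composition $R_n \hookrightarrow R_{n+1} \twoheadrightarrow \bar R_{n+1}$, so that $S_n \cong R_n / (J_{n+1} \cap R_n)$ is a quotient of $\bar R_n$. Hypothesis (ii) says $\bar R_{n+1}$ is algebraic over $S_n$, so $\trdeg_k S_n = \trdeg_k \bar R_{n+1} = d$ by (i). Now $\bar R_n$ is a finitely generated $k$-algebra that is a domain of transcendence degree $d$ over $k$; for such a ring, Krull dimension equals transcendence degree, and passing to a quotient by a nonzero prime strictly lowers the dimension. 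Since the surjection $\bar R_n \twoheadrightarrow S_n$ lands in a domain (a subring of $\bar R_{n+1}$) of transcendence degree $d$, its kernel is a prime of $\bar R_n$ that cannot be nonzero, so the surjection is an isomorphism and $J_{n+1} \cap R_n = J_n$. An easy induction on $m-n$ then gives $J_m \cap R_n = J_n$ for all $m \geq n$, and taking the union over $m$ yields $J \cap R_n = J_n$.

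With $J \cap R_n = J_n$ in hand, $\bar R_n$ embeds into $R/J$, and $R/J$ is the directed union of the $\bar R_n$. For part (a), each successive inclusion $\bar R_n \hookrightarrow \bar R_{n+1}$ is algebraic by (ii), so $R/J$ is algebraic over $\bar R_n$ and $\trdeg_k (R/J) = \trdeg_k \bar R_n$. For part (b), note that (ii$'$) is strictly stronger than (ii), so part (a) already delivers $J \cap R_n = J_n$; the birationality of each $\bar R_n \hookrightarrow \bar R_{n+1}$ then gives $\Frac(\bar R_{n+1}) = \Frac(\bar R_n)$, and passing to the union yields $\Frac(R/J) = \Frac(\bar R_n)$. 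The main obstacle is isolated to the dimension-theoretic step: once one knows that a surjection from a finitely generated $k$-algebra domain onto a domain of the same transcendence degree must be an isomorphism, the remainder of the argument is a formal colimit manipulation.
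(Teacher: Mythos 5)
Your proposal is correct and follows essentially the same route as the paper: both reduce to showing $J_{n+1}\cap R_n=J_n$ by comparing the two nested primes $J_n\subset J_{n+1}\cap R_n$ of $R_n$ via the transcendence degrees of their quotients (using that for a finitely generated $k$-algebra domain, a proper quotient by a nonzero prime strictly drops the transcendence degree), and then obtain (a) and (b) by passing to the directed union. The only cosmetic difference is that you phrase the key step as injectivity of the surjection $R_n/J_n\twoheadrightarrow R_n/(J_{n+1}\cap R_n)$, while the paper phrases it as equality of the two primes; these are the same argument.
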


\begin{proof}
It is clear that $J$ is a prime ideal of $R$. Assume that (i) and (ii) hold and let us prove:
\begin{equation} \label {9823ubdco8ew39wej}
\text{for all $m,n$ such that $1 \le m \le n$, we have $J_n \cap R_m = J_m$.}
\end{equation}
It suffices to prove the case where $m = n-1$.
So fix $n\ge2$ and let $\phi : R_{n-1} \to R_n/J_n$ be the composition $R_{n-1} \hookrightarrow R_n \to R_n/J_n$.
Assumptions (i) and (ii) give the first two equalities in:
\begin{equation*} 
\trdeg_k\big(R_{n-1}/J_{n-1}\big)
= \trdeg_k\big(R_n/J_n\big) = \trdeg_k\big( \phi(R_{n-1}) \big)
= \trdeg_k\big( R_{n-1}/(J_n\cap R_{n-1}) \big) .
\end{equation*}
We have $J_{n-1} \subset J_n \cap R_{n-1}$, where both $J_{n-1}$ and $J_n \cap R_{n-1}$ are prime ideals of $R_{n-1}$;
so the equality
$\trdeg_k\big(R_{n-1}/J_{n-1}\big) = \trdeg_k\big( R_{n-1}/(J_n\cap R_{n-1}) \big)$
implies that $J_{n-1} = J_n \cap R_{n-1}$, as desired. This proves \eqref{9823ubdco8ew39wej}.
It follows that
\begin{equation} \label {980UyyYeE92873teidbxcowa9es}
\text{$J \cap R_n = J_n$ for all $n\ge1$.}
\end{equation}
Indeed, consider $f \in J \cap R_n$. Then there exists $N\ge n$ such that $f \in J_N$,
so $f \in J_N \cap R_n = J_n$ by  \eqref{9823ubdco8ew39wej}, so \eqref{980UyyYeE92873teidbxcowa9es} is proved.

Let $S = R/J$, let $\pi : R \to S$ be the canonical surjection and let $S_n = \pi(R_n)$ for each $n\ge1$.
Then \eqref{980UyyYeE92873teidbxcowa9es} implies that $S_n \isom R_n/J_n$ for all $n\ge1$.
Moreover, assumption (ii) implies that $S_n$ is an algebraic extension of $S_{n-1}$ for every $n\ge2$;
since $S = \bigcup_{n\ge1} S_n$, it follows that $S$ is an algebraic extension of $S_n$  (so $\trdeg_k(R/J) = \trdeg_k(R_n/J_n)$) for all $n\ge1$.

If (i) and (ii$'$) hold then all of the above is true and $S_n$ is a birational extension of $S_{n-1}$ for every $n\ge2$;
so $S$ is a birational extension of $S_n$ (so $\Frac(R/J) = \Frac(R_n/J_n)$) for all $n\ge1$.
\end{proof}

\begin{lemma}  \label {0d2o938ue092uf0wi}
The canonical map $A \to B$ is injective, $B$ is a domain and $\Frac B = (\Frac A)^{(2)} = k^{(4)}$.
\end{lemma}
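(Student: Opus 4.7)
The plan is to apply Lemma~\ref{bvuyr53wnd12ghjwerlio8deijf}(b) to the natural filtration of $B$ by finitely generated subalgebras. Write $R = A[Z_0, Z_1, Z_2, \ldots]$, let $J \subset R$ be the ideal generated by all the defining relations, and set $B = R/J$. For each $n \ge 1$, I would take $R_n = A[Z_0, \ldots, Z_n]$ and let $J_n$ be the ideal generated by those relations $xZ_{i+1}+y^{s(i+1)-1}Z_i^{s(i+1)}-Z_{i-1}$ with $1 \le i \le n-1$ (so $J_1 = 0$). Then $R = \bigcup_{n\ge 1} R_n$ and, since each generator of $J$ lies in some $J_n$, also $J = \bigcup_{n\ge 1} J_n$.

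The heart of the argument, which I expect to be the main obstacle, is to identify each $R_n/J_n$ with a polynomial ring $A^{[2]}$. To do this I would introduce formal indeterminates $W_1, \ldots, W_{n-1}$ over $k$ and define a $k$-algebra homomorphism
\[
\Phi_n \colon k[x, y, Z_{n-1}, Z_n, W_1, \ldots, W_{n-1}] \longrightarrow R_n
\]
acting as the identity on $x, y, Z_{n-1}, Z_n$ and sending $W_i \mapsto xZ_{i+1}+y^{s(i+1)-1}Z_i^{s(i+1)}-Z_{i-1}$. A descending induction on $i$, in which each $\Phi_n(W_i)$ is solved for $Z_{i-1}$ as a polynomial in $x, y, Z_i, Z_{i+1}, \Phi_n(W_i)$, should show that every $Z_j$ with $0 \le j \le n-2$ lies in the image, so $\Phi_n$ is surjective. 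As source and target are both polynomial rings over $k$ in $n+3$ variables, a surjection between noetherian integral domains of equal Krull dimension is an isomorphism. Under $\Phi_n$, the ideal $J_n$ corresponds to $(W_1, \ldots, W_{n-1})$, giving $R_n/J_n \cong A[Z_{n-1}, Z_n] \cong A^{[2]}$. In particular $J_n$ is prime and $\trdeg_k(R_n/J_n) = 4$ for all $n \ge 1$, which is condition~(i) of the lemma with $d=4$.

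For condition~(ii$'$), I would observe that under the identifications $R_{n-1}/J_{n-1} \cong A[Z_{n-2}, Z_{n-1}]$ and $R_n/J_n \cong A[Z_{n-1}, Z_n]$, the induced map sends $Z_{n-2}$ to $xZ_n + y^{s(n)-1}Z_{n-1}^{s(n)}$, so its image is the $A$-subalgebra generated by $Z_{n-1}$ and $xZ_n + y^{s(n)-1}Z_{n-1}^{s(n)}$. Since $x$ is a unit in the fraction field, $Z_n$ also lies in the fraction field of this subalgebra, so the extension is birational. With (i) and (ii$'$) in hand, Lemma~\ref{bvuyr53wnd12ghjwerlio8deijf}(b) yields that $J$ is prime, that $J \cap R_n = J_n$ for all $n$, and that $\Frac(B) = \Frac(R_1/J_1) = \Frac(A[Z_0, Z_1]) = (\Frac A)^{(2)} = k^{(4)}$. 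Taking $n=1$ gives $J \cap A \subset J_1 = 0$, so $A \hookrightarrow B$; and primality of $J$ gives that $B$ is a domain, completing the proof.
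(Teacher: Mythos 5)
Your proposal is correct and follows essentially the same route as the paper: both apply Lemma~\ref{bvuyr53wnd12ghjwerlio8deijf} to the filtration by the subalgebras $A[Z_0,\dots,Z_m]$ (the paper indexes so that $R_n=A[Z_0,\dots,Z_{n+1}]$ carries the first $n$ relations, but this is immaterial), verify condition (i) by identifying $R_n/J_n\cong A^{[2]}$, and verify (ii$'$) by solving the top relation for the highest variable over the fraction field. The only cosmetic difference is in how $R_n/J_n\cong A^{[2]}$ is obtained: the paper composes explicit triangular automorphisms $Z_{i-1}\mapsto Z_{i-1}+xZ_{i+1}+y^{s(i+1)-1}Z_i^{s(i+1)}$ carrying $J_n$ onto $(Z_0,\dots,Z_{n-1})$, whereas you build a surjection from a polynomial ring in the relations and the top two $Z$'s and invoke a dimension count; both are valid.
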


\begin{proof}
We use {\it Lemma \ref{bvuyr53wnd12ghjwerlio8deijf}} with $R = A[Z_0, Z_1,\dots]$ and, for each $n\ge1$,
$$
\text{$R_n = A[Z_0,\dots,Z_{n+1}]$ and $J_n = (xZ_{i+1}+y^{s(i+1)-1}Z_i^{s(i+1)}-Z_{i-1})_{i = 1}^n$.} 
$$
Let $J = \bigcup_{n\ge1} J_n = (xZ_{i+1}+y^{s(i+1)-1}Z_i^{s(i+1)}-Z_{i-1})_{i\ge 1}$, then $R/J = B$.
Since $J \subset (Z_0,Z_1,\dots)$ and  $(Z_0,Z_1,\dots) \cap A = (0)$, we have $J\cap A = (0)$, so $A\to B$ is injective. 
By the same argument, $A \to R_n/J_n$ is injective for each $n\ge1$.

Fix $n\ge1$.
For each $i \in \{1,\dots, n\}$, let $\phi_i$ be the $A$-automorphism of $R_n$ defined by $\phi_i(Z_{i-1}) = Z_{i-1} + xZ_{i+1}+y^{s(i+1)-1}Z_i^{s(i+1)}$
and $\phi_i(Z_j)=Z_j$ for all $j \in \{0,\dots,n+1\} \setminus \{i-1\}$.
Then $\phi_{n} \circ \cdots \circ \phi_1$ maps $J_n$ onto $(Z_0,\dots,Z_{n-1})$, so $R_n/J_n \isom A^{[2]}$.
So condition (i) of  {\it Lemma \ref{bvuyr53wnd12ghjwerlio8deijf}} is satisfied.
Write $R_n/J_n = A[z_0, \dots, z_{n+1}]$ where $z_i$ is the image of $Z_i$ by the canonical surjection $R_n\to R_n/J_n$.
Since  $A\to R_n/J_n$ is injective, we have $x \neq 0$ in $R_n/J_n$, so $xz_{n+1}+y^{s(n+1)-1}z_n^{s(n+1)}-z_{n-1}=0$ implies $z_{n+1} \in \Frac( A[z_{0},\dots,z_n])$,
showing that $R_n/J_n$ is a birational extension of the image of $R_{n-1} \to R_n/J_n$, i.e.,
condition (ii$'$) of  {\it Lemma \ref{bvuyr53wnd12ghjwerlio8deijf}} is satisfied.
By that lemma, $B$ is a domain and $\Frac B = \Frac( R_n/J_n ) = (\Frac A)^{(2)} = k^{(4)}$.
\end{proof}

\begin{lemma}  \label {pc9v987237edgw9s0AAA1}
$B/xB$ is a domain of transcendence degree $2$ over $k$.
\end{lemma}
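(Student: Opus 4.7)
The plan is to realize $B/xB$ as a quotient of a polynomial ring in countably many variables and apply Lemma \ref{bvuyr53wnd12ghjwerlio8deijf} directly. Reducing modulo $x$ kills the leading term $xZ_{i+1}$ of each defining relation of $B$, so
\[
B/xB \;\isom\; k[y][Z_0, Z_1, Z_2, \dots] \Big/ \bigl( y^{s(i+1)-1} Z_i^{s(i+1)} - Z_{i-1} \bigr)_{i\ge 1} .
\]
I would set $R = k[y][Z_0, Z_1, \dots]$, $R_n = k[y][Z_0, \dots, Z_n]$, and $J_n = (y^{s(i+1)-1} Z_i^{s(i+1)} - Z_{i-1})_{i=1}^n \subset R_n$. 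Then $R = \bigcup_n R_n$, the ideals $J_n$ form an increasing chain, and $J := \bigcup_n J_n$ satisfies $R/J \isom B/xB$.

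The first step is to show that each $R_n/J_n$ is a polynomial ring in two variables over $k$, and in particular a domain of transcendence degree $2$ over $k$. In analogy with the proof of Lemma \ref{0d2o938ue092uf0wi}, I would introduce the triangular $k[y]$-automorphisms $\psi_i$ of $R_n$ (for $i = 1, \dots, n$) defined by $\psi_i(Z_{i-1}) = Z_{i-1} + y^{s(i+1)-1} Z_i^{s(i+1)}$ and $\psi_i(Z_j) = Z_j$ for $j \ne i-1$. A direct check shows that the composition $\psi_n \circ \cdots \circ \psi_1$ sends the defining generators of $J_n$ to $-Z_0, -Z_1, \dots, -Z_{n-1}$, hence carries $J_n$ onto $(Z_0, \dots, Z_{n-1})$; consequently
\[
R_n/J_n \;\isom\; R_n/(Z_0, \dots, Z_{n-1}) \;\isom\; k[y][Z_n] \;\isom\; k^{[2]}.
\]
This gives condition (i) of Lemma \ref{bvuyr53wnd12ghjwerlio8deijf} with $d = 2$, and also that each $J_n$ is prime.

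To verify condition (ii), write $z_i$ for the image of $Z_i$ in $R_n/J_n$. The defining relations give $z_{i-1} = y^{s(i+1)-1} z_i^{s(i+1)}$ for $i = 1, \dots, n$, so by descending induction each $z_i$ with $i \le n-1$ is a polynomial in $y$ and $z_{n-1}$; hence the image of $R_{n-1} \to R_n/J_n$ is the subring $k[y, z_{n-1}]$. The element $z_n$ satisfies the nonzero polynomial identity $y^{s(n+1)-1} T^{s(n+1)} - z_{n-1} = 0$ with coefficients in $k[y, z_{n-1}]$, so $R_n/J_n = k[y, z_{n-1}][z_n]$ is algebraic over the image of $R_{n-1}$. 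Lemma \ref{bvuyr53wnd12ghjwerlio8deijf}(a) then yields that $J$ is prime and $\trdeg_k(R/J) = 2$, giving the desired conclusion. The only non-routine point is the explicit identification $R_n/J_n \isom k[y, Z_n]$ via the triangular automorphisms; everything else is a direct application of the already-established lemma.
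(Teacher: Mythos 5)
Your proof is correct, and it deploys Lemma~\ref{bvuyr53wnd12ghjwerlio8deijf} differently from the paper in a way worth noting. The paper keeps the filtration $R_n = A[Z_0,\dots,Z_{n+1}]$ inherited from the definition of $B$, so that $R_n/J_n \isom k^{[3]}$ with the top variable $Z_{n+1}$ remaining free; condition (ii) of the lemma then \emph{fails} (the image of $R_{n-1}$ in $R_n/J_n$ is $k[y,z_n]$, over which $z_{n+1}$ is transcendental), so the paper can only extract the unconditional conclusion that $J$ is prime, and must compute $\trdeg_k(B/xB)=2$ by hand --- this is where the separate injectivity statement \eqref{c093y5iwer2i3wqaszyq...}, i.e.\ \eqref{c093y5iwer2i3vvlf5a4wu}, and its height/retraction argument come in. You instead kill $x$ first and re-index the filtration as $R_n = k[y][Z_0,\dots,Z_n]$, stopping at the last variable actually constrained by a relation; then $R_n/J_n \isom k[y,Z_n] \isom k^{[2]}$ via the triangular automorphisms (your computation that $\psi_n\circ\cdots\circ\psi_1$ sends $g_i = y^{s(i+1)-1}Z_i^{s(i+1)} - Z_{i-1}$ to $-Z_{i-1}$ checks out, since $\psi_1,\dots,\psi_{i-1}$ fix $g_i$ and $\psi_{i+1},\dots,\psi_n$ fix $Z_{i-1}$), condition (i) holds with $d=2$, and condition (ii) holds because the $n$-th relation makes $z_n$ a root of $y^{s(n+1)-1}T^{s(n+1)} - z_{n-1}$ over the image $k[y,z_{n-1}]$ of $R_{n-1}$. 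This lets you read off both the primality of $J$ and the transcendence degree directly from part (a) of the lemma, avoiding the paper's separate injectivity argument; the trade-off is only that one must observe the (routine) identification $B/xB \isom R/J$ after the change of base ring, and your route does not by itself reproduce the explicit statement that $k[y,z_0]$ injects into $B/xB$, which the paper does not need elsewhere.
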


\begin{proof}
We use {\it Lemma \ref{bvuyr53wnd12ghjwerlio8deijf}} with $R = A[Z_0, Z_1,\dots]$ and, for each $n\ge1$,
$$
\text{$R_n = A[Z_0,\dots,Z_{n+1}]$ and $J_n = (x) + (xZ_{i+1}+y^{s(i+1)-1}Z_i^{s(i+1)}-Z_{i-1})_{i = 1}^n$.} 
$$
Let $J = \bigcup_{i\ge1}J_n$, then $B/xB = R/J$. Fix $n\ge1$. Then  $J_n = (x) + (y^{s(i+1)-1}Z_i^{s(i+1)}-Z_{i-1})_{i = 1}^n$. 
If we write $V_{i-1} = y^{s(i+1)-1}Z_i^{s(i+1)}-Z_{i-1}$ for $i=1,\dots,n$, then $R_n=k[x,y,V_0,\dots,V_{n-1},Z_n,Z_{n+1}]$
and $J_n = (x,V_0,\dots,V_{n-1})$, so $R_n/J_n = k^{[3]}$ and $J_n$ is a prime ideal of $R_n$ of height $n+1$.
It follows from {\it Lemma \ref{bvuyr53wnd12ghjwerlio8deijf}} that $J$ is a prime ideal of $R$ and hence that $B/xB$ is a domain.
Let us prove:
\begin{equation}  \label {c093y5iwer2i3vvlf5a4wu}
\text{the composition $k[y,Z_0] \hookrightarrow R \to R/J = B/xB$ is injective.}
\end{equation}
To see this, it's enough to show that $k[y,Z_0] \cap J = (0)$;
so it's enough to show that $k[y,Z_0] \cap J_n = (0)$ for all $n\ge1$.
Let $n\ge1$.
If $y \in J_n$ then we see that $Z_0, \dots, Z_{n-1} \in J_n$ (and $x,y \in J_n$) so $\haut J_n > n+1$, a contradiction.
So $y \notin J_n$.
If $Z_0 \in J_n$ then (using that $J_n$ is prime and that $y \notin J_n$) we see that $Z_0, \dots, Z_{n} \in J_n$ (and $x \in J_n$), so $\haut J_n > n+1$, a contradiction.
So $Z_0 \notin J_n$.
Consider the $k$-homomorphism $\pi : R_n \to k[y,Z_0]$ defined by $\pi(x)=0 = \pi(Z_1) = \cdots \pi(Z_{n+1})$, $\pi(y)=y$ and $\pi(Z_0)=Z_0$.
If $f$ is a nonzero element of $k[y,Z_0] \cap J_n$, write $f = Z_0^m g$ where $m \in \N$, $g \in k[y,Z_0]$ and $Z_0 \nmid g$;
since $J_n$ is prime and $Z_0 \notin J_n$, we have $g \in J_n$;
write $g = x g_0 + \sum_{i=1}^n (y^{s(i+1)-1}Z_i^{s(i+1)}-Z_{i-1})g_i$ ($g_0, \dots, g_n \in R_n$)
and note that $g = \pi(g) = - Z_0 \pi(g_1)\in Z_0 k[y,Z_0]$, a contradiction.
This shows that $f$ does not exist, i.e., $k[y,Z_0] \cap J_n = (0)$.
So \eqref{c093y5iwer2i3vvlf5a4wu} is true.

Write $B/xB = k[y,z_0, z_1, \dots]$ where $z_i$ is the canonical image of $Z_i$ in $R/J=B/xB$.
Then $y^{s(i+1)-1}z_i^{s(i+1)}-z_{i-1}=0$ in $B/xB$ for all $i\ge1$.
Since $y \neq 0$ in $B/xB$, it follows that $z_i$ is algebraic over $k(y,z_0,\dots,z_{i-1})$ for all $i\ge1$.
So $\Frac(B/xB)$ is an algebraic extension of $k(y,z_0)$.  We have $k(y,z_0) = k^{(2)}$ by \eqref{c093y5iwer2i3vvlf5a4wu},
so $\trdeg_k(B/xB) = 2$.
\end{proof}

\begin{lemma}  \label {pc9v987237edgw9s0AAA2}
$B/yB$ is a domain and $\Frac(B/yB) = k^{(3)}$.
\end{lemma}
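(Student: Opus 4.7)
The plan is to mimic the proof of \emph{Lemma~\ref{pc9v987237edgw9s0AAA1}}, applying \emph{Lemma~\ref{bvuyr53wnd12ghjwerlio8deijf}} to the filtration of the ideal obtained by adjoining $y$ to the relations. Specifically, I will take $R = A[Z_0, Z_1, \dots]$ and, for each $n\ge1$,
$$
R_n = A[Z_0,\dots,Z_{n+1}], \qquad J_n = (y) + \bigl(xZ_{i+1}+y^{s(i+1)-1}Z_i^{s(i+1)}-Z_{i-1}\bigr)_{i=1}^n ,
$$
so that $J := \bigcup_{n\ge1} J_n$ satisfies $R/J = B/yB$.

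The first step is to identify $R_n/J_n$. Reducing modulo $y$, the defining generators of $J_n$ collapse to $xZ_{i+1} - Z_{i-1}$ for $1\le i\le n$. These relations allow us to eliminate $Z_0,\dots,Z_{n-1}$ in favour of $x$, $Z_n$ and $Z_{n+1}$: a short induction yields $Z_{n-2j} \equiv x^{j}Z_n$ and $Z_{n-2j-1}\equiv x^{j+1}Z_{n+1}$ modulo $J_n$ for the appropriate values of $j$. Consequently
$$
R_n/J_n \;\cong\; k[x, Z_n, Z_{n+1}]\;\cong\; k^{[3]} ,
$$
so $J_n$ is prime and $\trdeg_k(R_n/J_n)=3$, verifying hypothesis (i) of \emph{Lemma~\ref{bvuyr53wnd12ghjwerlio8deijf}} with $d=3$.

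For hypothesis (ii$'$), I note that the image of $R_{n-1}$ in $R_n/J_n$ is generated over $k$ by $x$, $Z_n$ and the image of $Z_{n-1}$, which equals $xZ_{n+1}$; that is, the image equals $k[x,Z_n,xZ_{n+1}]$. Its fraction field contains $x$ and $xZ_{n+1}$, hence $Z_{n+1}$, so it coincides with $k(x,Z_n,Z_{n+1}) = \Frac(R_n/J_n)$. Thus $R_n/J_n$ is a birational extension of the image of $R_{n-1}\to R_n/J_n$, as required.

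With (i) and (ii$'$) in hand, \emph{Lemma~\ref{bvuyr53wnd12ghjwerlio8deijf}(b)} immediately gives that $J$ is prime in $R$ and that $\Frac(R/J) = \Frac(R_n/J_n) = k^{(3)}$, proving that $B/yB$ is a domain with $\Frac(B/yB) = k^{(3)}$. The only nontrivial step is the explicit elimination giving $R_n/J_n \cong k^{[3]}$, which is really just bookkeeping with the parity of $n-i$; once it is done, the birationality check and the application of the lemma are immediate.
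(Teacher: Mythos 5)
Your proposal is correct and follows essentially the same route as the paper: both reduce the generators modulo $y$ to $xZ_{i+1}-Z_{i-1}$, identify $R_n/J_n\cong k[x,Z_n,Z_{n+1}]\cong k^{[3]}$ (the paper via the triangular change of variables $V_{i-1}=xZ_{i+1}-Z_{i-1}$, you via explicit elimination), verify hypotheses (i) and (ii$'$), and invoke \emph{Lemma~\ref{bvuyr53wnd12ghjwerlio8deijf}(b)}. Your explicit description of the image of $R_{n-1}$ as $k[x,Z_n,xZ_{n+1}]$ even spares you the paper's separate height argument showing $x\neq 0$ in $R_n/J_n$.
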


\begin{proof}
We use {\it Lemma \ref{bvuyr53wnd12ghjwerlio8deijf}} with $R = A[Z_0, Z_1,\dots]$ and, for each $n\ge1$,
$$
\text{$R_n = A[Z_0,\dots,Z_{n+1}]$ and $J_n = (y) + (xZ_{i+1}+y^{s(i+1)-1}Z_i^{s(i+1)}-Z_{i-1})_{i = 1}^n$.} 
$$
Let $J = \bigcup_{i\ge1}J_n$, then $B/yB = R/J$. Fix $n\ge1$. Then  $J_n = (y) +  (xZ_{i+1}-Z_{i-1})_{i = 1}^n$.
If we define $V_{i-1} = xZ_{i+1}-Z_{i-1}$ for $i=1,\dots,n$, then $R_n = k[x,y,V_0,\dots,V_{n-1},Z_n,Z_{n+1}]$
and $J_n=(y,V_0,\dots,V_{n-1})$, so $R_n/J_n = k^{[3]}$ and $J_n$ is a prime ideal of $R_n$ of height $n+1$.
By {\it Lemma \ref{bvuyr53wnd12ghjwerlio8deijf}}, $J$ is a prime ideal of $R$, so $B/yB$ is a domain.
Note that $x \neq 0$ in $R_n/J_n$ (otherwise we would have $x \in J_n$, so $x,y,Z_0,\dots,Z_{n-1} \in J_n$, so $\haut J_n > n+1$, a contradiction).
Write $R_n/J_n = k[x,z_0,\dots,z_{n+1}]$ where $z_i$ is the canonical image of $Z_i$ in $R_n/J_n$.
Since $xz_{n+1}-z_{n-1} = 0$ and $x \neq 0$ in $R_n/J_n$, we have $z_{n+1} \in k(x,z_0,\dots,z_n)$,
so condition (ii$'$) of {\it Lemma \ref{bvuyr53wnd12ghjwerlio8deijf}} is satisfied.
It follows that $\Frac(B/yB) = \Frac(R_n/J_n) = k^{(3)}$.
\end{proof}

\begin{lemma} \label {dA09fij9283gwidw201nbw6}
$B$ is a UFD, $x,y$ are prime elements of $B$ such that $x \nmid y$, and $\mathfrak{m}:=xB+yB$ is a maximal ideal of $B$. 
\end{lemma}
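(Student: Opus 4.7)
The first three claims are straightforward consequences of the preceding lemmas. By Lemma~\ref{0d2o938ue092uf0wi} the elements $x$ and $y$ are nonzero in $B$, and by Lemmas~\ref{pc9v987237edgw9s0AAA1} and~\ref{pc9v987237edgw9s0AAA2} the quotients $B/xB$ and $B/yB$ are integral domains, so $x,y$ are prime. The injection $k[y,Z_0]\hookrightarrow B/xB$ of~\eqref{c093y5iwer2i3vvlf5a4wu} keeps $y$ nonzero in $B/xB$, giving $x\nmid y$. For the maximality of $\mathfrak{m}=xB+yB$: reducing $B$ first modulo $x$ gives the relations $y^{s(i+1)-1}\bar z_i^{s(i+1)}=\bar z_{i-1}$, and further setting $y=0$ inductively forces $\bar z_{i-1}=0$ for all $i\ge 1$, so $B/\mathfrak{m}=k$.

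For the UFD conclusion, my strategy is to invoke Proposition~\ref{Krull-descent} with the single prime $p_1 = x$. Once one knows that $B[x^{-1}]$ is Krull and that $B_{(x)}$ is a DVR, this shows $B = B[x^{-1}] \cap B_{(x)}$ is a Krull domain, and then Proposition~\ref{Krull-localization} gives $\Cl(B)\cong \Cl(B[x^{-1}])=0$, whence $B$ is a UFD by Proposition~\ref{ckjpq9wxdiqpows}. The first ingredient is immediate: inverting $x$ and using the defining relation $xz_{i+1}+y^{s(i+1)-1}z_i^{s(i+1)}=z_{i-1}$ to solve recursively for $z_{i+1}$ in terms of $z_{i-1},z_i$ identifies $B[x^{-1}]$ with the Laurent polynomial ring $k[x,x^{-1},y,z_0,z_1]$, hence a UFD. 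Thus everything reduces to showing $\bigcap_{m\ge 0}x^m B=0$, equivalently that $B_{(x)}$ is a DVR.

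To establish that $\bigcap x^m B=0$, I would use the filtration $B=\bigcup_{n\ge 0}D_n$, where $D_n$ is the $A$-subalgebra of $B$ generated by $z_n,z_{n+1}$. The defining relations let one eliminate $z_0,\dots,z_{n-1}$ from $D_n$, and an argument parallel to Lemma~\ref{0d2o938ue092uf0wi} identifies $D_n\cong A[Z_n,Z_{n+1}]=k^{[4]}$, a noetherian UFD. The inclusion $D_n\subset D_{n+1}$ is the Samuel-type extension $D_{n+1}=D_n[T]/(xT-b_n)$ with $b_n=z_n-y^{s(n+2)-1}z_{n+1}^{s(n+2)}\in D_n$, and from the explicit subring presentation $D_n=k[x,y,z_{n+1},xz_{n+2}]\subset k[x,y,z_{n+1},z_{n+2}]=D_{n+1}$ (using $b_n=xz_{n+2}$ in $D_{n+1}$) a direct monomial computation yields the contraction formula
\[
D_n\cap x^k D_{n+1}=(x,b_n)^k D_n \qquad (k\ge 0),
\]
so $v_{n+1}(f):=\max\{k:f\in x^k D_{n+1}\}$ equals the $(x,b_n)$-adic order of $f$ in $D_n$, which is finite for any nonzero $f$ by Krull's intersection theorem in the noetherian $D_n$.

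The main obstacle is to bound $v_n(f)$ uniformly in $n$ for a fixed nonzero $f\in B$. My plan is to track the strict transforms: writing $f=x^{v_n(f)}\tilde f_n$ in $D_n$ with $x\nmid\tilde f_n$, the strict inequality $v_{n+1}(f)>v_n(f)$ holds precisely when the image of $\tilde f_n$ in $D_n/xD_n=k[y,z_n,z_{n+1}]$ is divisible by $\bar b_n=z_n-y^{s(n+2)-1}z_{n+1}^{s(n+2)}$. Since $\bar b_n$ is linear in $z_n$ with nonzero content, each strict increase consumes genuine complexity of the strict transform relative to the original $f\in D_N$; a careful bookkeeping of the transforms along the tower $\{D_n\}$ should show this can occur only finitely often, so that $v_n(f)$ stabilizes. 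That last stabilization step is the heart of the proof and the one I would expect to be the most delicate. With $\bigcap x^m B=0$ established, the Krull/class-group argument above wraps everything up to conclude that $B$ is a UFD.
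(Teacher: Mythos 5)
The routine parts of your argument (primality of $x$ and $y$ from Lemmas~\ref{pc9v987237edgw9s0AAA1} and \ref{pc9v987237edgw9s0AAA2}, the fact that $x \nmid y$, and $B/\mathfrak{m}=k$) are fine and match the paper. Your reduction of the UFD claim is also sound in outline: $B_x=A_x[z_0,z_1]\cong A_x^{[2]}$ is a UFD, so by {\it Proposition~\ref{Krull-descent}} and Nagata's criterion everything comes down to showing $\bigcap_{m\ge0}x^mB=(0)$. The contraction formula $D_n\cap x^kD_{n+1}=(x,b_n)^kD_n$ is a correct and genuinely useful observation. But the proof is not complete: the uniform bound $\sup_n v_n(f)<\infty$ for a fixed nonzero $f$ is exactly the hard content of the whole construction, and you have only described a plan (``a careful bookkeeping \dots should show this can occur only finitely often''), which you yourself flag as the delicate heart. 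Note that $v_{n+1}(f)=\operatorname{ord}_{(x,b_n)}(f)$ is computed in coordinates of $D_n$ in which the degree of $f$ grows without bound as $n$ increases (each passage $D_n\to D_{n+1}$ substitutes $z_n = xz_{n+2}+y^{s(n+2)-1}z_{n+1}^{s(n+2)}$, multiplying degrees by roughly $s(n+2)$), so there is no cheap a priori bound on the order; this is precisely where the rapid growth of the sequence $s(n)$ is designed to enter, and a ``bookkeeping of strict transforms'' that never invokes it should make you suspicious. As it stands, the crucial step is asserted, not proved.

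For contrast, the paper sidesteps this analysis entirely: it localizes at $S=k[y]\setminus\{0\}$, uses the automorphism $Z_i\mapsto yZ_i$ to identify $B_L=S^{-1}B$ with the ring of David \cite{David.73}, which is known to be a UFD, proves $B=B_x\cap B_L$ via the filtration $B=\bigcup_n A[z_n,z_{n+1}]$, and concludes that $B$ is Krull as a finite intersection of Krull domains ({\it Proposition~\ref{Krull-intersection}}), whence a UFD by Nagata. So the separatedness $\bigcap_m x^mB=(0)$ that you are trying to prove directly is obtained there as a consequence of citing David's theorem. If you want a self-contained proof along your lines, you must either carry out the stabilization estimate (essentially reproving David's result, using the growth of $s(n)$) or replace that step by the intersection argument $B=B_x\cap B_L$ together with the citation.
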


\begin{proof}
Define $L=k(y)$, $S = k[y]\setminus\{0\}$,  $A_L=S^{-1}A=L[x]$, and $B_L = S^{-1}B$. Then
$$
B_L= L[x][Z_0,Z_1,Z_2,\hdots ]/(xZ_{i+1}+y^{s(i+1)-1}Z_i^{s(i+1)}-Z_{i-1})_{i\ge 1} .
$$
According to \cite{David.73} the ring
\[
R:=L[x][Z_0,Z_1,Z_2,\hdots ]/(xZ_{i+1}+Z_i^{s(i+1)}-Z_{i-1})_{i\ge 1}
\]
is a UFD. Let $\phi$ be the $L[x]$-automorphism of $L[x][Z_0,Z_1,Z_2,\hdots ]$ defined by $\phi(Z_i) = yZ_i$ for all $i\ge0$.
Then $\phi$ maps the ideal $(xZ_{i+1}+Z_i^{s(i+1)}-Z_{i-1})_{i\ge 1}$ of $L[x][Z_0,Z_1,Z_2,\hdots]$ onto the ideal $(xZ_{i+1}+y^{s(i+1)-1}Z_i^{s(i+1)}-Z_{i-1})_{i\ge 1}$ 
of $L[x][Z_0,Z_1,Z_2,\hdots]$.  So $B_L\cong_LR$ and $B_L$ is a UFD.

Write $B = k[x,y,z_0,z_1,\dots]$ where $z_i$ is the canonical image of $Z_i$ in $B$.
Then
\[
xz_{i+1}+y^{s(i+1)-1}z_i^{s(i+1)}-z_{i-1} = 0
\]
in $B$ for all $i\ge1$;
since $x \neq 0$ in $B$ (because $A \to B$ is injective), it follows that $B_x = A_x[z_0,z_1]$.
Since $\Frac B = k^{(4)}$ by {\it Lemma \ref{0d2o938ue092uf0wi}}, $z_0,z_1$ must be algebraically independent over $A$. So
$$
\text{$B_x = A_x^{[2]}$ is a UFD.}
$$
Next, we claim that $B = B_x \cap B_L$.
Indeed, define $B_n=A[z_n,z_{n+1}]$ for all $n\ge0$.
Since 
\[
xz_{i+1}+y^{s(i+1)-1}z_i^{s(i+1)}-z_{i-1} = 0
\]
for all $i\ge1$, we have $z_n \in B_{n+1}$ and hence $B_n\subset B_{n+1}$ for all $n\ge0$.
This gives the filtration $B=\bigcup_{n\ge 0}B_n$.
Since $z_0,z_1$ are algebraically independent over $A$, and since $z_0,z_1 \in B_n=A[z_n,z_{n+1}]$ for all $n\ge0$,
$z_n,z_{n+1}$ are algebraically independent over $A$ and hence $B_n = A^{[2]}$ for all $n\ge 0$.
So 
\[
B_x =\bigcup_{n\ge 0} A_x[z_n,z_{n+1}] \quad {\rm and}\quad B_L =\bigcup_{n\ge 0} A_L[z_n,z_{n+1}]
\]
where $A_x[z_n,z_{n+1}] = A_x^{[2]}$ and $A_L[z_n,z_{n+1}] = A_L^{[2]}$ for all $n\ge 0$.
Given $f\in B_x\cap B_L$, choose $n\ge 0$ so that $f\in A_x[z_n,z_{n+1}] \cap A_L[z_n,z_{n+1}]$ and write:
\[
\textstyle f=\sum_{i,j}\alpha_{ij}z_n^iz_{n+1}^j = \sum_{i,j}\beta_{ij}z_n^iz_{n+1}^j \quad (\alpha_{ij}\in A_x\, ,\, \beta_{ij}\in A_L) .
\]
By uniqueness of coefficients in $\Frac(A)[z_n,z_{n+1}] \isom \Frac(A)^{[2]}$,
we see that $\alpha_{ij}=\beta_{ij}$ for each pair $i,j\ge 0$,
so these coefficients belong to $A_x\cap A_L=A$. So $f\in B_n\subset B$, and $B=B_x \cap B_L$ is proved.

Since $B$ is the intersection of two UFDs, it is a Krull domain.
Since $x$ is prime in $B$ and $B_x$ is a UFD, it follows by Nagata's Criterion that $B$ is a UFD.

Since (by {\it Lemma\,\ref{pc9v987237edgw9s0AAA1}} and {\it Lemma\,\ref{pc9v987237edgw9s0AAA2}}) $\trdeg_k(B/xB) = 2$ and $\trdeg_k(B/yB) = 3$,
the prime ideals $xB$ and $yB$ are nonzero and distinct, so $x,y$ are prime elements of $B$ and $x \nmid y$.
We have $B/\mathfrak{m} = k[x,y,Z_0,Z_1,\dots] / J$ where 
$$
J = (x,y)+ (xZ_{i+1}+y^{s(i+1)-1}Z_i^{s(i+1)}-Z_{i-1})_{i\ge 1} = (x,y,Z_0,Z_1,\dots),
$$
so $B/\mathfrak{m} = k$ and $\mathfrak{m}$ is a maximal ideal of $B$.
\end{proof}

We continue to use the notation $B = k[x,y,z_0,z_1,\dots]$ where $z_i$ is the canonical image of $Z_i$ in~$B$.

\begin{lemma}\label{non-ACCP}
We have $0 \neq z_0\in\bigcap_{n\ge 1}\mathfrak{m}^n$.
Moreover, $B$ is not noetherian.
\end{lemma}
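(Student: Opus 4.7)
The plan is to exploit the defining relations of $B$ in the rewritten form
\[
z_j \;=\; x\, z_{j+2} + y^{s(j+2)-1}\, z_{j+1}^{\,s(j+2)} \qquad (j\ge 0)
\]
to bootstrap the $\mathfrak{m}$-adic order of every $z_j$, then to deduce non-noetherianity by Krull's intersection theorem.

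First I would observe that $B/\mathfrak{m}=k$ (already established in \emph{Lemma\,\ref{dA09fij9283gwidw201nbw6}}), so reducing the displayed relations modulo $\mathfrak{m}=(x,y)$ forces $z_{i-1}\equiv 0$ for every $i\ge 1$; hence $z_i\in\mathfrak{m}$ for all $i\ge 0$. I then prove by induction on $n\ge 1$ the stronger uniform statement
\[
z_i\in\mathfrak{m}^{n}\qquad\text{for every } i\ge 0.
\]
The inductive step is a one-line computation: assuming the statement for $n$, in the displayed relation one has $x\,z_{j+2}\in\mathfrak{m}\cdot\mathfrak{m}^{n}=\mathfrak{m}^{n+1}$ and $y^{s(j+2)-1}z_{j+1}^{\,s(j+2)}\in\mathfrak{m}^{s(j+2)-1+n\,s(j+2)}=\mathfrak{m}^{s(j+2)(n+1)-1}$; since the recursion gives $s(m)\ge 2$ for every $m\ge 1$, the exponent $s(j+2)(n+1)-1$ is at least $2(n+1)-1\ge n+1$, so both summands lie in $\mathfrak{m}^{n+1}$ and $z_j\in\mathfrak{m}^{n+1}$. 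In particular $z_0\in\bigcap_{n\ge 1}\mathfrak{m}^{n}$.

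To see $z_0\ne 0$ I would quote the identity $B_x=A_x[z_0,z_1]\cong A_x^{[2]}$ obtained during the proof of \emph{Lemma\,\ref{dA09fij9283gwidw201nbw6}}, in which $z_0$ and $z_1$ are algebraically independent over $A_x$; thus $z_0\ne 0$ in $B_x$, and \emph{a fortiori} in $B$. Combined with the preceding paragraph, this yields $0\ne z_0\in\bigcap_{n\ge 1}\mathfrak{m}^{n}$.

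Finally, the non-noetherianity of $B$ follows at once from Krull's intersection theorem: if $B$ were noetherian then $B_{\mathfrak{m}}$ would be a noetherian local domain, hence $\bigcap_{n\ge 1}(\mathfrak{m} B_{\mathfrak{m}})^{n}=0$; but $z_0/1$ is a nonzero element of $B_{\mathfrak{m}}$ (the domain property comes from $B$ being a UFD by \emph{Lemma\,\ref{dA09fij9283gwidw201nbw6}}) lying in every power $(\mathfrak{m} B_{\mathfrak{m}})^{n}$, a contradiction. I do not anticipate a genuine obstacle here: the fast-growing exponents $s(j+2)$ make the induction step trivially slack, and the nonvanishing of $z_0$ is essentially recorded in the earlier lemma; the only point meriting a sentence of verification is $s(m)\ge 2$ for every $m\ge 1$, which is immediate from the defining recursion.
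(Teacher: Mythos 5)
Your proposal is correct and follows essentially the same route as the paper: both bootstrap the $\mathfrak{m}$-adic order of $z_0$ from the relations $z_{i-1}=xz_{i+1}+y^{s(i+1)-1}z_i^{s(i+1)}$ (the paper shows $z_0\in\sum_{j\ge1}\mathfrak{m}^n z_j$ by induction on $n$, while you show the equivalent uniform statement $z_i\in\mathfrak{m}^n$ for all $i$), and both conclude non-noetherianity via the Krull intersection theorem. The only cosmetic difference is that you localize at $\mathfrak{m}$ before invoking Krull intersection and certify $z_0\neq 0$ via $B_x=A_x[z_0,z_1]$ rather than via $\Frac(B)=k^{(4)}$; both are fine.
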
 

\begin{proof}
For each $i\ge1$, we have $xz_{i+1}+y^{s(i+1)-1}z_i^{s(i+1)}-z_{i-1}=0$ and hence $z_{i-1} \in\mathfrak{m} z_{i} + \mathfrak{m} z_{i+1}$.
So $z_0 \in  \sum_{j\ge1} \mathfrak{m} z_j$ and, for each $n\ge1$: 
\[
\sum_{j\ge1} \mathfrak{m}^n z_j \subset \sum_{j\ge1} \mathfrak{m}^n (\mathfrak{m} z_{j+1} + \mathfrak{m} z_{j+2}) \subset \sum_{j\ge1} \mathfrak{m}^{n+1} z_j
\]
It follows that $z_0 \in \sum_{j\ge1} \mathfrak{m}^n z_j$ for all $n\ge1$, so $z_0 \in \bigcap_{n\ge1}\mathfrak{m}^n$.
Note that $z_0\ne 0$, because ${\rm frac}(B)=k(x,y,z_0,z_1)\cong k^{(4)}$. 
Since $\mathfrak{m}$ is a proper ideal of the domain $B$, the fact that $\bigcap_{n\ge 1}\mathfrak{m}^n \neq (0)$ implies that
$B$ is non-noetherian, by the Krull Intersection Theorem (\cite{Eisenbud.95}, Corollary 5.4). 
\end{proof}

\begin{lemma}  \label {o9c8f8721w3w3q3wqaszyq}
Let $B'=B[T]/(xT-y)$, where $B[T]\cong B^{[1]}$. Then $B'\cong B[y/x]\subset {\rm frac}(B)$ and $B'$ does not satisfy the ascending chain condition on principal ideals.
In particular, $B'$ is not a UFD.
\end{lemma}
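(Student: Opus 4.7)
The plan is to establish the three assertions of the lemma in turn. First, to get $B'\cong B[y/x]$, I would apply {\it Theorem~\ref{Samuel-lemma}(a)} with $a=x$ and $b=y$: by {\it Lemma~\ref{dA09fij9283gwidw201nbw6}}, $x$ and $y$ are non-associate prime elements of the UFD $B$, hence relatively prime, so {\it Theorem~\ref{Samuel-lemma}(a)} identifies $B'$ with the subring $B[y/x]$ of $\Frac(B)$; in particular $B\hookrightarrow B'$ and $B'$ is a domain.

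For the failure of ACCP, the key observation is that in $B'$ we have $y = xT$, so $yB' \subset xB'$ and hence $\mathfrak{m} B'= xB' + yB' = xB'$; consequently $\mathfrak{m}^n B' = x^n B'$ for every $n \geq 1$. By {\it Lemma~\ref{non-ACCP}}, $0 \neq z_0 \in \bigcap_n \mathfrak{m}^n$, and since $B \hookrightarrow B'$, $z_0$ remains nonzero in $B'$ and belongs to $\bigcap_n x^n B'$. Thus for each $n\ge 0$ we may write $z_0 = x^n a_n$ with $a_n \in B'\setminus\{0\}$, and the relation $a_n = x a_{n+1}$ yields an ascending chain $(a_0)\subseteq (a_1)\subseteq (a_2)\subseteq \cdots$ of principal ideals in $B'$.

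The main obstacle is to show this chain does not stabilize, which amounts to showing that $x$ is not a unit in $B'$. I would settle this by the direct computation
\[
B'/xB' = B[T]/(x,\, xT-y) = B[T]/(x,y) = (B/\mathfrak{m})[T] \cong k^{[1]},
\]
where I use that $(x,xT-y)=(x,y)$ in $B[T]$ together with $B/\mathfrak{m}\cong k$ from {\it Lemma~\ref{dA09fij9283gwidw201nbw6}}. Hence $xB'$ is a proper ideal of $B'$, so $x \notin (B')^{\ast}$. If the chain were to stabilize, say $(a_n)=(a_{n+1})$, then $a_{n+1}=c\,a_n$ for some $c\in B'$, giving $a_n = xc\,a_n$; since $a_n\neq 0$ and $B'$ is a domain, this would force $xc=1$, contradicting $x \notin (B')^{\ast}$. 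Therefore the chain is strictly ascending and $B'$ fails ACCP. The last assertion then follows immediately from the standard fact that every UFD satisfies ACCP.
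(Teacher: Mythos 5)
Your proposal is correct and follows essentially the same route as the paper: Samuel's criterion (Theorem~\ref{Samuel-lemma}) identifies $B'$ with $B[y/x]$, the relation $y=xT$ converts $z_0\in\bigcap_n\mathfrak{m}^n$ from Lemma~\ref{non-ACCP} into $0\neq z_0\in\bigcap_n x^nB'$, and the resulting strictly ascending chain of principal ideals kills ACCP. Your explicit computation $B'/xB'\cong(B/\mathfrak{m})[T]\cong k^{[1]}$ is a clean (and slightly more detailed) way of justifying the paper's terse assertion that $x\notin(B')^{*}$.
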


\begin{proof} By {\it Lemma\,\ref{dA09fij9283gwidw201nbw6}}, $B$ is a UFD and $x,y\in B$ are nonzero relatively prime elements. By Samuel's Criterion 
({\it Theorem\,\ref{Samuel-lemma}}), $B'$ is an integral domain isomorphic to $B[y/x]$. 
Since $y\in xB'$, {\it Lemma\,\ref{non-ACCP}} shows that:
\[
0 \neq z_0\in\bigcap_{n\ge 0} x^nB' .
\]
Using that $\mathfrak{m}$ is a proper ideal of $B$ and that $x,y$ are relatively prime in $B$, we find that $x$ is not a unit of $B'$.
Consequently, $B'$ does not satisfy the ascending chain condition on principal ideals, so $B'$ is not a UFD. 
\end{proof}

\medskip

\noindent {\bf Acknowledgments.} The work of the third author was supported by JSPS Overseas Challenge Program for Young Researchers (No. 201880243) and JSPS KAKENHI Grant Numbers JP18J10420 and JP20K22317.
The authors gratefully acknowledge the helpful advice of S. Bhatwadekar and Xiasong Sun in preparing this article. The third author wishes to express his gratitude to members of the Department of Mathematics at Western Michigan University, which he visited for periods in 2018 and 2019. 



\begin{thebibliography}{10}

\bibitem{Bourbaki.72}
N.~Bourbaki, \emph{Commutative {A}lgebra}, Elements of Math., Addison-Wesley,
  1972, Reading, MA.

\bibitem{Cohn.77}
P.~Cohn, \emph{Algebra, {V}ol. 2}, John Wiley and Sons, 1977, Chichester, New
  York.

\bibitem{Daigle.Freudenburg.99}
D.~Daigle and G.~Freudenburg, \emph{A counterexample to {H}ilbert's
  {F}ourteenth {P}roblem in dimension five}, J. Algebra \textbf{221} (1999),
  528--535.

\bibitem{Daigle.Freudenburg.01a}
\bysame, \emph{A note on triangular derivations of {$k[X_1,X_2,X_3,X_4]$}},
  Proc. Amer. Math. Soc. \textbf{129} (2001), 657--662.

\bibitem{David.73}
J.~David, \emph{A characteristic zero non-noetherian factorial ring of
  dimension three}, Trans. Amer. Math. Soc. \textbf{180} (1973), 315--325.

\bibitem{Deveney.Finston.94a}
J.~Deveney and D.~Finston, \emph{Fields of {${\mathbb G}_a$} invariants are
  ruled}, Canad.\ Math.\ Bull. \textbf{37} (1994), 37--41.

\bibitem{Eisenbud.95}
D.~Eisenbud, \emph{Commutative {A}lgebra with a {V}iew {T}oward {A}lgebraic
  {G}eometry}, Graduate {T}exts in {M}athematics, vol. 150, Springer-{V}erlag,
  1995.

\bibitem{Fossum.73}
R.~Fossum, \emph{The {D}ivisor {C}lass {G}roup of a {K}rull {D}omain},
  Ergebnisse der Mathematik und ihrer Grenzgebiete, vol.~74, Springer-Verlag,
  Berlin, Heidelberg, New York, 1973.

\bibitem{Freudenburg.17}
G.~Freudenburg, \emph{Algebraic {T}heory of {L}ocally {N}ilpotent
  {D}erivations}, second ed., Encyclopaedia of Mathematical Sciences, vol. 136,
  Springer-Verlag, Berlin, Heidelberg, New York, 2017.

\bibitem{Freudenburg.Kuroda.17}
G.~Freudenburg and S.~Kuroda, \emph{Cable algebras and rings of {${\mathbb
  G}_a$}-invariants}, Kyoto J. Math. \textbf{57} (2017), 325--363.

\bibitem{Hausen.Herppich.Suss.11}
J.~Hausen, E.~Herppich, and H.~S\"uss, \emph{Multigraded factorial rings and
  {F}ano varieties with torus actions}, Doc. Math. \textbf{16} (2011), 71--109.

\bibitem{Ishida.77}
M.~Ishida, \emph{Graded factorial rings of dimension 3 of a restricted type},
  J. Math. Kyoto Univ. \textbf{17} (1977), 441--456.

\bibitem{Kaplansky.74}
I.~Kaplansky, \emph{Commutative {R}ings ({R}evised {E}dition)}, University of
  Chicago Press (Chicago), 1974.

\bibitem{Kemper.11}
G.~Kemper, \emph{A {C}ourse in {C}ommutative {A}lgebra}, Graduate Texts in
  Mathematics, vol. 256, Springer Verlag, 2011.

\bibitem{Lang.93}
S.~Lang, \emph{Algebra ({T}hird {E}dition)}, Addison Wesley, 1993.

\bibitem{Matsumura.80}
H.~Matsumura, \emph{{C}ommutative {A}lgebra ({S}econd {E}dition)}, Math.
  Lecture Note Series, vol.~56, Benjamin/Cummings Publishing (Reading, Mass.),
  1980.

\bibitem{Matsumura.86}
\bysame, \emph{{C}ommutative {R}ing {T}heory}, Cambridge Studies in Adv. Math.,
  vol.~8, Cambridge University Press, 1986.

\bibitem{Miyanishi.85}
M.~Miyanishi, \emph{Normal affine subalgebras of a polynomial ring}, Algebraic
  and {T}opological {T}heories---to the memory of Dr.~Takehiko {M}iyata
  (Tokyo), Kinokuniya, 1985, pp.~37--51.

\bibitem{Mori.77}
S.~Mori, \emph{Graded factorial domains}, Japan J. Math. \textbf{3} (1977),
  224--238.

\bibitem{Nagata.57}
M.~Nagata, \emph{A remark on the unique factorization theorem}, J. Math. Soc.
  Japan \textbf{9} (1957), 143--145.

\bibitem{Samuel.64}
P.~Samuel, \emph{Lectures on unique factorization domains}, Tata Institute of
  Fundamental Research Lectures on Mathematics, vol.~30, Tata Institute of
  Fundamental Research, Bombay, 1964.

\bibitem{Winkelmann.03}
J.~Winkelmann, \emph{Invariant rings and quasiaffine quotients}, Math. Z.
  \textbf{244} (2003), 163--174.

\end{thebibliography}

\vspace{.2in}

\noindent \address{Department of Mathematics. and Statistics\\
University of Ottawa\\
Ottawa, Ontario K1N 6N5}\\
Canada\\
\email{ddaigle@uottawa.ca}
\bigskip

\noindent \address{Department of Mathematics\\
Western Michigan University\\
Kalamazoo, Michigan 49008}\\
USA\\
\email{gene.freudenburg@wmich.edu}
\bigskip

\noindent\address{National Institute of Technology (KOSEN)\\
Oyama College\\
771 Nakakuki, Oyama, Tochigi 323-0806\\
Japan\\
\email{t.nagamine14@oyama-ct.ac.jp}

\end{document}